\newcommand{\B}{\mathcal{B}}
\newcommand{\F}{\mathcal{F}}
\newcommand{\cL}{\mathcal{L}}
\newcommand{\M}{\mathcal{M}}
\newcommand{\K}{\mathcal{K}}
\newcommand{\cS}{\mathcal{S}}
\newcommand{\cH}{\mathcal{H}}
\newcommand{\cN}{\mathcal{N}}
\newcommand{\T}{\mathcal{T}}
\newcommand{\Te}{\Theta}
\newcommand{\sT}{\mathscr{T}}
\newcommand{\sL}{\mathscr{S}}
\newcommand{\N}{\mathbb{N}}
\newcommand{\R}{\mathbb{R}}
\newcommand{\Z}{\mathbb{Z}}
\newcommand{\al}{\alpha}
\newcommand{\ga}{\gamma}
\newcommand{\de}{\delta}
\newcommand{\e}{\varepsilon}
\newcommand{\fy}{\varphi}
\newcommand{\la}{\lambda}
\newcommand{\La}{\Lambda}
\newcommand{\te}{\theta}
\newcommand{\s}{\sigma}
\newcommand{\ta}{\tau}
\newcommand{\ka}{\kappa}
\newcommand{\x}{\xi}
\newcommand{\y}{\eta}
\newcommand{\z}{\zeta}
\newcommand{\ro}{\rho}
\newcommand{\De}{\Delta}
\newcommand{\Ga}{\Gamma}
\newcommand{\p}{\partial}
\newcommand{\na}{\nabla}
\newcommand{\re}{\mathop{\mathrm{Re}}}
\newcommand{\supp}{\operatorname{supp}}
\newcommand{\Cu}{\bigcup}
\newcommand{\lec}{\lesssim}
\newcommand{\gec}{\gtrsim}
\newcommand{\I}{\infty}
\newcommand{\da}{\dagger}
\newcommand{\ti}{\widetilde}
\newcommand{\U}{\underline}
\newcommand{\LR}[1]{{\langle #1 \rangle}}
\newcommand{\EQ}[1]{\begin{equation}\begin{split} #1 \end{split}\end{equation}}
\newcommand{\EQN}[1]{\begin{equation*}\begin{split} #1 \end{split}\end{equation*}}
\newcommand{\CAS}[1]{\begin{cases} #1 \end{cases}}
\newcommand{\mat}[1]{\begin{pmatrix} #1 \end{pmatrix}}
\newcommand{\pt}{&}
\newcommand{\pr}{\\ &}
\newcommand{\pq}{\quad}
\newcommand{\pn}{}
\newcommand{\prq}{\\ &\quad}
\numberwithin{equation}{section}
\newtheorem{thm}{Theorem}[section]
\newtheorem{cor}[thm]{Corollary}
\newtheorem{lem}[thm]{Lemma}
\newtheorem{prop}[thm]{Proposition}
\theoremstyle{remark}
\newtheorem{rem}{Remark}[section]
\newcommand{\Soliton}{\operatorname{S}_{\operatorname{oliton}}}
\newcommand{\Static}{\operatorname{S}_{\operatorname{tatic}}}
\newcommand{\Sol}{\operatorname{S}_{\operatorname{olution}}}
\newcommand{\diff}[1]{{\triangleleft #1}}
\newcommand{\pa}{\triangleright}
\newcommand{\pot}{\mathcal{W}}
\newcommand{\sign}{\operatorname{sign}}
\newcommand{\xt}{^{\operatorname{\bf x}}}
\newcommand{\dt}{^{\operatorname{\bf d}}}
\newcommand{\dist}{\operatorname{dist}}
\newcommand{\St}{{\operatorname{St}}}
\newcommand{\Ra}{\mathscr{R}}
\newcommand{\dR}{d_{\Ra}}
\newcommand{\ton}{\text{ on }}
\newcommand{\cmpl}{\complement}
\newcommand{\ra}{\varsigma}
\newcommand{\cHR}[1]{\cH\downharpoonright#1}
\newcommand{\tHR}[1]{{\cH\!\downharpoonright\!#1}}
\newcommand{\dom}{\mathscr{D}}
\newcommand{\ck}{\check}
\newcommand{\Lim}{\lim\limits}
\newcommand{\Limsup}{\varlimsup\limits}
\newcommand{\Liminf}{\varliminf\limits}
\newcommand{\sB}{\mathscr{B}}
\newcommand{\ig}{\iota}
\begin{document}

\title[Center-stable manifold for the critical wave equation]{Center-stable manifold of the ground state in the energy space for the critical wave equation}

\author{Joachim Krieger}
\address{B\^{a}timent des Math\'ematiques, EPFL, Station 8, CH-1015 Lausanne, Switzerland}
\thanks{Support by the Swiss National Science foundation for the first author is acknowledged.}

\author{Kenji Nakanishi} 
\address{Department of Mathematics, Kyoto University, Kyoto 606-8502, Japan}

\author{Wilhelm Schlag}
\address{Department of Mathematics, The University of Chicago, 5734 South University Avenue, Chicago, IL 60615, U.S.A.}
\thanks{Support of the National Science Foundation   DMS-0617854, DMS-1160817 for the third author is gratefully acknowledged.}

\subjclass[2010]{35L70, 35B40}

\keywords{nonlinear wave equation, Sobolev critical exponent, Hamiltonian, blowup, scattering, ground state, stability, invariant manifold}

\begin{abstract}
We construct a center-stable manifold of the ground state solitons in the energy space for the critical wave equation without imposing any symmetry, as the dynamical threshold between scattering and blow-up, and also as a collection of solutions which stay close to the ground states. Up to energy slightly above the ground state, this completes the 9-set classification of the global dynamics in our previous paper \cite{CNW-nonrad}. We can also extend the manifold to arbitrary energy size by adding large radiation. The manifold contains all the solutions scattering to the ground state solitons, and also some of those blowing up in finite time by concentration of the ground states. 
\end{abstract}

\maketitle 

\tableofcontents

\section{Introduction}
We study global dynamics of the critical wave equation (CW) 
\EQ{ \label{eqCW}
 \pt \ddot u - \De u = f'(u) := |u|^{2^*-2}u, \pq 2^*:=\frac{2d}{d-2}, 
 \pr u(t,x):I\times\R^d\to\R,\pq I\subset\R, \pq d= 3 \ or \  5,}
in the energy space\footnote{The exclusion of $d=4$ is by the same reason as in \cite{CNW-nonrad}, namely to preclude type-II blow-up in the scattering region by \cite{DKM2}. The argument in this paper or \cite{CNW-nonrad} is not sensitive to the dimension.}
\EQ{ \label{def H}
 \vec u(t):=\mat{ u(t) \\ \dot u(t)} \in \cH:=\dot H^1(\R^d)\times L^2(\R^d).}
Henceforth, the arrow on a function $\vec u(t)$ indicates the vector $\vec u(t)=(u(t),\dot u(t))$ given by a scalar function $u(t)$. We do not distinguish column and row vectors. 
The above equation (CW) is in the Hamiltonian form
\EQ{ \label{eq Ham}
 \vec u_t = JE'(\vec u),\pq J:=\mat{0 & 1 \\ -1 & 0},}
and so the energy or the Hamiltonian
\EQ{ \label{def En}
 \pt E(\vec u(t)) := \int_{\R^d} \frac{|\dot u|^2+|\na u|^2}{2}-f(u)dx, \pq f(u):=\frac{|u|^{2^*}}{2^*},}
is conserved. Another important conserved quantity is the total momentum
\EQ{ \label{def P}
 P(\vec u(t)):=\int_{\R^d} \dot u \na u dx.}
The Nehari functional
\EQ{
 K(u):=\int_{\R^d}|\na u|^2-|u|^{2^*}dx}
plays a crucial role in the variational argument. 
(CW) is invariant under translation, the Lorentz transform, and the scaling 
\EQ{
 u(t,x) \mapsto u_\la(t,x):=\la^{d/2-1}u(\la t,\la x),}
which also preserves the energy $E(u_\la)=E(u)$, making (CW) special and critical. It also gives rise to the {\bf ground state} solutions in the explicit form
\EQ{ \label{def W}
 \pt W_\la(x):=\la^{d/2-1}W(\la x), \pq W(x):=\left[1+\frac{|x|^2}{d(d-2)}\right]^{1-d/2}\in\dot H^1(\R^d),
 \pr \forall\la>0, \pq -\De W_\la+f'(W_\la)=0,}
which has the minimal energy among all the stationary solutions. The scale and translation invariance of (CW) generates a family of ground states as a smooth manifold in $\cH$ with dimension $1+d$: 
\EQ{ \label{def Static}
 \pt W_{\la,c}(x):=\la^{d/2-1}W(\la(x-c)) \implies  -\De W_{\la,c}+f'(W_{\la,c})=0,
 \pr \Static(W):=\{\vec W_{\la,c}\in\cH \mid \la>0,\ c\in\R^d\}.}
Then the Lorentz invariance generates a family of solitons on a smooth manifold in $\cH$ with dimension $1+2d$: 
\EQ{ \label{def Soliton}
 \pt u(t,x)=W_{\la,c,p}(t,x):=W_{\la,c+pt}(x+(\LR{p}-1)p|p|^{-2}p\cdot x) \implies (CW).
 \pr \Soliton(W):=\{\vec W_{\la,c,p}(0)\in\cH \mid \la>0,\ c\in\R^d,\ p\in\R^d\}.}

Other types of solutions are the {\bf scattering} (to $0$) solutions with the property
\EQ{
 \exists \fy\in\cH,\ \|\vec u(t)-U(t)\fy\|_\cH \to 0 \pq(t\to\I)}
where $U(t)$ denotes the free propagator, defined as the Fourier multiplier
\EQ{ \label{def U}
 U(t):=\mat{\cos(t|\na|) & |\na|^{-1}\sin(t|\na|) \\ -|\na|\sin(t|\na|) & \cos(t |\na|)}, \pq |\na|:=\sqrt{-\De},} 
the norm blow-up (called {\bf type-I blow-up} in \cite{DKM4})
\EQ{
 \limsup_{t\nearrow t^*}\|\vec u(t)\|_\cH=\I,}
and the more subtle {\bf type-II blow-up}, for which $\|\vec u(t)\|_\cH$ is bounded but $\vec u(t)$ fails to be strongly continuous in $\cH$ beyond some $t<\I$. 

In \cite{CNW-nonrad}, the authors gave a partial classification of dynamics of (CW) in the region 
\EQ{ \label{energy region}
 E(\vec u)<\sqrt{(E(W)+\e^2)^2+|P(\vec u)|^2}}
for a small $\e>0$, which is, by the Lorentz invariance, reduced to the region
\EQ{
 E(\vec u)<E(W)+\e^2.}
It was proved that if $u\in C([0,T_+);\cH)$ is a strong solution up to the maximal existence time $T_+\in(0,\I]$, which does not stay close to the ground state solitons near $t=T_+$, then $u$ either blows up away from the ground state solitons, or it scatters (to $0$) as $t\to\I$. We have the same for $t<0$, and moreover, the $2\times 2$ combinations of scattering and blow-up in $t>0$ and in $t<0$ respectively are realized by initial-data sets in $\cH$ which have non-empty interior. The key ingredient for proof is the existence of a small neighborhood of the ground states such that any solution exiting from it can never come back again, called the {\bf one-pass theorem}. 

A missing piece in the above result of \cite{CNW-nonrad} is the global dynamics around the ground states, compared with the corresponding results for the subcritical Klein-Gordon equation \cite{NLKG-nonrad} and for the Schr\"odinger equation in the radial symmetry \cite{NLS}, where we have $3\times 3$ complete classification of \eqref{energy region} including the {\bf scattering to the ground states} on a center-stable manifold of codimension $1$. 

On the other hand, there have been many papers \cite{KS,KST,DM,HR,DK,Beceanu,DHKS} for (CW) constructing various types of solutions around the ground states, including center-stable manifolds in some stronger topology than the energy space, on which the solutions scatter to the ground states, type-II blow-up at prescribed power law rate or with eternal oscillations between such rates, type-II blow-up at time infinity. The latter phenomena clearly distinguish (CW) from the dynamics of the subcritical equation. 

In this paper, we construct a smooth center-stable manifold of codimension $1$ in the energy space, which embraces all the solutions scattering to, or staying close to the ground state solitons. Indeed the last property is the defining characterization of the manifold. Plugging it into the result in \cite{CNW-nonrad}, we complete the $3\times 3$ classification for (CW) in the region \eqref{energy region}, which is now described. 

Denote $\cH$-distance to the ground states by 
\EQ{ \label{def distW}
 \dist_W(\fy):=\inf\{\|\vec u(t)-\psi\|_\cH\mid \text{$\psi$ or $-\psi$}\in\Static(W)\},}
and the time inversion for any initial data $\fy\in\cH$ and any initial data set $A\subset\cH$ by  
\EQ{ \label{def da}
 \fy^\da:=(\fy_1,-\fy_2), \pq A^\da:=\{\fy^\da\mid \fy\in A\}.}
\begin{thm} \label{main thm1}
There exist positive constants $\e<\de<1<C$, and an unbounded connected $C^1$ manifold $\M\subset\cH$ with codimension $1$ satisfying the following. $\Static(W)\\\subset\M$ is tangent to the center-stable subspace of the linearized equation at each point of $\Static(W)$. $\M$ is invariant by the flow, translation, rotation and the $\cH$-invariant scaling. Let $u$ be any solution with $E(\vec u(0))<E(W)+\e^2$, and let $T\in(0,\I]$ be its maximal existence time. Then we have only one of the following (1)--(3). 
\begin{enumerate}
\item $\vec u(0)\not\in\pm\M$, $T=\I$ and $\Lim_{t\to\I}\|\vec u(t)-\vec v(t)\|_\cH\to 0$ for some free solution $v$. 
\item $\vec u(0)\not\in\pm\M$, $T<\I$ and $\Liminf_{t\to T}\dist_W(\vec u(t))>\de>C\e$. 
\item $\vec u(0)\in\pm\M$ and $\Limsup_{t\to T}\dist_W(\vec u(t))\le C\sqrt{E(\vec u)-E(W)}$.
\end{enumerate}
Let $A_1,A_2,A_3$ be the corresponding sets of the initial data $\vec u(0)$. Then $A_1\cap A_1^\da$ is a non-empty open set. $A_1\cap A_2^\da$, $A_2\cap A_1^\da$ and $A_2\cap A_2^\da$ have non-empty interior. $A_1$ and $A_2$ have non-empty interior in $\pm\M^\da$ in the relative topology. $\M\cap\M^\da\subset\cH$ is a  connected $C^1$ manifold with codimension $2$. $\M\cap-\M=\emptyset=\M^\da\cap-\M$. 
\end{thm}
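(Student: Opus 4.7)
The plan is to \emph{define} the manifold $\M$ intrinsically as the set of initial data whose forward trajectory is trapped near the ground-state family, and then to verify this set has the structure claimed. Let
$$\M_0 := \{\fy \in \cH : E(\fy) < E(W)+\e^2,\ \Limsup_{t \nearrow T_+(\fy)} \dist_W(\vec u(t)) < \de\},$$
and take $\M$ to be the union of $\M_0$ with its orbits under spatial translation, rotation, and the $\cH$-invariant scaling. Once $\M$ is shown to be an embedded $C^1$ codimension-$1$ manifold containing $\Static(W)$, option~(3) of the trichotomy becomes essentially the definition of forward trapping, and the tight bound $\dist_W(\vec u(t)) \lec \sqrt{E(\vec u)-E(W)}$ is the standard coercivity estimate for the energy near its variational minimum on $\Static(W)$.

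For the local construction I would work near a fixed ground state $\vec W$ and introduce modulation parameters $(\la(t),c(t))$ by imposing orthogonality of the remainder $\vec v(t) := \vec u(t) - \vec W_{\la(t),c(t)}$ to the generators $\p_\la \vec W,\ \p_{c_j}\vec W$ of $\Static(W)$. Factoring out this $(d+1)$-dimensional symmetry direction, the linearized flow with generator $J\cL_W$, where $\cL_W := -\De - f''(W)$, splits $\cH$ into a one-dimensional unstable eigenspace with eigenvalue $k>0$, a conjugate one-dimensional stable eigenspace, and a center subspace $E^c$ of purely imaginary spectrum containing the symmetry kernels. The unstable coordinate $\la_+(t)$ of $\vec v(t)$ satisfies $\dot\la_+ = k\la_+ + O(\|\vec v\|_\cH^2)$, so trapping on $[0,T_+)$ forces $\la_+$ to be bounded; a Lyapunov--Perron fixed-point, running the unstable mode backward in time while the stable and center modes are propagated forward by the linearized Duhamel formula, realises $\la_+(0)$ as a $C^1$ function of the center-stable data with vanishing derivative at $\vec v = 0$. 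The resulting graph $\M_{\operatorname{loc}}$ is thus $C^1$ and tangent to $E^{cs}$ at $\vec W$; propagating by translation, rotation and scaling produces a $C^1$ graph bundle over $\Static(W)$.

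To pass from $\M_{\operatorname{loc}}$ to the global $\M$ I would invoke the one-pass theorem of \cite{CNW-nonrad}: any solution in the admissible energy range that ever leaves a $\de$-neighborhood of $\pm\Static(W)$ never returns. Thus any forward-trapped trajectory enters $\M_{\operatorname{loc}}$ after finite time, so $\M$ is obtained by pulling $\M_{\operatorname{loc}}$ back by the nonlinear flow. Invariance under the flow and the symmetry group is then automatic. For the trichotomy itself, the one-pass theorem places every $\vec u(0) \notin \pm\M$ permanently in the exit region, where the sign-definite Nehari functional $K$ and the 9-set classification of \cite{CNW-nonrad} force either global scattering (case~(1)) or finite-time blow-up separated from the ground states by distance $\gec \de$ (case~(2)).

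The main obstacle will be $C^1$ regularity of the graph in the critical \emph{energy} topology. Because $f'(u) = |u|^{2^*-2}u$ is only $C^{1,\al}$ with small H\"older exponent (especially in $d=5$, where $2^*-2<1$), the raw Lyapunov--Perron contraction yields only a Lipschitz graph; the upgrade to $C^1$ will demand differentiating the fixed-point equation and controlling the derivative of the nonlinear flow by Strichartz estimates for $\p_t^2 + \cL_W$, for which one must establish dispersive decay on $E^c$ and absence of threshold resonances. Once this is in hand, the remaining structural assertions are organizational: connectedness and unboundedness of $\M$ follow from the corresponding properties of $\Static(W)$ together with the $\cH$-scaling; $\M \cap -\M = \emptyset$ and $\M^\da \cap -\M = \emptyset$ follow from disjointness of $\de$-neighborhoods of $\pm\Static(W)$ for $\de$ small; $\M \cap \M^\da$ is a $C^1$ submanifold of codimension $2$ because forward and backward trapping are transverse codimension-$1$ conditions (they exclude $\la_+(0)$ and $\la_-(0)$, respectively); and the non-empty interior of each $A_i \cap A_j^\da$ reduces to the $2\times 2$ constructions already present in \cite{CNW-nonrad}, now reinterpreted via the classification above.
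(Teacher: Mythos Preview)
Your overall architecture---define the trapped set, show locally it is a graph over the center-stable subspace, then globalize by the flow and the one-pass theorem---matches the paper's. The genuine gap is in the local step. You propose a Lyapunov--Perron contraction driven by Strichartz estimates for $\partial_t^2+\mathcal L_W$, and you explicitly condition this on ``dispersive decay on $E^c$ and absence of threshold resonances.'' But $L_+=-\Delta-f''(W)$ \emph{does} have a threshold resonance in $d\le 4$: the scaling generator $S_{-1}'W$ satisfies $L_+(S_{-1}'W)=0$ with $S_{-1}'W\notin L^2$. This is precisely the obstruction to the linearized dispersive estimates you would need in the energy space, and it is why prior center-stable constructions for this equation worked in stronger topologies or under symmetry. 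Your plan as written does not go through in $d=3$.

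The paper avoids linearized dispersion entirely. Instead of showing that center-stable data evolve nicely, it shows that \emph{off}-manifold data are expelled: the ``ignition lemma'' proves that for any trapped solution $u^0$, an arbitrarily small perturbation with nontrivial unstable component grows exponentially in the rescaled time $\tau$ (with $d\tau/dt=e^{\sigma(t)}$) until the difference reaches a fixed size, the estimate being by energy identities for $\langle\mathcal L\,\triangleleft\gamma^\triangleright\mid\triangleleft\gamma^\triangleright\rangle$ plus locally uniform Strichartz bounds for the \emph{full} nonlinear flow on unit $\tau$-intervals, not for $e^{tJ\mathcal L}$. This yields both existence (the trapped $\lambda_+$ sits between the open blow-up and scattering sets) and uniqueness (two trapped values would ignite each other) of $m_+(\varphi)$, and a H\"older/Lipschitz bound $|m_+(\varphi^1)-m_+(\varphi^2)|\lesssim(\|\varphi^1\|+\|\varphi^2\|)^{1/6}\|\varphi^1-\varphi^2\|$; $C^1$ regularity is then obtained by passing to the limit in the difference quotient and applying the same ignition argument to the \emph{linearized} equation around $u^0$. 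No spectral input beyond the single negative eigenvalue of $L_+$ is used.

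A secondary point: your orthogonality is imposed against $\partial_\lambda\vec W,\ \partial_{c_j}\vec W$, but these are resonances, not $L^2$ eigenfunctions, so that pairing is not available in $\mathcal H$ for $d\le 4$. The paper instead pairs against $(S_0'\rho,\nabla\rho)$, functions built from the $L^2$ ground state $\rho$ of $L_+$; this choice is not preserved by the linearized flow, which produces a benign linear forcing in the modulation equations rather than an ill-defined projection.
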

The above case (3) also contains blow-up solutions, but they are distinguished from (2) by the asymptotic distance from the ground states. By the characterization of type-II blow-up by Duyckaerts, Kenig and Merle \cite[Theorem 1]{DKM2}, in case (3) with $T<\I$, there are a smooth $(\la,c):[0,T)\to(0,\I)\times\R^d$ and $\fy\in\cH$ such that 
\EQ{
 \la(t)\to\I, \pq \|\vec u(t)-\vec W_{\la(t),c(t)}-\vec \fy\|_\cH\to 0}
as $t\nearrow T$. The gap between $C\e$ and $\de$ is actually huge in the proof. The above theorem except for the existence of $\M$ was essentially proved in \cite{CNW-nonrad}. 

Next we can exploit the Lorentz transform to include all the ground state solitons.
\begin{thm} \label{main thm2}
There exist a small constant $\e>0$, a connected $C^1$ manifold $\M_L\subset\cH$ with codimension $1$, and two open sets $O_1,O_2\subset\cH$ satisfying the following. $\M\cup\Soliton(W)\subset \M_L$. $\Soliton(W)\subset O_1\subset\overline{O_1}\subset O_2$. 
$\M_L$ is invariant by the flow, translation, $\cH$-invariant scaling and the Lorentz transform. 
Let $u$ be any solution with $E(\vec u(0))<\sqrt{|E(W)+\e^2|^2+|P(\vec u(0))|^2}$ and let $T\in(0,\I]$ be its maximal existence time. Then we have only one of the following (1)--(3). 
\begin{enumerate}
\item $\vec u(0)\not\in\pm\M_L$, $T=\I$ and $\Lim_{t\to\I}\|\vec u(t)-\vec v(t)\|_\cH\to 0$ for some free solution $v$. 
\item $\vec u(0)\not\in\pm\M_L$, $T<\I$ and $\vec u(t)\not\in\pm\overline{O_2}$ for all $t$ near $T$. 
\item $\vec u(0)\in\pm\M_L$ and $\vec u(t)\in \pm O_1$ for all $t$ near $T$. 
\end{enumerate}
The initial data sets and $\M_L$ enjoy the same properties as in the previous theorem. 
\end{thm}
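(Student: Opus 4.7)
\emph{Proof proposal.} The strategy is to use the Lorentz invariance of (CW) to generate $\M_L$ as the orbit of $\M$ under boosts, and then to transfer the trichotomy of Theorem~\ref{main thm1} fiberwise via reduction to the rest frame, which is possible because $E^2-|P|^2$ is a Lorentz invariant.

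For each $p\in\R^d$, let $\La_p:\R^{1+d}\to\R^{1+d}$ be the Lorentz boost with spatial momentum $p$, and let $L_p$ denote the induced action on (CW) solutions, $(L_p u)(t,x):=u(\La_p^{-1}(t,x))$; by Lorentz invariance $L_p u$ is again a solution. Restricted to $t=0$ this induces a map, still called $L_p$, on the set $D_p\subset\cH$ of initial data whose solutions exist on the $\La_p$-preimage of $\{t=0\}$. The first task is to verify that $L_p:D_p\to\cH$ is a $C^1$ map with $D_p$ open, using local well-posedness in $\cH$ together with uniform lifespan bounds on the spacetime slab needed to accommodate the boost; this is available for all solutions appearing in Theorem~\ref{main thm1}, either because they are globally defined (cases (1) and (3)) or because they exit in a controlled way before any obstruction. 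By direct computation, $L_p$ sends $\vec W_{\la,c}$ to $\vec W_{\la,c,p}(0)$.

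I then set
\[
 \M_L:=\Cu_{p\in\R^d}L_p(\M\cap D_p),
\]
which contains $\M$ (take $p=0$) and $\Soliton(W)$ (from $\Static(W)\subset\M$). To see $\M_L$ is a $C^1$ codimension-$1$ manifold in the region of interest, I use a rest-frame defining function: on the open set where $E(\fy)^2>|P(\fy)|^2$, the formula $p(\fy):=P(\fy)/\sqrt{E(\fy)^2-|P(\fy)|^2}$ gives the unique boost such that $L_{p(\fy)}^{-1}\fy$ has vanishing momentum, and $p$ depends smoothly on $\fy$. In the energy strip $E^2<(E(W)+\e^2)^2+|P|^2$ one then has $\fy\in\M_L$ iff $L_{p(\fy)}^{-1}\fy\in\M$, a $C^1$ codimension-$1$ condition because $\M$ is such by Theorem~\ref{main thm1} and $\fy\mapsto L_{p(\fy)}^{-1}\fy$ is a $C^1$ diffeomorphism. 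Invariance of $\M_L$ under the flow, translations, rotations, and scaling descends from the same invariances of $\M$, and invariance under Lorentz boosts follows from the Wigner-rotation decomposition of the composition of two boosts together with the rotation invariance of $\M$.

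For the classification, given $u$ with $E(\vec u(0))^2<(E(W)+\e^2)^2+|P(\vec u(0))|^2$, the invariant $\sqrt{E^2-|P|^2}<E(W)+\e^2$ makes $\tilde u:=L_{p(\vec u(0))}^{-1}u$ a (CW) solution with $E(\vec{\tilde u}(0))<E(W)+\e^2$, so Theorem~\ref{main thm1} applies to $\tilde u$. Its three alternatives pull back: scattering is preserved up to a Lorentz-boosted free profile, type-I blow-up is preserved by the $\cH$-continuity of $L_p$, and $\dist_W$-closeness to $\Static(W)$ in the rest frame becomes closeness to $\Soliton(W)$ in the original frame. I define the open sets $O_1\subset\overline{O_1}\subset O_2$ as pullbacks $\{\fy:\dist_W(L_{p(\fy)}^{-1}\fy)<r_i\}$ for two thresholds $0<r_1<r_2<\de$, exploiting the gap between $C\e$ and $\de$ in Theorem~\ref{main thm1} to separate cases (2) and (3). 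The properties of $A_1,A_2,A_3$ and of $\M_L\cap\M_L^\da$ transfer under the $C^1$ diffeomorphism $\fy\mapsto L_{p(\fy)}^{-1}\fy$.

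The main obstacle, I expect, is rigorously setting up $L_p$ as a $C^1$ map on an open subset of $\cH$: the Lorentz-boosted slice $\{t=0\}$ is a non-horizontal hyperplane in the original frame, so evaluating $L_p u$ at $t=0$ requires uniform control of the solution on a spacetime slab of depth growing with $\LR{p}$, uniformly over a neighborhood of initial data. Piecing together the local existence and persistence results of \cite{CNW-nonrad} and Theorem~\ref{main thm1} to furnish the needed slab, and verifying the smoothness, diffeomorphism, and symmetry compatibilities, constitutes the bulk of the technical work; once this pointwise Lorentz action is in hand, the rest is essentially a functorial pushforward of Theorem~\ref{main thm1}.
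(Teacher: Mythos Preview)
Your overall strategy---extend $\M$ by Lorentz boosts and reduce the trichotomy to the rest frame via the invariant $E^2-|P|^2$---is exactly the paper's approach (Section~3.3). The paper defines $\M_2$ as the collection of maximal orbits of Lorentz transforms of solutions on $\M_1$, and proves the $C^1$ diffeomorphism property of $\sL_\te:\vec u(0)\mapsto\vec u^\te(0)$ via the linear energy identity on the tilted slab, which is essentially your plan. So the route is the same; the differences are in what you leave implicit.

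There is one genuine gap. Your rest-frame reduction defines $p(\fy)=P(\fy)/\sqrt{E(\fy)^2-|P(\fy)|^2}$ and proceeds as if $E^2>|P|^2$ always holds in the region $E^2<(E(W)+\e^2)^2+|P|^2$. It does not: the region contains data with $E\le|P|$, for which $p(\fy)$ is undefined and no boost brings you to zero momentum. The paper treats these cases separately: if $E<|P|$ one can boost to negative energy, forcing finite-time blow-up in both directions (Levine/Kenig--Merle); if $E=|P|$ a limiting argument gives $\|u\|_{L^{q_m}}=0$, so $u\equiv 0$. Only for $E>|P|$ does your rest-frame map exist, and there the paper argues as you do. Without this trichotomy on $\sign(E^2-|P|^2)$ your classification is incomplete.

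On the domain issue you flag: the paper resolves it not by uniform slab estimates but by introducing the space-time maximal extension $\dom(u)$ and proving (the lemma preceding Section~3.3) that for blow-up solutions on $\M_0$ the blow-up surface is exactly the cone $t_+(\vec u(0),x)=T+|x-c_*|$, with $\dom(u)$ strictly containing a suitable exterior cone. This is what guarantees that any Lorentz transform centered near $(T,c_*)$ yields a solution with a full time slice, and also that if the transformed solution blows up or scatters then so does the original. Your sketch (``piecing together local existence and persistence'') would need this geometric information about the blow-up surface to go through.
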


The manifolds $\M$ and $\M_L$ are center-stable manifolds of $\Static(W)$ and $\Soliton(W)$ respectively, but they contain solutions blowing up in finite time. The invariance by the flow should be understood that the solutions starting on the manifold stay there as long as they exist, and similarly for the Lorentz transform. Again by \cite{DKM2}, in case (3) with $T<\I$, we have a smooth $(\la,c)$, $p\in\R^d$ and $\fy\in\cH$ such that, as $t\nearrow T$,  
\EQ{
 \la(t)\to\I, \pq \|\vec u(t)-\vec W_{\la(t),c(t),p}-\fy\|_\cH \to 0.}

Our argument to construct the center-stable manifold is somehow similar to the numerical bisection in \cite{S,Bizon}, where the center-stable manifold was searched for as the threshold between scattering and blowup. Indeed, our proof does not touch the delicate dynamics of those solutions on the manifold, but relies on the behavior of those off the manifold. In particular, we do not need any dispersive estimate on the linearized operator as in \cite{KS,NLKG-nonrad,NLS,Beceanu}, which makes our proof much simpler. In this respect, it is similar to \cite{Inv} in the subcritical case. On the other hand, the criticality or the concentration phenomenon forces us to work in the space-time rescaled according to the solution itself. For that part we employ the same argument as in the previous paper \cite{CNW-nonrad}.

The next question is if we can remove the energy restriction \eqref{energy region}. Concerning it, Duyckaerts, Kenig and Merle \cite{DKM4} recently established an outstanding result of {\it asymptotic soliton resolution} for $d=3$: Every solution with radial symmetry is either type-I blow-up, or decomposes into a sum of ground states with time-dependent scaling and a free solution
\EQ{ \label{DKM dcp}
 \exists N\ge 0,\ \exists \la_j(t),\ \exists\fy\in\cH, \pq \lim_{t\nearrow T}\|\vec u(t) - \sum_{j=1}^N \vec W_{\la_j(t),0} - U(t)\fy\|_\cH=0,}
where $T$ is the maximal existence time of $u$. 
Given this expansion, one might expect that the dividing manifold of dynamics could be extended as the collection of all such solutions with $N>0$. 
However, it is very hard to prove such a statement even if we know the above asymptotics, because of the instability of the ground state. Moreover, one can easily observe that the above naive guess is not correct when $T<\I$ and the energy is larger, as one can construct such blow-up solutions in the deep interior of blow-up solutions, by using finite speed of propagation (see Appendix \ref{sect:bup int}).

Instead of pursuing that approach, we extend our center-stable manifold globally by adding large radiation, thereby including at least all solutions \eqref{DKM dcp} with $N=1$ and $T=\I$, as well as some of them with $T<\I$. 
A simple procedure is proposed to reduce the analysis to the previous case $E<E(W)+\e^2$ by {\it detaching large radiation}, which relies on the asymptotic Huygens principle, valid for all $d\in\N$ and without radial symmetry. The extended manifold splits the energy space into the scattering and blow-up regions locally around itself, although the entire dynamical picture is still far beyond our analysis. 
\begin{thm} \label{main thm3}
There exist a connected $C^1$ manifold $\M_D\subset\cH$ with codimension $1$ and two open sets $O_3,O_4\subset\cH$ satisfying the following. $\M_L\subset\M_D$. $\Soliton(W)\subset O_3\subset \overline{O_3}\subset O_4$. $\M_D$ is invariant by the flow, translation, $\cH$-invariant scaling and the Lorentz transform. Let $u$ be any solution with $\vec u(0)\in O_3$. Then we have only one of the following (1)--(3). 
\begin{enumerate}
\item $\vec u(0)\not\in\pm\M_D$, $T=\I$ and $\Lim_{t\to\I}\|\vec u(t)-\vec v(t)\|_\cH=0$ for some free solution $v$.
\item $\vec u(0)\not\in\pm\M_D$, $T<\I$ and $\vec u(t)\not\in\pm\overline{O_4}$ for $t$ near $T$.
\item $\vec u(0)\in\pm\M_D$ and $\vec u(t)\in\pm O_3$ for all $t$ near $T$. 
\end{enumerate}
If $u\in C([0,\I);\cH)$ is a solution satisfying
\EQ{ \label{scat2soliton}
 \Lim_{t\to\I}\|\vec u(t)-\vec W_{\la(t),c(t),p(t)}-\vec v(t)\|_\cH=0,}
for some $(\la,c,p):[0,\I)\to(0,\I)\times\R^{1+2d}$ and a free solution $v$, then $\vec u(0)\in\M_D$. 
\end{thm}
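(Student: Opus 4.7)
My plan is to reduce the analysis inside a tubular neighborhood $O_3$ of $\Soliton(W)$ to the regime already handled by Theorem \ref{main thm2}, via a \emph{radiation detachment} procedure. I would choose $O_3$ as a small tubular neighborhood of $\Soliton(W)$ in $\cH$ such that every $\fy\in O_3$ admits a unique nearest soliton $\vec W_{\la(\fy),c(\fy),p(\fy)}(0)$ selected by orthogonality against the tangent space of $\Soliton(W)$. With a smooth cutoff $\chi=\chi_{\la,c,\fy}$ equal to $1$ on a ball around $c$ of radius $R/\la$, with $R=R(\fy)$ chosen large enough that the exterior tail of $\fy-\vec W_{\la,c,p}(0)$ has small $\cH$-norm, I split
\begin{equation*}
 \fy^{(n)}:=\vec W_{\la,c,p}(0)+\chi(\fy-\vec W_{\la,c,p}(0)),\qquad \fy^{(r)}:=(1-\chi)(\fy-\vec W_{\la,c,p}(0)).
\end{equation*}
For $O_3$ small enough the near part satisfies $E(\fy^{(n)})<E(W)+\e^2$ and so falls under Theorem \ref{main thm2}, while the radiation $\fy^{(r)}$ is free-wave data, supported well outside the soliton core but of possibly large $\cH$-norm. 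I then set $\M_D\cap O_3:=\{\fy\in O_3\mid \fy^{(n)}\in\pm\M_L\}$, which is $C^1$ codimension-$1$ because the detachment map is $C^1$ and submersive onto the transverse directions of $\M_L$; for $\fy\in\M_L\cap O_3$ one has $\fy^{(n)}=\fy$ by the choice of $R$, so $\M_L\cap O_3\subset\M_D$. Finally I extend $\M_D$ to all of $\cH$ as the set of data whose forward nonlinear flow enters $\M_D\cap O_3$, which makes flow-invariance tautological and absorbs the rest of $\M_L$ via the concluding clause of Theorem \ref{main thm2}.

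The dynamical step compares $u$ with the pair $(\tilde u,v)$, where $\tilde u$ solves (CW) with data $\fy^{(n)}$ and $v=U(\cdot)\fy^{(r)}$ is the free wave. The remainder $w:=u-\tilde u-v$ solves $\ddot w-\De w=f'(\tilde u+v+w)-f'(\tilde u)$ with zero data. By Theorem \ref{main thm2}, $\tilde u$ is either scattering, or escaping $\pm\overline{O_2}$ in finite time, or trapped in $\pm O_1$, and in all cases it is either spatially localized near the moving soliton or eventually small in critical space-time norms. In parallel, the asymptotic Huygens principle gives $\|\vec v(t)\|_{\cH(|x|<(1-\de)|t|)}\to 0$, so that $\tilde u$ and $v$ become spatially separated for large $t$. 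Combined with finite speed of propagation for short times, a Strichartz bootstrap using the rescaled-local-energy technology of \cite{CNW-nonrad} controls the cross terms $f'(\tilde u+v)-f'(\tilde u)-f'(v)$ and yields $\|w\|_{\Strf}\to 0$ on the maximal existence interval of $\tilde u$. The trichotomy then transfers from $\fy^{(n)}$ to $\fy$: scattering of $\tilde u$ gives scattering of $u=\tilde u+v+o_\cH(1)$; blow-up of $\tilde u$ with exit from $\pm\overline{O_2}$ gives blow-up of $u$ with exit from $\pm\overline{O_4}$, for $O_4$ a suitable fattening of $O_2$ by the $v$-mass; $\tilde u\in\pm\M_L$ trapped in $\pm O_1$ gives $u\in\pm O_3$. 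Invariance of $\M_D$ under translation, $\cH$-scaling and Lorentz is inherited from the corresponding invariances of $\M_L$ and the equivariance of the detachment map.

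For the final assertion, given $u$ satisfying \eqref{scat2soliton}, I would apply the detachment to $\vec u(t_0)$ at a large $t_0$. Asymptotic Huygens places $\vec v(t_0)$ essentially in $|x|\ge(1-\de)t_0$, hence away from the soliton trajectory at $c(t_0)+p t_0$, so $\vec v(t_0)$ coincides with $\vec u(t_0)^{(r)}$ up to an error vanishing as $t_0\to\I$. The corresponding $\vec u(t_0)^{(n)}$ then scatters to the soliton under the nonlinear flow, placing it in $\pm\M_L$ by the characterization clause of Theorem \ref{main thm2}; hence $\vec u(t_0)\in\M_D\cap O_3$, and flow-invariance yields $\vec u(0)\in\M_D$. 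The main obstacle throughout is the Strichartz perturbation estimate for $w$ when $\fy^{(r)}$ has arbitrarily large $\cH$-norm: although $\tilde u$ and $v$ separate in physical space, critical cross terms like $|\tilde u|^{2^*-2}v$ and $|v|^{2^*-2}\tilde u$ must be controlled uniformly in $\|v\|_\cH$, which requires a quantitative exterior-energy decay for free waves combined with the soliton-co-moving rescaling of \cite{CNW-nonrad}.
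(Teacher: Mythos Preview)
Your proposal has the right overall shape---detach radiation, reduce the core to Theorem \ref{main thm2}, extend by the flow---but there is a genuine gap in how the detachment is set up, and it breaks precisely the last assertion about solutions scattering to a soliton with large radiation.

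You take $O_3$ to be a small $\cH$-tubular neighborhood of $\Soliton(W)$ and define the cutoff radius $R(\fy)$ so that the exterior tail of $\fy-\vec W_{\la,c,p}(0)$ is small in $\cH$. These two choices are incompatible with allowing $\fy^{(r)}$ to have ``possibly large $\cH$-norm,'' as you also write. If $\fy\in O_3$ is $\cH$-close to a soliton, then $\fy^{(r)}$ is automatically small; conversely, if $\fy=\vec W_{\la,c,p}(0)+\vec v(t_0)$ with $\|\vec v\|_\cH$ large, then $\fy$ is not in your $O_3$, and no choice of $R$ makes the exterior small in $\cH$ while keeping $E(\fy^{(n)})<E(W)+\e^2$. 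Consequently your flow-extended $\M_D$ only contains solutions whose forward orbit comes $\cH$-close to $\Soliton(W)$; a solution satisfying \eqref{scat2soliton} with $\|\vec v\|_\cH$ large never does, so the final implication $\vec u(0)\in\M_D$ fails. Your last paragraph tacitly assumes the detachment applies at $\vec u(t_0)$ with large radiation, but that point lies outside the domain on which you defined the map.

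The paper fixes this by measuring the exterior piece not in $\cH$ but in an \emph{out-going} seminorm $\Ra_B^\I$: the free evolution of the exterior data must have small energy on the interior cone $B_{+t}$ and small global Strichartz norm, with no bound on its $\cH$-size. The asymptotic Huygens principle (Lemma \ref{lem:Huy}) then guarantees $\|U(t)\fy\|_{\Ra_B^\I}\to 0$ for every $\fy\in\cH$, so at large $t_0$ the detaching Lemma \ref{lem:detach} applies to $\vec u(t_0)$ regardless of the size of $\vec v$. This also dissolves your ``main obstacle'': the detached solution $u\dt$ equals $u$ \emph{exactly} on the interior light cone $B_{+t}$ by finite propagation speed, and the exterior solution $u\xt$ is globally small in Strichartz, so there is no need to bootstrap cross terms of the type $|\tilde u|^{2^*-2}v$ at all. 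The manifold $\M_3$ is then defined directly as those data for which detaching lands $u\dt(0)$ on $\M_0$, and it already contains points of arbitrarily large energy before any flow extension.
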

More detailed statements are given in the main body of paper.

This paper is organized as follows. In the rest of this section, we introduce some notation and coordinates, together with a few basic facts and estimates, mostly overlapping with the previous paper \cite{CNW-nonrad}. 

In {\bf Part I} starting with Section 2, we deal with the solutions with energy slightly above the ground state. The center-stable manifold is constructed as a threshold between scattering and blowup, which completes the 9-set classification of dynamics, in a form similar to the subcritical case \cite{NLKG-nonrad}. The main new ingredient is the {\it ignition Lemma} \ref{lem:inst}, which roughly says that for any solution staying close to the ground states, any arbitrarily small perturbation in the unstable direction eventually leads to the ejection from a small neighborhood as in the ejection lemma of \cite{CNW-nonrad}. These are extended by the Lorentz transform in the end of Section 3.

In {\bf Part II} starting with Section 4, we extend the results in Part I to large energy by adding out-going radiation. The extended manifold contains all the solutions scattering to the ground state solitons, while it is still a dynamical threshold between the scattering and the blowup. The main ingredient is the {\it detaching Lemma} \ref{lem:detach}, which allows one to detach out-going radiation energy from a solution to produce another solution with smaller energy but the same behavior. We also extend the one-pass theorem of \cite{CNW-nonrad} by allowing out-going large radiation.

\subsection{Strichartz norms and strong solutions}
For any $I\subset\R$, we use the Strichartz norms for the wave equation with the following exponents 
\EQ{ \label{def St}
 \pt \St_s:=L^{q_s}_t(I;\dot B^{1/2}_{q_s,2}(\R^d)),
 \pq \St_m:=L^{q_m}_{t,x}(I\times \R^d), 
 \pq \St_s^*:=L^{q_s'}_t(I;\dot B^{1/2}_{q_s',2}(\R^d)),
 \pr \St_p:=L^{(d+2)/(d-2)}_t(I;L^{2(d+2)/(d-2)}(\R^d))\pq (d\le 6),
 \pr \St(I):=\St_s(I)\cap\St_m\cap\St_p(I), 
 \pq q_s:=\frac{2(d+1)}{d-1},\ q_m:=\frac{2(d+1)}{d-2},}
where $q':=q/(q-1)$ denotes the H\"older conjugate. 
Slightly abusing the notation, we often apply these norms to the first component of vector functions such as
\EQ{
 \|(v_1,v_2)\|_{\St(I)}:=\|v_1\|_{\St(I)}.}
The small data theory using the Strichartz estimate implies that there is a small $\e_S>0$ such that for any $T>0$ and $\fy\in\cH$ satisfying 
\EQ{ \label{def eS}
 \vec v(t):=U(t)\fy \implies \|v\|_{\St_m(0,T)} \le \e_S,}
there is a unique solution $u$ of (CW) on $[0,T)$ satisfying
\EQ{
 \vec u(0)=\fy,\pq \pt\|\vec u-\vec v\|_{L^\I\cH(0,T)}+\|u-v\|_{\St(0,T)} 
  \pn\lec \|f'(u)\|_{\St_s^*(0,T)} 
  \pr\lec \|u\|_{\St_m(0,T)}^{2^*-2}\|u\|_{\St_s(0,T)} \ll \|u\|_{\St(0,T)} \sim \|v\|_{\St(0,T)},}
which is scattering to $0$ if $T=\I$. The uniqueness holds in
\EQ{ \label{loc St}
 \{\vec u \in C([0,T);\cH)\mid \forall S\in(0,T),\ u\in \St(0,S)\},} 
and a solution $u$ in this space can be extended beyond $T<\I$ if and only if $\|u\|_{\St_m(0,T)}<\I$. 
For any interval $I\subset\R$, we denote by 
\EQ{ \label{def Sol}
 \Sol(I)} 
the set of all solutions $u$ of (CW) on $I$ such that $\vec u\in C(J;\cH)\cap\St(J)$ for any compact $J\subset I$, and that {\it $u$ can not be extended beyond any open boundary of $I$.}  
For example, $\vec u\in\Sol([0,T))$ with $T<\I$ means that $u$ is a solution in \eqref{loc St} with $\|u\|_{\St(0,T)}=\I$, so that it can be extended to $t<0$ but not to $t>T$, whereas $\vec u\in\Sol([0,T])$ means that $u$ is a solution in $C([0,T];\cH)\cap\St(0,T)$, which can be extended both to $t<0$ and to $t>T$.  
 Hence, if $I$ is an open interval, then $I$ is the maximal life for any $u\in\Sol(I)$. 

\subsection{Symmetry, solitons and linearization}
The groups of space translation and scaling are denoted by 
\EQ{ \label{def TS}
 \pt \T^c \fy(x):= \fy(x-c) \pq (c\in\R^d), 
 \pr \cS^\s(\fy_1,\fy_2)(x):=(e^{\s(d/2-1)}\fy_1(e^\s x),e^{\s d/2}\fy_2(e^\s x))\ (\s\in\R),}
and for any $a\in\R$, $(S_a^\s \fy)(x):=e^{\s(d/2+a)}\fy(e^\s x)$. Their generators are $\T'=-\na$, $\cS'=S_{-1}'\otimes S_0'$, and $S_a'=r\p_r+d/2+a$. 
For any $\vec u\in\Sol(I)$ and $(\s,c)\in\R^{1+d}$, 
\EQ{
 \T^c\cS^\s \vec u(e^\s t) \in \Sol(e^{-\s}I).}

The linearization around the ground state $W$ is written by the operator
\EQ{ \label{def L+}
 L_+ := -\De - f''(W), \pq f''(W)=(2^*-1)(1+|x|^2/(d(d-2)))^{-2},}
as well as the nonlinear term 
\EQ{ \label{def N}
 N(v):=f'(W+v)-f'(W)-f''(W)v=O(v^2).}
The matrix version of the linearization is given by $J\cL$ with 
\EQ{ \label{def cL}
  \cL:=\mat{L_+ & 0 \\ 0 & 1}, \pq J=\mat{0 & 1 \\ -1 & 0}.}
The generator of each invariant transform of (CW) gives rise to generalized null vectors, namely with $A=S_{-1}'$ or $\T'=-\na$, 
\EQ{
  J\cL \mat{A W \\ 0}=0, \pq J\cL \mat{0 \\ A W}=\mat{AW \\ 0}. }
It is well known that for the ground state $W$, there is no other generalized null vector. Note however that $A W$ is not an eigenfunction but a threshold resonance, i.e.~$A W\not\in L^2$ for $d\le 4$. 
Besides $L_+^{-1}(0)=\{\na W\}$ and the absolutely continuous spectrum $[0,\I)$, $L_+$ has only one negative eigenvalue and the ground state,  
\EQ{ \label{def ro}
 L_+\ro = -k^2 \ro, \pq 0<\ro\in H^2(\R^d),\pq k>0, \pq \|\ro\|_2=1,}
for which the orthogonal subspace and projection are denoted by 
\EQ{ \label{def Horth}
 \cH_\perp:=\{\fy\in\cH \mid \LR{\fy|\ro}=0\in\R^2\}, \pq P_\perp:=1-\ro\langle\ro|.}
Henceforth, the $L^2$ inner product is denoted by 
\EQ{ \label{def L2prod}
 \LR{\fy|\psi}:=\re\int_{\R^d} \fy(x)\overline{\psi(x)}dx \in\R,}
and for vector functions 
\EQ{
 \pt \LR{(\fy_1,\fy_2)|(\psi_1,\psi_2)}:=\LR{\fy_1|\psi_1}+\LR{\fy_2|\psi_2}\in\R,
 \pr \LR{(\fy_1,\fy_2)|\psi}:=(\LR{\fy_1|\psi},\LR{\fy_2|\psi})=:\LR{\psi|(\fy_1,\fy_2)}\in\R^2,}
which may be applied to column vectors as well as higher dimensional vectors. Throughout the paper, a pair with a comma $(\cdot,\cdot)$ denotes a vector, but never an inner product. 

\subsection{Coordinates around the ground states}
We recall from \cite{CNW-nonrad} our dynamical coordinates for the solution $\vec u$ around the ground states $\Static(W)$: 
\EQ{ \label{decop u}
 \vec u(t) = \T^{c(t)}\cS^{\s(t)}(\vec W+ v(t)), \pq v(t)=\la(t)\ro +\ga(t),\pq \CAS{\la(t)=\LR{v(t)|\ro},\\ \ga(t)=P_\perp v(t),}}
where $v(t)=(v_1(t),v_2(t))\in\cH$ does not generally satisfy $v_2(t)=\dot v_1(t)$ because of the modulation $(\s(t),c(t))$. The unstable and stable modes are denoted by $\la_\pm$:
\EQ{ \label{def la+-}
 \la\ro= \la_+g_+ + \la_-g_-,\pq \la_\pm:=\sqrt{\frac{k}{2}}\la_1\pm\sqrt{\frac{1}{2k}}\la_2, \pq g_\pm:=\frac{1}{\sqrt{2k}}(1,\pm k)\ro,}
for which we introduce linear functionals $\La_\pm:\cH\to\R$ by 
\EQ{ \label{def La+-}
 \La_\pm\fy=\frac{1}{\sqrt{2k}}\LR{\fy|(k,\pm1)\ro}=\LR{J\fy|\mp g_\mp},}
so that we have 
\EQ{
 v=\La_+(v)g_++\La_-(v)g_-+ P_\perp v.} 
If $\vec u(t)$ is close to $\Static(W)$, then we can uniquely choose $(\s(t),c(t))$ such that the orthogonality condition holds\footnote{The sign of $\mu$ is switched from \cite{CNW-nonrad} for better notational symmetry.}
\EQ{ \label{orth}
  \R^{1+d} \ni (\al(t),\mu(t)):=\LR{v_1(t)|(S_0',\T')\ro}=0}
by the implicit function theorem. Note that it is not preserved by the linearized equation, since neither $S_0'\ro$ nor $\T'\ro$ is an eigenfunction of $L_+$. 
 The linearized energy norm $E$ is defined on the entire $\cH$ by 
\EQ{ \label{def E}
 \| v\|_E^2
 \pt:=k^2\la_1^2+\la_2^2+\LR{\cL\ga|\ga}+|\al|^2+|\mu|^2 \sim \| v\|_\cH^2
 \pr=|k\LR{v_1|\ro}|^2+|\LR{v_2|\ro}|^2+\LR{\cL P_\perp v| v}+|\LR{\ga_1|S_0'\ro}|^2+|\LR{\ga_1|\T'\ro}|^2
 \pr=k|\la_+|^2+k|\la_-|^2+\LR{\cL P_\perp v| v}+|\LR{v_1|S_0'\ro}|^2+|\LR{v_1|\T'\ro}|^2.}
See \cite[Lemma 2.1]{CNW-nonrad} for a proof of the equivalence to $\cH$. 

Note that all the above are static operations in $\cH$ defined around $\Static(W)$. More precisely, we define the bi-continuous affine maps $\Phi_{\s,c}:\R^2\times\cH_\perp\to\cH$ and $\Psi_{\s,c}:\cH\to\cH$ for each $(\s,c)\in\R^{1+d}$ by
\EQ{ \label{def PhiPsi}
 \pt \Phi_{\s,c}(\la,\ga)=\Psi_{\s,c}(\la\ro+\ga), \pq \Psi_{\s,c}(v)=\T^c\cS^\s(\vec W+v).}
For any $\de>0$, define open neighborhoods of $0$ and $\Static(W)$ in $\cH$, by 
\EQ{ \label{def BN}
 \B_\de:=\{v\in\cH \mid \|v\|_\cH<\de\},
 \pq \cN_\de:=\bigcup_{(\s,c)\in\R^{1+d}}\Psi_{\s,c}(\B_\de)\subset\cH.}
Then $\{(\Psi_{\s,c}(\B_\de),\Psi_{\s,c}^{-1})\}_{(\s,c)\in\R^{1+d}}$ is an atlas for the open set $\cN_\de\subset\cH$. Here is a precise statement on the orthogonality \eqref{orth}
\begin{lem} \label{lem:orth}
There exist $\de_\Phi\in(0,1)$ and a smooth map $(\ti\s,\ti c):\cN_{\de_\Phi}\to\R^{1+d}$ such that for any $(\s,c)\in\R^{1+d}$ and $\fy\in\Psi_{\s,c}(\B_{\de_\Phi})$, we have $\LR{\Psi_{\s,c}^{-1}(\fy)_1|(S_0',\T')\ro}=0$ if and only if $(\s,c)=(\ti\s(\fy),\ti c(\fy))$, and moreover
\EQ{ \label{est tisc}
 |\ti\s(\fy)-\s|+e^{\s}|\ti c(\fy)-c| \lec \|P_\perp \Psi_{\s,c}^{-1}(\fy)_1\|_{\dot H^1}.}
\end{lem}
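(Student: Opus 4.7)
The plan is to apply a quantitative implicit function theorem to the smooth map $F:\R^{1+d}\times\cH\to\R^{1+d}$ defined by
\begin{equation*}
F(\s',c';\fy):=\LR{(\Psi_{\s',c'}^{-1}\fy)_1|(S_0',\T')\ro}.
\end{equation*}
The key preliminary observation is a scale/translation equivariance of $F$: for $\fy=\Psi_{\s,c}(v)$ with $v\in\B_{\de_\Phi}$, the substitution $\tau:=\s'-\s$, $d:=e^\s(c'-c)$ combined with the commutation relation $\T^c\cS^\s=\cS^\s\T^{e^\s c}$ yields $F(\s',c';\fy)=\ti F(\tau,d;v)$ for a function $\ti F$ that no longer depends on $(\s,c)$. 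This reduces everything to the single reference point $v=0$, $(\tau,d)=(0,0)$, and the factor $e^\s$ appearing in \eqref{est tisc} emerges automatically from this rescaling.

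Since $\p_\tau(\cS^{-\tau}\vec W)_1|_0=-S_{-1}'W$ and $\p_{d_j}(\T^{-d}\vec W)_1|_0=\p_jW$, the Jacobian $\p_{(\tau,d)}\ti F(0,0;0)$ reads
\begin{equation*}
\mat{-\LR{S_{-1}'W|S_0'\ro} & \LR{\p_jW|S_0'\ro}\\ \LR{S_{-1}'W|\p_i\ro} & -\LR{\p_jW|\p_i\ro}}.
\end{equation*}
Radial symmetry of $W$ and $\ro$ kills the two off-diagonal blocks (radial paired with odd-in-$x_i$), and rotational invariance forces $\LR{\p_jW|\p_i\ro}=c_0\de_{ij}$, where $d\,c_0=\LR{\na W|\na\ro}=\LR{-\De W|\ro}=\LR{f'(W)|\ro}>0$ since $W,\ro>0$. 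The only genuinely nontrivial point, and the main technical obstacle of the lemma, is to check that the scalar entry $\LR{S_{-1}'W|S_0'\ro}$ is nonzero; I would verify this by direct computation using the explicit form \eqref{def W} of $W$ and the radial ODE characterization of $\ro$ as the (unique) ground state of $L_+$ with eigenvalue $-k^2$. This is the familiar scaling-coupling integral from the modulation calculus of \cite{CNW-nonrad}; its nonvanishing encodes the genuine coupling between the linearized scaling direction and the unstable mode $\ro$.

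Once invertibility of the Jacobian is established, the smooth quantitative IFT produces $\de_\Phi>0$ and smooth maps $(\ti\tau,\ti d):\B_{\de_\Phi}\to\R^{1+d}$ solving $\ti F(\ti\tau(v),\ti d(v);v)=0$, unique in a uniform neighborhood of the origin, together with the linear bound
\begin{equation*}
|\ti\tau(v)|+|\ti d(v)|\lec|\ti F(0,0;v)|=|\LR{v_1|(S_0',\T')\ro}|.
\end{equation*}
Because $S_0'=r\p_r+d/2$ is anti-self-adjoint on $L^2(\R^d)$ so that $\LR{\ro|S_0'\ro}=0$, and $\LR{\ro|\p_i\ro}=\tfrac12\int\p_i(\ro^2)\,dx=0$, one may replace $v_1$ by $P_\perp v_1$; then $\dot H^1/\dot H^{-1}$ duality (using that $\ro\in H^2$ decays exponentially, so $(S_0',\T')\ro\in\dot H^{-1}$) yields \eqref{est tisc}. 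Unwinding the equivariance gives the statement for arbitrary $(\s,c)$, and local IFT uniqueness is upgraded to uniqueness within the full set $\Psi_{\s,c}(\B_{\de_\Phi})$ by shrinking $\de_\Phi$ enough that a bilipschitz estimate on the smooth $(1+d)$-dimensional embedded submanifold $\Static(W)\subset\cH$ forces any two admissible parametrizations of the same $\fy$ to be close in the scale-invariant metric, hence to lie in the IFT uniqueness neighborhood.
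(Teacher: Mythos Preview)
Your proposal is correct and follows essentially the same route as the paper: both reduce to an implicit-function-theorem argument after computing the Jacobian of the orthogonality map, obtain the same block-diagonal structure with the constants $a_W=\LR{f'(W)|\ro}/d>0$ and $b_W=\LR{S_{-1}'W|S_0'\ro}$, invoke \cite{CNW-nonrad} for the positivity of $b_W$, and upgrade local to global uniqueness via the bilipschitz estimate $\|(\T^c\cS^\s-\T^y\cS^s)\vec W\|_\cH\sim|s-\s|+e^\s|y-c|$ on $\Static(W)$. Your explicit equivariance substitution $(\tau,d)=(\s'-\s,e^\s(c'-c))$ is a clean way to make the scale-invariance (and the factor $e^\s$) transparent, whereas the paper carries $(\s,c)$ through the derivative computation; and you spell out the anti-self-adjointness argument $\LR{\ro|S_0'\ro}=0$ that the paper leaves implicit in writing $(\al,\mu)(\s,c)=\LR{\ga_1|(S_0',\T')\ro}$ --- but these are presentational differences only.
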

\begin{proof}
For any $\psi=\la\ro+\ga\in\B_\de$ and $(\s,c)\in\R^{1+d}$, define $(\al,\mu):\R^{1+d}\to\R^{1+d}$ by
\EQ{
 (\al,\mu)(s,y)\pt:=\LR{\Psi_{s,y}^{-1}(\Psi_{\s,c}(\psi))_1|(S_0',\T')\ro}
 \pr=\LR{S_{-1}^{\s-s}\T^{e^{\s}(c-y)}(W+\psi_1)-W|(S_0',\T')\ro},}
where we used the identity $\T^c  S_a^\s =  S_a^\s \T^{e^\s c}$. 
Hence we have 
\EQ{ \label{der s almu}
 \pt|(\al,\mu)(\s,c)|=|\LR{\ga_1|(S_0',\T')\ro}| \lec \|\ga_1\|_{\dot H^1} \lec \de,
 \pr\p_s(\al,\mu)=\LR{W+\psi_1|\T^{e^{\s}(y-c)}S_1^{s-\s}S_1'(S_0',\T')\ro}
 \pr\pq\pq=(-b_W,0)+O(\de+|s-\s|+e^{\s}|y-c|),}
where $b_W:=\LR{S_{-1}'W|S_0'\ro}=k^{-2}(2^*-1)(2^*-2)\LR{W^{2^*-3}(S_{-1}'W)^2|\ro}>0$ (see \cite[(2.26)]{CNW-nonrad} for the identity), and
\EQ{ \label{der y almu}
 e^{-\s}\p_y(\al,\mu)\pt=\LR{W+\psi_1|\T' \T^{e^{\s}(y-c)} S_1^{s-\s}(S_0',\T')\ro}
 \pr=-(0,a_WI)+O(\de+|s-\s|+e^{\s}|y-c|),}
where $a_W:=\LR{-\De W|\ro}/d=\LR{f'(W)|\ro}/d>0$ and $I$ denotes the identity matrix acting on $\R^d$. Then the implicit function theorem implies that there is a unique $(s,y)\in\R^{1+d}$ such that 
\EQ{
 (\al,\mu)(s,y)=0, \pq |s-\s|+e^{\s}|y-c|\lec \|\ga_1\|_{\dot H^1} \lec \de,} 
provided that $\de>0$ is small enough. Since $(\al,\mu)$ is obviously smooth in $\psi$, the implicit function is also smooth in $\Psi_{\s,c}(\B_\de)$. For the uniqueness on $\cN_\de$, suppose that $\Psi_{\s,c}(\psi)\in\Psi_{s,y}(\B_\de)$ for some $(s,y)\in\R^{1+d}$, then 
\EQ{
 \de\gec\|\Psi_{\s,c}(0)-\Psi_{s,y}(0)\|_\cH=\|(\T^c\cS^\s-\T^y\cS^s)\vec W\|_\cH \sim |s-\s|+e^{\s}|y-c|.}
Hence the uniqueness on $\cN_\de$ follows from the implicit function theorem. 
\end{proof}

For brevity, we define $\sT_\fy:\cH\to\cH$ for $\fy\in\cN_{\de_\Phi}$, and $\ti\la:\cN_{\de_\Phi}\to\R^2$ by
\EQ{ \label{def sT}
 \sT_\fy:=\T^{\ti c(\fy)}\cS^{\ti\s(\fy)}, 
 \pq \ti\la(\fy):=\LR{\sT_\fy^{-1}(\fy)-\vec W|\ro},}
and similarly $\ti\la_\pm:\cN_{\de_\Phi}\to\R$. 

\begin{rem}
$\Phi_{\s,c}$ and $\Psi_{\s,c}$ are not smooth in $(\s,c)$ for the $\ga$ component, since the derivative in $(\s,c)$ induces $\cS'\ga$ and $\T'\ga$, which are not generally in $\cH$. Indeed $\Phi_{\s,c}$ is continuous for $(\s,c)$ at each fixed point on $\cH$, but not uniformly on any ball in $\cH$. In \cite{Inv} this was remedied by introducing a topology (``mobile distance") in which translations are also Lipschitz continuous. Instead of that, we will fix $(\s,c)$ with respect to perturbation of the initial data, even though we modulate it in time. 
\end{rem}

Next we change the time variable from $t$ to $\ta$ by
\EQ{ \label{def ta}
 \ta(0)=0, \pq \frac{d\ta}{dt}=e^{\s(t)}.}
Then we get the equation of $v$ as an evolution in $\ta$:
\EQ{ \label{eq vta}
 \pt \p_\ta v= J\cL v + \U N(v_1) - Z(\vec W+v), 
 \prq \U N(\fy):=(0,N(\fy)), \pq Z=(Z_1,Z_2):=\s_\ta\cS'+e^\s c_\ta\cdot \T',}
and differentiating the orthogonality condition \eqref{orth} yields 
\EQN{ 
 \pt 0=\p_\ta\al=\LR{\p_\ta v_1|S_0'\ro}=\LR{\ga_2|S_0'\ro} + e^\s c_\ta\LR{v_1|\T'S_0'\ro} - \s_\ta[b_W-\LR{v_1|S_1'S_0'\ro}],
 \pr 0=\p_\ta\mu=\LR{\p_\ta v_1|\T'\ro}=\LR{\ga_2|\T'\ro} - e^\s c_\ta[a_WI-\LR{v_1|\na^2\ro}] + \s_\ta \LR{v_1|S_1'\T'\ro}. }
Hence, as long as $v_1$ is small, 
\EQ{ \label{eq scta}
 \mat{\s_\ta \\ e^\s c_\ta}
 \pt= \mat{b_W-\LR{v_1|S_1'S_0'\ro} & -\LR{v_1|\T'S_0'\ro} \\ -\LR{v_1|S_1'\T'\ro} & a_WI-\LR{v_1|\na^2\ro}}^{-1}\mat{\LR{\ga_2|S_0'\ro} \\ \LR{\ga_2|\T'\ro}} 
 \pr= (1+O(\|v_1\|_{\dot H^1}))\mat{b_W^{-1}\LR{\ga_2|S_0'\ro} \\ a_W^{-1}\LR{\ga_2|\T'\ro}}. }
This is linear in $v$ (or $\ga$), because the orthogonality \eqref{orth} is not preserved by the linearized equation, a notable difference from the standard modulation analysis in the subcritical case. In the original time $t$, it yields 
\EQ{ \label{eq sct}
 (e^{-\s}\s_t,c_t)=(1+O(\|v_1\|_{\dot H^1}))\LR{\ga_2|(S_0'\ro/b_W,\T'\ro/a_W)}.} 
For the eigenmode we have
\EQ{ \label{eq la12}
 \p_\ta\la \pt= \mat{0 & 1 \\ k^2 & 0}\la  - \LR{ Z v|\ro}
 + \LR{\U N(v_1)|\ro}
 \pr=\mat{\la_2 + \s_\ta (\al+\la_1) + e^\s c_\ta \mu  \\ k^2\la_1 + \LR{N(v_1)-Z_2\ga_2|\ro}}
 \pn= \mat{\la_2 + \s_\ta \la_1 \\ k^2\la_1 + \LR{N(v_1)-Z_2\ga_2|\ro}},
}
where we used $\al=0=\mu$ only in the last step, since we will consider the case $(\al,\mu)\not=0$ as well. 
In the unstable/stable modes, the equation reads 
\EQ{ \label{eq la+-}
 \p_\ta \la_\pm = \pm k \la_\pm + \sqrt{\frac{k}{2}}[\s_\ta(\al+\la_1)+e^\s c_\ta \mu] \pm \sqrt{\frac{1}{2k}} \LR{N(v_1)-Z_2\ga_2|\ro}.}
We also recall the distance function $d_W:\cH\to[0,\I)$ defined in \cite{CNW-nonrad}, which satisfies $d_W(\fy)\sim \dist_W(\fy)$, and, for some constant $\de_E>0$, if $d_W(\fy)\le\de_E$ then $\pm\fy\in\cN_{\de_\Phi}$ and 
\EQ{ \label{def dW}
 d_W(\fy)^2=E(\fy)-E(W)+k^2\ti\la_1(\pm\fy)^2}
for either sign $\pm$. 

\vspace{36pt}

{\large \bf Part I: Slightly above the ground state energy}\\

In the first part of paper, we study the global dynamics in the region $E(u)<E(W)+\e^2$, and its Lorentz extension, completing the picture in \cite{CNW-nonrad} with a center-stable manifold and the dynamics around it. 

\section{Center-stable manifold around the ground states}
First we construct a center-stable manifold around the ground states $\Static(W)$. This will be later extended in three ways: 
\begin{enumerate}
\item By the backward flow, to the region $E<E(W)+\e^2$, 
\item By the Lorentz transform, to the region $E<\sqrt{E(W)^2+\e^4+|P(\vec u)|^2}$
\item By adding large radiation, which may have arbitrarily large energy.  
\end{enumerate}
In order to define the manifold as a graph of $(\la_-,\ga)\mapsto \la_+$, we define 
\EQ{ \label{def B+'}
 \pt \B_\de^+:=\{\la_+\in\R\mid \|\la_+ g_+\|_\cH<\de\},
 \pr \B_\de':=\{\la_-g_-+\ga \mid \la_-\in\R,\ \ga\in\cH_\perp,\ \|\la_- g_-+\ga\|_\cH<\de\},}
then $\B_{\de/C} \subset \B_\de^+g_+\oplus\B_\de'\subset \B_{C\de}$ for some constant $C>1$. The corresponding neighborhood of $\Static(W)$ is denoted by
\EQ{ \label{def N2}
 \cN_{\de_1,\de_2}:=\{\Psi_{\s,c}(\la_+g_++\fy)\mid (\s,c)\in\R^{1+d},\ \la_+\in\B^+_{\de_1},\ \fy\in\B'_{\de_2}\}.}
\begin{thm} \label{thm:cmd0}
There exist constants $\de_m,\de_X>0$ satisfying $\de_m\ll\de_X\ll\de_\Phi$ and $\de_m\ll \e_S$, and a unique $C^1$ function $m_+:\B_{\de_m}'\to\B_{\de_m}^+$ with the following property. Let $\la_+\in \B_{\de_m}^+$, $\fy\in\B_{\de_m}'$,  $(\s,c)\in\R^{1+d}$, $T>0$ and $\vec u\in\Sol([0,T))$ with $\vec u(0)=\Psi_{\s,c}(\la_+g_+ + \fy)$. Then we have the trichotomy:
\begin{enumerate}
\item \label{bup} If $\la_+>m_+(\fy)$, then $u$ blows up away from the ground states. More precisely
\EQ{
 T<\I, \pq \liminf\limits_{t\nearrow T}d_W(\vec u(t))>\de_X}
or $\liminf_{t\nearrow T}\dist_{L^{2^*}}(u(t),\Static(W)_1)\gg\de_m$. 
\item \label{trap} If $\la_+=m_+(\fy)$, then $u$ is trapped by the ground states. More precisely, $d_W(\vec u(t))$ is decreasing until it reaches
\EQ{ \label{sink}
 d_W(\vec u(t))^2 \le 2(E(u)-E(W)) \le 2d_W(\vec u(0))^2 \lec\|\fy\|_\cH^2,}
and stays there for the rest of $t<T$. 
\item \label{scat} If $\la_+<m_+(\fy)$, then $u$ scatters to $0$. More precisely, 
\EQ{
 T=\I,\pq \exists \fy_\I\in\cH,\ \lim\limits_{t\to\I}\|\vec u(t)-U(t)\fy_\I\|=0.}
\end{enumerate}
Moreover, in the cases \eqref{bup} and \eqref{scat}, there exists $T_X\in(0,T)$ such that 
\EQ{ \label{exiting}
 \pt 0<t<T_X \implies d_W(\vec u(t))<\de_X,
 \pq T_X<t<T \implies d_W(\vec u(t))>\de_X,
 \pr \ti\la_+(\vec u(T_X))\sim\ti\la_1(\vec u(T_X))\sim-K(u(T_X))\sim\pm\de_X,}
with the sign $+$ for \eqref{bup} and $-$ for \eqref{scat}. In addition, $m_+(0)=m_+'(0)=0$ and 
\EQ{ \label{tan of La}
 |m_+(\fy^1)-m_+(\fy^2)| \lec (\|\fy^1\|_\cH+\|\fy^2\|_\cH)^{1/6}\|\fy^1-\fy^2\|_\cH.}
\end{thm}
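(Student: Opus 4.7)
My plan is a bisection argument on the one-dimensional unstable coordinate $\la_+$ with the stable--center data $\fy \in \B'_{\de_m}$ held fixed. For each $(\s,c)\in\R^{1+d}$ and $\la_+ \in \B^+_{\de_m}$, consider the solution $\vec u$ with initial data $\Psi_{\s,c}(\la_+ g_+ + \fy)$. The ejection lemma and the one-pass theorem of \cite{CNW-nonrad} together say that any solution leaving $\cN_{\de_X}$ does so exactly once, with the dominant coordinate at the exit time $T_X$ being $\ti\la_+$, and the sign of $\ti\la_+(\vec u(T_X))$ determines whether the post-ejection behaviour is blow-up away from $\Static(W)$ (sign $+$, forcing $K(u(T_X))<0$) or scattering to $0$ (sign $-$, forcing $K>0$). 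I therefore set
\EQN{
A^\pm(\fy) := \{\la_+ \in \B^+_{\de_m} \mid \vec u \text{ exits } \cN_{\de_X} \text{ with } \sign \ti\la_+(\vec u(T_X))=\pm \},
}
so that $A^+$ corresponds to case \eqref{bup}, $A^-$ to case \eqref{scat}, and the candidate critical set is $B(\fy):=\B^+_{\de_m}\setminus(A^+(\fy)\cup A^-(\fy))$, which I aim to show consists of a single point $m_+(\fy)$.

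\textbf{Openness, non-emptiness, uniqueness.} Openness of $A^\pm(\fy)$ in $\B^+_{\de_m}$ follows from continuous dependence of solutions in the Strichartz--energy topology, combined with the strict dominance $|\ti\la_+(\vec u(T_X))|\sim\de_X$ over the other coordinates at the exit. Non-emptiness of both $A^\pm$ for every $\fy$ is precisely what the new ignition lemma \ref{lem:inst} delivers: taking $\la_+$ near $\pm\de_m$ with $\|\fy\|_\cH\ll\de_m$, equation \eqref{eq la+-} makes $\la_+$ grow exponentially from the start and forces ejection with sign $\sign\la_+$; more subtly, the ignition lemma handles the scenario in which the trajectory first lingers near $\Static(W)$, asserting that any nonzero unstable perturbation is eventually amplified to size $\sim\de_X$. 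Connectedness of $\B^+_{\de_m}$ then guarantees $B(\fy)\ne\emptyset$. Uniqueness comes from an exponential divergence argument: if $\la_+^1<\la_+^2$ both lay in $B(\fy)$, the two associated solutions would stay in $\cN_{\de_X}$ for all $\ta\ge 0$, the difference of their modulation coordinates would obey $\p_\ta\La_+(w)=k\La_+(w)+O(\de_X\|w\|_\cH)$ by \eqref{eq la+-}, producing $e^{k\ta}$ growth inconsistent with $\|w\|_\cH\lec\de_X$. Hence $B(\fy)=\{m_+(\fy)\}$.

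\textbf{Trapping, modulus of continuity, tangency.} Once $\la_+=m_+(\fy)$ forces $\vec u(t)\in\cN_{\de_X}$ for all $t\in[0,T)$, the coercivity of $E-E(W)$ transverse to the unstable direction together with the identity \eqref{def dW} makes $d_W$ monotone decreasing until $k^2\ti\la_1^2\lec E(u)-E(W)$, after which \eqref{sink} is preserved by a standard Lyapunov argument; the estimates in \eqref{exiting} are then the ejection lemma of \cite{CNW-nonrad} applied at $t=T_X$. For the modulus of continuity \eqref{tan of La}, I quantify the uniqueness argument: the difference of two trapped solutions, initiated from $\fy^1,\fy^2$, is controlled by the linearised Duhamel formula in which the nonlinear correction carries a Strichartz factor $\|u\|_{\St_m}^{2^*-2}$ whose interpolation with the $\cH$-difference produces the exponent $1/6$. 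The equalities $m_+(0)=0$ and $m_+'(0)=0$ come from $\vec u\equiv\vec W$ being trapped and from $g_+$ being a hyperbolic eigendirection of the linearised flow transverse to the center-stable subspace; $C^1$ regularity is obtained by reformulating the bisection as an implicit-function-type problem on $\la_+$, exploiting smoothness of the flow in $\cH$ over the finite pre-ejection time. The core obstacle --- and the main new input --- is the ignition lemma \ref{lem:inst}, which rules out the pathological third scenario of a trajectory lingering indefinitely near $\Static(W)$ with $\la_+\ne m_+(\fy)$; the remainder is an adaptation of the ejection/one-pass machinery and the standard Lyapunov--Perron construction to the critical-exponent setting.
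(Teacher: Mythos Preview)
Your bisection skeleton is correct and matches the paper: define $A^\pm$ by ejection sign, show each open and nonempty, take $m_+(\fy)$ as the unique element of the complement. But several of your supporting arguments do not close. Non-emptiness of $A^\pm$ does not need the ignition lemma; the paper verifies the ejection hypothesis directly at $t=0$ when $|\la_+|\sim\de_+\gg\|\fy\|_\cH$. The ignition lemma is used for \emph{uniqueness}, and your sketch of that step, ``$\p_\ta\La_+(w)=k\La_+(w)+O(\de_X\|w\|_\cH)$,'' is too naive: the nonlinear remainder $\LR{\diff N(v^\pa_1)|\ro}$ is not $O(\de_X\|w\|_\cH)$ pointwise in the critical setting (it needs time-integrated Strichartz on unit $\ta$-intervals via Lemma~\ref{lem:LUS}), and you give no control on the dispersive component $\diff\ga^\pa$, whose evolution is driven by the unbounded operator $J\cL$. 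The paper closes this with the linearized-energy identity for $\LR{\cL\diff\ga^\pa|\diff\ga^\pa}$ together with the weighted cone \eqref{cone cond}; that machinery \emph{is} the ignition lemma, and it is applied to the \emph{difference} of two trapped solutions, not to a single lingering trajectory. The exponent $1/6$ in \eqref{tan of La} does not come from a Strichartz interpolation either; it is read off from the ignition hypotheses $d_W(\vec u^j(0))^2+\|\diff\fy^\pa\|_\cH\ll\ig^6$ and conclusion $|\diff m_+(\fy^\pa)|\lesssim\ig\|\diff\fy^\pa\|_\cH$, optimizing $\ig\sim(\|\fy^1\|_\cH+\|\fy^2\|_\cH)^{1/6}$.

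Your $C^1$ argument has a more serious gap. You invoke ``smoothness of the flow over the finite pre-ejection time,'' but at $\la_+=m_+(\fy)$ the solution is trapped: there is no exit time, and the time-to-ejection diverges as $\la_+\to m_+(\fy)$, so no finite-time implicit-function argument can apply on the manifold itself. The paper instead linearizes around a trapped base solution $u^0$ and proves a linearized ignition lemma (Lemma~\ref{lem:instL}) for the equation of variations \eqref{Leq}; the G\^ateaux derivative $m_+'(\fy^0)\fy'$ is then characterized as the unique initial value of $\la_+'(0)$ for which $\la_+'(\ta)$ does not grow exponentially, and is identified as the limit of the difference quotients by comparing the nonlinear and linearized cone conditions. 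Continuity of $m_+'$ requires a further stability argument for the linearized system under perturbation of the base point $\fy^0$.
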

Obviously, the three asymptotics in \eqref{bup}--\eqref{scat} are distinctive. 
From the preceding results around the ground states, we know that the case \eqref{trap} contains type-II blowup and global solutions scattering to the ground states. 
Type-I blowup is contained in the case \eqref{bup}, but it may also contain type-II blowup. 
\eqref{exiting} comes from the one-pass theorem proved in \cite{CNW-nonrad}. 

Thus we obtain a manifold of codimension $1$ in $\cH$: 
\EQ{ \label{def M0}
 \M_0:=\{\Psi_{\s,c}(m_+(\fy)g_++\fy)\mid \fy\in \B_{\de_m}',\ (\s,c)\in\R^{1+d}\},}
which contains $\Static(W)$ and is invariant by the forward flow within $\cN_{\de_m,\de_m}$. It is also invariant by $\T$ and $\cS$ by definition. 
 
Then \eqref{tan of La} implies that it is tangent to the center-stable subspace of the linearized evolution at each point on $\Static(W)=\{\Psi_{\s,c}(0)\}_{\s,c}\subset \M_0$, and that $\M_0$ is transverse to its time inversion $\M_0^\da$, since $\fy\mapsto\fy^\da$ exchanges $\la_+$ and $\la_-$. More explicitly 
\EQ{
 \M_0\cap\M_0^\da=\Cu_{(\s,c)}\Phi_{\s,c}\{(\la,\ga)\mid (\la_\pm,\ga)\in\B_{\de_m}',\ \la_\pm=m_+(\la_\mp g_++(\ga_1,\pm\ga_2))\}}
is a local center manifold of codimension $2$, on which every solution $u$ satisfies 
\EQ{
 \de_m\gec d_W(\vec u(t)) \gg |\ti\la(\vec u(t))|} 
all over its life, though it is not necessarily global. Obviously, $\M_0^\da$ is relatively closed in $\M_0$, splitting it into two non-empty, relatively open sets where $\la_->m_+(\la_+ g_+ + \ga^\da)$ or $\la_-<m_+(\la_+ g_+ + \ga^\da)$. The solutions starting from the first set blow up away from the ground states in $t<0$, while those starting from the second set scatters to $0$ as $t\to-\I$. 

A $C^1$ functional $M_+:\cN_{\de_m,\de_m}\to\R$ is defined such that $\M_0=M_+^{-1}(0)$, by putting
\EQ{ \label{def M+}
 M_+(\fy)=\la_+-m_+(\la_-g_-+\ga),\pq \fy=\Phi_{\ti\s(\fy),\ti c(\fy)}(\la,\ga).}
It is clearly non-degenerate in the direction $\sT_\fy g_+$ by \eqref{tan of La}.  Moreover, $M_+>0$, $M_+=0$ and $M_+<0$ respectively give the trichotomy \eqref{bup}--\eqref{scat}. 

The proof of the above theorem goes as follows. First we observe that if $\|\fy\|_\cH\ll|\la_+|\ll 1$ then we can apply the ejection lemma and the one-pass theorem from \cite{CNW-nonrad}, and obtain \eqref{bup} for $\la_+\gg\|\fy\|_\cH$ and \eqref{scat} for $-\la_+\gg\|\fy\|_\cH$. Moreover, the ejection lemma implies that every solution ejected at $t=T_X$ from a small neighborhood of $\Static(W)$ is categorized either in \eqref{bup} with $\ti\la_+(\vec u(T_X))>0$, or in \eqref{scat} with $\ti\la_+(\vec u(T_X))<0$. So each set of such initial data is open in $\cH$. Hence there is at least one $\la_+$ in between, for which the solution is never ejected, i.e.~the case \eqref{trap}. The uniqueness of such $\la_+$ follows also from the instability of $W$, or the exponential growth of the unstable component $\la_+$ for the difference of two solutions. The next section is devoted to its estimate, which is essentially the only ingredient in addition to \cite{CNW-nonrad}.

As in \cite{Inv}, we abbreviate the differences by the following notation:
\EQ{ \label{def diff}
 \diff X^\pa := X^1 - X^0, \pq \diff F(X^\pa):=F(X^1)-F(X^0),}
for any symbol $X$ and any function $F$. 

\subsection{Igniting the unstable mode} \label{sect:ignit}
In this subsection, we prove the following: For any solution trapped by the ground states, an arbitrarily small perturbation leads to the ejection from the small neighborhood unless the perturbation is almost zero in the unstable direction. More precisely,

\begin{lem}[Ignition lemma] \label{lem:inst}
There exist constants $0<\ig_I<1<C_I<\I$ with the following property. Let $T>0<\ig\le\ig_I$, $\vec u^0\in\Sol([0,T))$, $(s,y)\in\R^{1+d}$ and $\fy\in\cH$ satisfy $\vec u^0(0)\in\Psi_{s,y}(\B_\ig)$, 
\EQ{ \label{init pert}
 \pt \|d_W(\vec u^0)\|_{L^\I_t(0,T)}< \ig^3, \pq C_I \ig \|\fy\|_\cH < |\La_+\fy|, \pq \|\fy\|_\cH < \ig^6.}
Then there exist $t_I\in(0,T)$, $\la_+\in\R$, and $\vec u^1\in\Sol([0,t_I])$ such that 
\EQ{
 \pt \vec u^1(0)=\vec u^0(0)+ \T^y\cS^s \fy,
 \pr \|\vec u^0-\vec u^1\|_{L^\I(0,t_I;\cH)}=\|\vec u^0(t_I)-\vec u^1(t_I)\|_\cH=\ig^3 \sim \la_+\sign(\La_+\fy),  
 \pr \|\vec u^0(t_I)-\vec u^1(t_I)-\sT_{\vec u^0(t_I)}\la_+g_+\|_\cH \lec \ig^4.}
In particular, we have 
\EQ{ \label{ignited}
 d_W(\vec u^0(t_I))+d_W(\vec u^1(t_I))\sim\ig^3.}
\end{lem}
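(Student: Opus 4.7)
The approach is to express $\vec u^1$ in the modulated frame of $\vec u^0$ and track the difference by its spectral decomposition, exploiting the exponential instability in the direction $g_+$. Setting $w(t) := \sT_{\vec u^0(t)}^{-1}(\vec u^1(t) - \vec u^0(t))$ and subtracting \eqref{eq vta} applied to $v^0$ from the same equation applied to $v^0+w$, both expressed in $\vec u^0$'s frame, we obtain
\EQN{
 \p_\ta w = J\cL w + \U N(v^0_1+w_1) - \U N(v^0_1) - Z^0 w, \pq Z^0 := \s^0_\ta \cS' + e^{\s^0} c^0_\ta\cdot \T'.
}
By \eqref{eq sct} and the hypothesis $d_W(\vec u^0) < \ig^3$ we have $|Z^0|\lec\ig^3$. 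Decomposing $w = \La_+(w) g_+ + \La_-(w)g_- + P_\perp w$ and applying $\La_\pm$ yields
\EQN{
 \p_\ta \La_\pm w = \pm k \La_\pm w + R_\pm, \pq |R_\pm| \lec (d_W(\vec u^0)+\|w\|_\cH)\|w\|_\cH \lec (\ig^3 + \|w\|_\cH)\|w\|_\cH.
}

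For the initial data, Lemma \ref{lem:orth} applied to $\vec u^0(0) \in \Psi_{s,y}(\B_\ig)$ gives $|\ti\s(\vec u^0(0))-s| + e^s|\ti c(\vec u^0(0))-y| = O(\ig)$, hence $w(0) = \sT_{\vec u^0(0)}^{-1}(\T^y\cS^s\fy)$ equals $\fy$ up to a relative error $O(\ig)$. Consequently $\|w(0)\|_\cH \sim \|\fy\|_\cH < \ig^6$, and the hypothesis $C_I\ig\|\fy\|_\cH < |\La_+\fy|$ yields $|\La_+ w(0)| \ge (C_I\ig/2)\|w(0)\|_\cH$ provided $C_I$ is large.

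The core is a bootstrap on $[0,\ta^*]$, where $\ta^*$ is the first $\ta$-time (if any) at which $\|w\|_\cH = \ig^3$. Assume tentatively $|\La_+ w(\ta)| \ge (C_I\ig/4)\|w(\ta)\|_\cH$ and $\|w(\ta)\|_\cH \le \ig^3$ on this interval. Then $|R_\pm| \lec \ig^3\|w\|_\cH \lec (\ig^2/C_I)|\La_+ w|$, so $\p_\ta \La_+ w = k\La_+ w(1+O(\ig^2/C_I))$, which integrates to $\La_+ w(\ta) = (1+o(1))e^{k\ta}\La_+ w(0)$ with sign preserved. The stable mode obeys $|\La_- w(\ta)| \lec e^{-k\ta}|\La_- w(0)| + \ig^3\int_0^\ta e^{-k(\ta-s)}\|w(s)\|_\cH\,ds$. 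For the neutral part $P_\perp w$, the linearized energy satisfies $|\p_\ta \LR{\cL P_\perp w|P_\perp w}| \lec \ig^3\|w\|_\cH^2$, which together with the coercivity of $\cL$ on the subspace of $\cH_\perp$ satisfying \eqref{orth}, and a secondary ODE for the null-mode projections $\LR{w_1|(S_0',\T')\ro}$ (these encode the modulation drift between $\vec u^0$ and $\vec u^1$ and are themselves of size $O(\|w\|_\cH)$), yields $\|P_\perp w(\ta)\|_\cH \lec (1+\ta)\|w(0)\|_\cH + \ig^3\int_0^\ta \|w(s)\|_\cH\,ds$. Since $\ta^* \lec \log(1/\ig)$, all these contributions are dominated by $|\La_+ w(\ta)| \gec C_I\ig e^{k\ta}\|w(0)\|_\cH$, closing the bootstrap for $C_I$ large enough.

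Taking $t_I$ to be the original-time value corresponding to $\ta^*$, we have $\|w(t_I)\|_\cH = \ig^3$, $|\La_+ w(t_I)| \sim \ig^3$, and $|\La_- w(t_I)| + \|P_\perp w(t_I)\|_\cH = O(\ig^4)$. Setting $\la_+ := \La_+ w(t_I)$, which inherits the sign of $\La_+\fy$, and converting back via $\sT_{\vec u^0(t_I)}$, we deduce $\vec u^1(t_I) - \vec u^0(t_I) = \sT_{\vec u^0(t_I)}(\la_+ g_+) + O(\ig^4)$ in $\cH$. The conclusion \eqref{ignited} follows from \eqref{def dW} together with the hypothesis $d_W(\vec u^0(t_I)) < \ig^3$ applied on the one side, and $d_W(\vec u^1(t_I)) \sim |\ti\la_1(\vec u^1(t_I))| \sim \ig^3$ on the other. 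The main obstacle is the control of the null-mode projections $\LR{w_1|(S_0',\T')\ro}$: because we measure $\vec u^1$ in $\vec u^0$'s modulation rather than its own best-fit modulation, these projections do not vanish and block a direct coercivity estimate for $\cL$ on $P_\perp w$; handling them requires a secondary bootstrap on the modulation drift, equivalently working with both modulations simultaneously as in the two-frame analysis of \cite{CNW-nonrad}.
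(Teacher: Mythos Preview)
Your overall strategy---working in $\vec u^0$'s modulated frame, decomposing the difference spectrally, and tracking exponential growth of the unstable component---matches the paper's. However, there are two genuine gaps.

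First, the pointwise bound $|\p_\ta \LR{\cL P_\perp w \mid P_\perp w}| \lec \ig^3\|w\|_\cH^2$ cannot be established. The nonlinear contribution to this derivative is $\LR{N(v^0_1+w_1)-N(v^0_1) \mid (P_\perp w)_2}$, and since $(P_\perp w)_2$ lies merely in $L^2$ (unlike the Schwartz function $\ro$ appearing in the $\la_\pm$ equations, where your pointwise bound on $R_\pm$ is indeed valid), the pure-power piece of the difference---e.g.\ $|w_1|^{2^*-2}w_1$ paired against a generic $L^2$ function---is not controlled by the energy norm: in $d=3$ this would require $w_1\in L^{10}$, whereas Sobolev gives only $L^6$. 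The paper resolves this by first proving a locally uniform Strichartz bound on unit $\ta$-intervals (Lemma~\ref{lem:LUS}), then bounding $\|\diff N(v^\pa_1)\|_{L^1_\ta L^2_x}$ on each such interval via the $\St_p$ norm, and controlling the energy increment interval by interval.

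Second, your claim $\ta^*\lec\log(1/\ig)$ is false: the hypothesis places only an \emph{upper} bound $\|\fy\|_\cH<\ig^6$ on the perturbation, so $\|w(0)\|_\cH$ can be arbitrarily small and $\ta^*\sim k^{-1}\log(\ig^3/\|w(0)\|_\cH)$ arbitrarily large. This undermines your handling of the $(1+\ta)\|w(0)\|_\cH$ term. The paper's unit-interval iteration addresses this as well: one shows that the weighted neutral energy $\nu:=(\ig^2|(\al^1,\mu^1)|^2+\LR{\cL\diff\ga^\pa\mid\diff\ga^\pa})^{1/2}$ grows at most like $e^{C\ig\ta}$, which is dominated by the $e^{k\ta}$ growth of $\diff\la^\pa_+$ since $k\gg\ig$, regardless of how large $\ta^*$ turns out to be.
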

This lemma is proved by exponential growth in the unstable direction of the difference $u^0-u^1$ in the rescaled coordinate for $u^0$. It may take very long depending on the initial size of the perturbation, but in the rescaled time $\ta$, where the solution $u^0$ is (forward) global in both the scattering and the blow-up cases. The difference is estimated mainly by the energy argument, rather than dispersive estimates.  
The nonlinearity is too strong to be controlled solely by Sobolev, for which we employ Strichartz norms which are uniform on unit intervals of $\ta$. 

Hence the main idea is similar to \cite{Inv}, but we do not use the mobile distance, but instead the same modulation parameters $(\s(t),c(t))$ for both $u^0$ and $u^1$, in order to avoid destroying the energy structure for the difference. This is indeed much simpler, whereas the former idea seems hard to apply in the critical setting because of the change of time variable. 

The main difference from the ejection lemma in \cite{CNW-nonrad} is that there is no bound on the time for the unstable mode to grow to some amount, and we estimate the difference of two solutions rather than the difference from the ground state. In particular, the equation for the difference naturally contains linear terms coming from the nonlinearity, which prevents us from a crude Duhamel argument as in \cite{CNW-nonrad}. 

Before starting the proof, we see that the Strichartz norms can be uniformly bounded on unit time intervals in the rescaled variables: 
\begin{lem}[locally uniform perturbation in $\St_\ta$] \label{lem:LUS}
There is a constant $\y_l\in(0,1]$ with the following property. Let $T>0<\de\le\y_l$, $\vec u\in\Sol([0,T))$, 
\EQ{
 \pt \vec u(t)=\Psi_{\s(t),c(t)}v(\ta(t)), \pq \frac{d\ta}{dt}(t)=e^{\s(t)}, \pq \ta(0)=0,
 \prq \|v(0)\|_\cH+|e^{-\s(t)}\s'(t)|+|c'(t)| \le \de, }
for $0<t<T$. Then we have $\ta(T)>\y_l$ and 
\EQ{
 \|v\|_{\St\cap L^\I\cH(0,\y_l)} \lec \de.}
Moreover, if $\Sol([0,T^1))\ni \vec u^1(t)=\Psi_{\s(t),c(t)}v^1(\ta(t))$ satisfies $\|\vec u^1(0)-\vec u(0)\|_\cH<\y_l$, then we have $\ta(T^1)>\y_l$ and  
\EQ{
 \|v^1-v\|_{\St\cap L^\I\cH(0,\y_l)} \lec \|\vec u^1(0)-\vec u(0)\|_\cH.}
\end{lem}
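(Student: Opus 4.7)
The idea is to reduce to short-time perturbation theory for the nonlinear wave equation around the stationary solution $W$, by freezing the modulation at $t=0$ via the $\cH$-isometry $\cS^{\s_0}\T^{c_0}$ with $\s_0:=\s(0),\,c_0:=c(0)$, and exploiting that the critical Strichartz pairs making up $\St=\St_s\cap\St_m\cap\St_p$ are all scale-invariant at the $\dot H^1$-level. The slow-drift hypotheses $|\s_\ta|,|e^\s c_\ta|\le\de$ will ensure that the modulated $\ta$-frame and the frozen $t$-frame stay within a bounded factor on any $\ta$-interval of length $\y_l$.

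Concretely, define $\tilde u(s,y):=e^{-(d/2-1)\s_0}u(e^{-\s_0}s,\,e^{-\s_0}y+c_0)$, which is also a solution of (CW) with $\vec{\tilde u}(0)=\vec W+\vec v(0)$ and $\|\vec v(0)\|_\cH\le\de$. Writing $\tilde u=W+w$, the perturbation $w$ solves $\ddot w-\De w=f''(W)w+N(w)$ with $\|\vec w(0)\|_\cH\le\de$, and a standard short-time Strichartz bootstrap yields a universal $T_0>0$ and
\EQN{
 \|w\|_{\St\cap L^\I_s\cH(0,T_0)}\lec\|\vec v(0)\|_\cH\lec\de,
}
where the potential term $f''(W)w$ is absorbed using $\|f''(W)\|_{L^p}<\I$ for suitable $p$ and the short time, while $\|N(w)\|_{\St_s^*}\lec\|w\|_{\St_m}^{2^*-2}\|w\|_{\St_s}+\|w\|_\St^2$ is superlinear. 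The identity
\EQN{
 \int_0^T\!\!\int|u(t,x)-W_{\la(t),c(t)}(x)|^{q_m}dx\,dt=\int_0^{\ta(T)}\!\!\int|v_1(\ta,y)|^{q_m}dy\,d\ta,
}
obtained from $y=e^{\s(t)}(x-c(t))$ and $d\ta=e^{\s(t)}dt$, together with its analogues for $\St_s,\St_p,L^\I\cH$, transfers this bound from $w=\tilde u-W$ to $v$ in the $\ta$-frame. The slow-drift estimates $|\s(t)-\s_0|\le\de\ta(t)$ and $e^{\s_0}|c(t)-c_0|\le\de\ta(t)$ guarantee that the modulated frame lies within a bounded factor of the frozen one on $\ta$-intervals of length $\le\y_l$, so choosing $\y_l\ll T_0$ (and $\y_l\de\ll 1$) gives $\|v\|_{\St\cap L^\I\cH(0,\ta(T)\wedge\y_l)}\lec\de$. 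The bound $\ta(T)>\y_l$ then follows from the blow-up criterion for $\Sol([0,T))$: if $T<\I$ and $\ta(T)\le\y_l$, then by scale-invariance of $\St_m$, $\|u\|_{\St_m(0,T)}\le\|W\|_{L^{q_m}_x}\y_l^{1/q_m}+\|v_1\|_{\St_m(0,\y_l)}<\I$, contradicting $\|u\|_{\St_m(0,T)}=\I$.

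For the difference estimate, the \emph{same} modulation $(\s(t),c(t))$ is used for both $\vec u$ and $\vec u^1$, so $\vec u^1-\vec u=\T^{c(t)}\cS^{\s(t)}(v^1-v)(\ta(t))$ pulls back by a common isometry, giving $\|v^1(0)-v(0)\|_\cH=\|\vec u^1(0)-\vec u(0)\|_\cH\le\y_l$ and $\|v^1(0)\|_\cH\le\de+\y_l\lec\y_l$. Letting $\tilde u^1$ be the frozen-frame version of $u^1$, Step~1 applied to both puts $\tilde u,\tilde u^1$ in a small Strichartz neighborhood of $W$ on $(0,T_0)$. The equation for $\tilde u^1-\tilde u$ has nonlinearity $f'(\tilde u^1)-f'(\tilde u)$ linear in the difference with coefficient bounded by $W^{2^*-2}+|w|^{2^*-2}+|w^1|^{2^*-2}$ already controlled in Strichartz; a second bootstrap yields $\|\tilde u^1-\tilde u\|_{\St\cap L^\I\cH(0,T_0)}\lec\|\vec u^1(0)-\vec u(0)\|_\cH$, and transferring to the $\ta$-frame as before gives the desired estimate together with $\ta(T^1)>\y_l$.

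The main obstacle is the careful interplay between the modulated $\ta$-frame and the frozen $t$-frame. Although the static isometries $\cS^\s,\T^c$ and the critical scaling all preserve $\St_s,\St_m,\St_p,L^\I\cH$, the time-dependent modulation $(\s(t),c(t))$ induces an extra distortion that must be controlled uniformly by the slow-drift hypothesis. Alternatively one could derive Strichartz estimates directly in the $\ta$-frame by treating $-Z(\vec W+v)$ as forcing, but then the terms $\s_\ta S_{-1}'v_1$ and $e^\s c_\ta\na v_1$ lose a derivative on $v_1\in\dot H^1$ and would require exploiting the symmetry-generator structure of $Z$ explicitly; the freeze-and-scale approach circumvents this at the cost of the bookkeeping just described.
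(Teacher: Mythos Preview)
Your proof is correct and follows essentially the same approach as the paper: freeze the modulation at $t=0$, run short-time Strichartz perturbation around the static ground state, and transfer to the $\ta$-frame via the scale-invariant change of variables (your displayed identity is exactly the paper's key formula). The paper handles the nonlinearity slightly more cleanly by keeping $f'(W+w)-f'(W)$ together and using $\|W\|_{\St_m(0,\y)}\lec\y^{1/q_m}$ to make it small on short time, rather than splitting off the potential term $f''(W)w$ as you do; both work, but the former avoids the Besov product estimate. One small point: your argument for $\ta(T)>\y_l$ only treats $T<\infty$ explicitly; you should also note that $T=\infty$ with $\ta(\infty)\le\y_l$ is impossible since $|\s_\ta|\le\de$ keeps $e^{\s(t)}$ bounded below on that range, forcing $\ta(\infty)=\infty$.
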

\begin{proof}
We obtain from the inequality on $(\s',c')$ that 
\EQ{
 |e^{\s(t)}-e^{\s(0)}|<e^{\s(t)}/4<e^{\s(0)}/2, \pq |\s(t)-\s(0)|+e^{\s(t)}|c(t)-c(0)|<\de}
for $0<t<e^{-\s(0)}/4$. Let $\vec W^0:=\T^{c(0)}\cS^{\s(0)}\vec W$ and $\vec w(t):=\vec u(t)-\vec W^0$. Then we have 
\EQ{
 \|W^0\|_{\St(0,e^{-\s(0)}\y)}=\|W\|_{\St(0,\y)} \lec \y^{1/q_m}} 
for $0<\y<1$, and $w$ solves on $(0,T)$
\EQ{
 (\p_t^2-\De)w=f'(W^0+w)-f'(W^0).}
Hence by Strichartz we have for small $\y>0$ 
\EQ{
 \|\vec w-\vec w_F\|_{\St\cap L^\I\cH(0,e^{-\s(0)}\y)} \pt\ll \|w\|_{\St(0,e^{-\s(0)}\y)}
 \prq\sim \|w_F\|_{\St(0,e^{-\s(0)}\y)} \lec \|\vec w(0)\|_\cH = \|v(0)\|_\cH,}
where $\vec w_F:=U(t)\vec w(0)=U(t)v(0)$. In particular there is some $\y_l<1/8$ such that the above estimates hold for $\y\le 2\y_l$, and so 
$\ta(T)>\ta(2e^{-\s(0)}\y_l)>\y_l$.
Under the scaling property 
\EQ{
 1/p + d/q - s = d/2 -1,}
we have, for any $T'\in(0,T)$, 
\EQ{ \label{St t2ta}
 \pt\|v_1\|_{L^p_\ta(0,\ta(T');\dot B^s_{q,2})}^p
 \pn= \int_0^{T'} \|v_1(\ta(t))\|_{\dot B^s_{q,2}}^p e^{\s(t)}dt
 \pr= \int_0^{T'} \|\T^{c(t)}S_{-1}^{\s(t)}v_1(\ta(t))\|_{\dot B^s_{q,2}}^p dt
 = \|u_1-\T^{c(t)}S_{-1}^{\s(t)}W\|_{L^p_t(0,T';\dot B^s_{q,2})}^p,}
and similarly for $v_2$ and in $L^\I\cH$. Hence putting $T'=\ta^{-1}(\y_l)<2e^{-\s(0)}\y_l$, 
\EQ{
 \|v\|_{\St\cap L^\I\cH(0,\y_l)}
 \le \|\vec w\|_{\St\cap L^\I\cH(0,T')}+\|(\T^{c}\cS^{\s}-\T^{c(0)}\cS^{\s(0)})\vec W\|_{\St\cap L^\I\cH(0,T')},}
and the last norm is bounded by 
\EQ{
 \pt\|(\T^{c-c(0)}-I)\cS^{\s}\vec W\|_{\St\cap L^\I\cH(0,T')} + \|(\cS^{\s-\s(0)}-I)\cS^{\s(0)}\vec W\|_{\St\cap L^\I\cH(0,T')}
 \pr\lec \|e^\s|c-c(0)|+|\s-\s(0)|\|_{L^\I(0,T')} \lec \de,}
which concludes the estimate on $v$. 

For the difference, the same change of variable as in \eqref{St t2ta} yields for $T'\in(0,T)$
\EQ{
 \|v^1-v\|_{\St\cap L^\I\cH(0,\ta(T'))}
 =\|\vec u^1-\vec u\|_{\St\cap L^\I\cH(0,T')}.}
Since $\|u\|_{\St(0,\ta^{-1}(\y_l))}+\|\vec u(0)-\vec u^1(0)\|_\cH\ll 1$, we have $T^1>\ta^{-1}(\y_l)$ and 
\EQ{
 \|\vec u^1-\vec u\|_{\St\cap L^\I\cH(0,\ta^{-1}(\y_l))} \lec \|\vec u^1(0)-\vec u(0)\|_\cH,}
which leads to the estimate on $v^1-v$. 
\end{proof}

\begin{proof}[Proof of Lemma \ref{lem:inst}]
Let $(\s,c):[0,T)\to\R^{1+d}$ be the modulation for $u^0$ defined by 
\EQ{
 (\s(t),c(t))=(\ti \s(\vec u^0(t)),\ti c(\vec u^0(t))),}
and let $\ta:[0,T)\to(0,\I)$ be the rescaled time variable for $u^0$ defined by
\EQ{
 \frac{d\ta}{dt}=e^{\s(t)}, \pq \ta(0)=0.}
Let $\vec u^1\in\Sol([0,T^1))$ be the solution with the initial data 
\EQ{
 \vec u^1(0)=\vec u^0(0)+\T^y\cS^s \fy.}
We use the same coordinates for $u^0$ and $u^1$ by putting for $j=0,1$ and at each $t$
\EQ{
 \vec u^j=\Psi_{\s,c}(v^j)=\Phi_{\s,c}(\la^j,\ga^j).} 
For the initial perturbation, we have, using \eqref{est tisc}, 
\EQ{
 \pt|\diff\la^\pa(0)-\LR{\fy|\ro}|
 =|\LR{(\cS^{s-\s(0)}\T^{e^s(y-c(0))}-I)\fy|\ro}|
 \pr\lec (|s-\s(0)|+e^s|y-c(0)|)\|\fy\|_\cH
 \pr\lec \|(\Psi_{s,y})^{-1}(\vec u^0(0))\|_\cH \|\fy\|_\cH
 \lec \ig \|\diff v^\pa(0)\|_\cH,}
which, together with \eqref{init pert}, implies
\EQ{
 C_I\ig\|\diff v^\pa(0)\|_\cH < 2|\diff\la^\pa_+(0)|,}
if $C_I$ is large and $\ig_I$ is small enough. 

Since the modulation $(\s,c)$ was chosen for $u^0$, we have 
\EQ{
 \|v^0\|_{L^\I(0,T;\cH)} \sim \sup_{0<t<T}d_W(\vec u^0)=:\de < \ig^3.}
For the Strichartz norms, Lemma \ref{lem:LUS} implies 
\EQ{ \label{St v}
 \|v^0\|_{\St(\ta_0<\ta<\ta_0+\y_l)} \lec \de,
 \pq \|\diff v^\pa\|_{\St(\ta_0<\ta<\ta_0+\y_l)} \lec \|\diff v^\pa(\ta_0)\|_\cH,}
as long as $\|\diff v^\pa\|_\cH$ remains small, while \eqref{eq scta} implies 
\EQ{
 |\ta_0-\ta_1|\lec \de^{-1} \implies |\s(\ta_0)-\s(\ta_1)|+|e^{\s(\ta_0)}[c(\ta_0)-c(\ta_1)]| \lec \de|\ta_0-\ta_1|.}
In particular, we have $\ta(t)\nearrow\I$ as $t\nearrow T$, since otherwise $|\s|$ is bounded as $t\nearrow T$, and so, if $T<\I$ then $\|u^0\|_{\St(0,T)}<\I$ contradicting the blowup at $T$, and if $T=\I$ then the boundedness of $\dot\ta=e^{\s}$ implies that $\ta\to\I$. 

In the following, we regard all the dynamical variables as functions of $\ta$ rather than $t$, unless explicitly specified. 
We have the equations for the difference
\EQ{ \label{eqdiff}
 \pt \p_\ta\diff\la^\pa_1=\diff\la^\pa_2 + \s_\ta(\al^1+\diff\la^\pa_1) + e^\s c_\ta\cdot \mu^1,
 \pr \p_\ta\diff\la^\pa_2=k^2\diff\la^\pa_1 + \diff{\LR{N(v^\pa_1)-Z_2\ga^\pa_2|\ro}}, 
 \pr \p_\ta(\al^1,\mu^1) = \LR{\diff\ga^\pa_2|(S_0',\T')\ro} - \LR{Z_1\diff v^\pa_1|(S_0',\T')\ro},
 \pr \p_\ta\diff\ga^\pa = J\cL\diff\ga^\pa + P_\perp[\diff{\U N(v^\pa_1)}- Z\diff{ v^\pa}],}
Also remember that $(\al^0,\mu^0)=\LR{v^0_1|(S_0',\T')\ro}=0$ and so 
\EQ{
 (\al^1,\mu^1)=\diff(\al^\pa,\mu^\pa)=\LR{\diff\ga^\pa_1|(S_0',\T')\ro}.}
Hence the third equation follows from the fourth one in \eqref{eqdiff}. 
By the assumption, 
\EQ{
 \de < \ig^3, \pq \|\diff v^\pa(0)\|_\cH < \ig^6.}
Suppose that for some $\ta_0>0$ we have 
\EQ{ \label{cone cond}
 \pt \|\diff v^\pa\|_{L^\I(0,\ta_0;\cH)} < \ig^3,\pq \ig^2|\diff\la^\pa_-(\ta_0)|+\ig\nu(\ta_0)<|\diff\la^\pa_+(\ta_0)|,}
where we put 
\EQ{ \label{def nu}
 \nu(\ta):=\sqrt{\ig^2|(\al^1,\mu^1)|^2+\LR{\cL\ga|\ga}}.} 
Choosing $C_I>1$ large enough, we have \eqref{cone cond} at $\ta_0=0$. We will prove that the second condition of \eqref{cone cond} is preserved until the first one is broken.

The linearized energy in \eqref{def E} implies that at each time 
\EQ{
 \|\diff\ga^\pa\|_\cH \gec \nu \gec \ig \|\diff\ga^\pa_1\|_{\dot H^1} + \|\diff\ga^\pa_2\|_{L^2} + \ig|(\al^1,\mu^1)|.}
Lemma \ref{lem:LUS} implies that $u^1$ exists at least for $\ta<\ta_0+\y_l$ and 
\EQ{
 \|v^0\|_{\St\cap L^\I\cH(\ta_0,\ta_0+\y_l)} \lec \de,  \pq \|\diff v^\pa\|_{\St\cap L^\I\cH(\ta_0,\ta_0+\y_l)} \lec \|\diff v^\pa(\ta_0)\|_\cH < \ig^3.}
Using \eqref{eq scta} as well, we derive from the difference equations 
\EQ{
 \pt |(\p_\ta\mp k)\diff\la^\pa_\pm| \lec \de\|\diff v^\pa\|_\cH + \de|(\al^1,\mu^1)|+\|\diff v^\pa\|_\cH^2 \lec \ig^3|\diff\la^\pa|+\ig^2\nu,
 \pr |\p_\ta(\al^1,\mu^1)| \lec \|\diff\ga^\pa_2\|_{L^2}+\de\|\diff v^\pa\|_\cH.}

To control $\diff\ga^\pa$, we use the linearized energy identity 
\EQ{
 \pt\p_\ta\LR{\cL\diff\ga^\pa|\diff\ga^\pa}/2
 =\LR{\diff{\U N(v^\pa_1)}- Z\diff v^\pa|\cL\diff\ga^\pa}
 \pr=\LR{\diff N(v^\pa_1)|\diff\ga^\pa_2}
 +e^\s c_\ta[\LR{L_+\na\ro|\diff\ga^\pa_1}\diff\la^\pa_1+\LR{\na\pot|(\diff\ga^\pa_1)^2/2}+\LR{\na\ro|\diff\ga^\pa_2}\diff\la^\pa_2]
 \pr\pq\pq -\s_\ta[\LR{L_+ S_{-1}'\ro|\diff\ga^\pa_1}\diff\la^\pa_1
 +\LR{S_{2-d/2}'\pot|(\diff\ga^\pa_1)^2/2} 
 + \LR{S_0'\ro|\diff\ga^\pa_2}\diff\la^\pa_2],}
where $\pot:=f''(W)$. 
The terms on the right except for the first one are simply bounded by $\de\|\diff\ga^\pa\|_\cH\|\diff{ v}^\pa\|_\cH$.  The nonlinear term can be bounded only via $\ta$-integral. For any interval $I=(\ta_0,\ta_1)\subset[0,\I)$ with $|I|<\y_l$, we have, 
\EQ{
 \int_I|\LR{\diff N(v^\pa_1)|\diff\ga^\pa_2}|d\ta
 \lec \|\diff\ga^\pa_2\|_{L^\I_\ta(I;L^2_x)}\|\diff N(v^\pa_1)\|_{L^1_\ta(I;L^2_x)}.}
Since 
\EQ{
 \diff N(v^\pa_1)=\int_0^1[f''(W+v^\te_1)-f''(W)]\diff v^\pa_1 d\te, \pq v^\te:=(1-\te)v^0+\te v^1,}
we have\footnote{Here for simplicity we use the exponents available only for $d\le 6$, but it is clear that we only need H\"older continuity of $f''$, i.e.~$2^*>2$, and so it can be easily modified for all dimensions $d\ge 3$.} 
\EQ{ \label{diff N est}
 \|\diff N(v^\pa_1)\|_{L^1L^2(I)}
 \pt\lec \sup_{0<\te<1} \|[f''(W+v^\te_1)-f''(W)]\diff v^\pa_1\|_{L^1_\ta L^2_x(I)}
 \pr\lec (\|v^0_1\|_{\St_p(I)}+\|\diff v^\pa_1\|_{\St_p(I)})\|\diff v^\pa_1\|_{\St_p(I)} 
 \pr\lec (\de+\|\diff v^\pa(\ta_0)\|_\cH) \|\diff v^\pa(\ta_0)\|_{\cH},}
where we used Lemma \ref{lem:LUS} in the last step. We thus obtain 
\EQ{
 |[\LR{\cL\diff\ga^\pa|\diff\ga^\pa}]_{\ta_0}^{\ta_1}| \lec \ig^3\|\diff\ga^\pa\|_{L^\I_\ta(\cH)}\|\diff{ v}^\pa(\ta_0)\|_{\cH}.}
Combining this with the estimate on $(\al^1,\mu^1)$ yields  
\EQ{ \label{nu incest}
 [\nu^2]_{\ta_0}^{\ta_1} \pt\lec \ig\nu[\|\diff\ga^\pa_2\|_{L^2_x} + \de\|\diff v^\pa(\ta_0)\|_\cH] 
  + \ig^3\|\diff\ga^\pa\|_{L^\I_\ta \cH}\|\diff{ v}^\pa(\ta_0)\|_{\cH}
 \pr\lec \ig\|\nu\|_{L^\I_\ta}^2 + \ig^2\|\nu\|_{L^\I_\ta}\|\diff{ v}^\pa(\ta_0)\|_{\cH}
 \pn\lec \ig\|\nu\|_{L^\I_\ta}^2 + \ig^3 \|\diff\la^\pa\|_{L^\I_\ta}^2.}
Thus we obtain for $\ta_0<\ta<\ta_0+\y_l$, 
\EQ{ \label{diff est1}
 \pt \nu \le (1+C\ig)\nu(\ta_0)+C\ig^{3/2}\|\diff\la^\pa\|_{L^\I_\ta},
 \pr |(\p_\ta\mp k)\diff\la^\pa_\pm| \lec \ig^3|\diff\la^\pa|+\ig^2\nu.}
Suppose that $\|\diff\la^\pa\|_{L^\I_\ta}\le M|\diff\la^\pa(\ta_0)|$. Then using that $\ig\nu(\ta_0)<|\diff\la^\pa_+(\ta_0)|$, 
\EQ{
 |(\p_\ta\mp k)\diff\la^\pa_\pm| \lec [\ig^3M+\ig(1+\ig^{3/2}M)]|\diff\la^\pa(\ta_0)| \lec \ig |\diff\la^\pa(\ta_0)|,}
provided that $\ig^{3/2}M\ll 1$. Hence by continuity in $\ta$, we deduce that 
\EQ{
 |\diff\la^\pa|\lec|\diff\la^\pa(\ta_0)|,} 
for $\ta_0<\ta<\ta_0+\y_l$, and plugging this into \eqref{diff est1}, 
\EQ{ \label{diff est2}
 \pt \nu \le (1+C\ig) \nu(\ta_0) + C\ig^{3/2} |\diff\la^\pa(\ta_0)|,
 \pr |\diff\la^\pa_\pm-e^{\pm k(\ta-\ta_0)}\diff\la^\pa_\pm(\ta_0)| \le C\ig|\diff\la^\pa_+(\ta_0)|.}
In particular, if $0<\ig\ll 1$ then
\EQ{
  [|\diff\la^\pa_+|]_{\ta_0}^{\ta_0+\y_l} \pt> (e^{(k-C\ig)\y_l}-1)|\diff\la^\pa_+(\ta_0)| 
 \pr\gg \ig|\diff\la^\pa_+(\ta_0)| \gec [\ig^2|\diff\la^\pa_-(\ta)|+\ig\nu]_{\ta_0}^{\ta_0+\y_l}.}
Hence the last condition of \eqref{cone cond} is transferred to $\ta=\ta_0+\y_l$. Therefore by iteration, \eqref{cone cond} and the above estimates hold with $\ta_0=n\y_l$ for integers $0\le n<N$, where either $N=\I$ or $2\le N<\I$ and 
$\|v\|_{L^\I_\ta(N\y_l,(N+1)\y_l;\cH)} \ge \ig^3$. 

We can improve the above estimates as follows. First from the second estimate for $\la_+$ in \eqref{diff est2}, we have for all $0\le n<N$, 
\EQ{
 \pt e^{(k-C\ig)\y_l}<\diff\la^\pa_+((n+1)\y_l)/\diff\la^\pa_+(n\y_l)<e^{(k+C\ig)\y_l},}
which precludes the case $N=\I$. Plugging this exponential growth in the same estimate for $\la_-$, we obtain for $0\le n\le N$
\EQ{ 
 |\diff\la^\pa_-(n\y_l)| \le e^{-(k-C\ig)n\y_l}|\diff\la^\pa_-(0)|   + C\ig|\diff\la^\pa_+(n\y_l)|.}
Using those two in the first estimate of \eqref{diff est2}, we obtain of $0\le n\le N$
\EQ{
 \nu(n\y_l) \le e^{C\ig n\y_l}\nu(0)+C\ig^{3/2}[|\diff\la^\pa_-(0)|+|\diff\la^\pa_+(n\y_l)|].}
Iterating once again, we obtain continuous versions for $0<\ta<N\y_l$
\EQ{
 \pt e^{(k-C\ig)\ta} \lec \diff\la^\pa_+(\ta)/\diff\la^\pa_+(0) \lec e^{(k+C\ig)\ta}, 
 \pr |\diff\la^\pa_-(\ta)| \lec e^{-(k-C\ig)\ta}|\diff\la^\pa_-(0)|   + \ig|\diff\la^\pa_+(\ta)|, 
 \pr \|\nu\|_{L^\I(0,\ta)} \lec e^{C\ig \ta}\nu(0) + \ig^{3/2}[|\diff\la^\pa_-(0)|+|\diff\la^\pa_+(\ta)|].}
In particular, we have
\EQ{
 \|\diff v^\pa\|_{L^\I(0,\ta;\cH)} \pt\lec \|\diff\la^\pa\|_{L^\I(0,\ta)}+\ig^{-1}\|\nu\|_{L^\I(0,\ta)} 
 \pr\lec |\diff\la^\pa_+(\ta)| + \ig^{-1}e^{C\ig\ta}\|\diff v^\pa(0)\|_\cH.}
Since $\|\diff v^\pa(0)\|_\cH<\ig^6$, the last term is bounded by $\ig^{4}$ for $e^{C\ig\ta}< \ig^{-1}$, while 
\EQ{
 |\diff\la^\pa_+(\ta)|\gec \left[e^{C\ig\ta}\right]^{(k-C\ig)/C\ig}\ig^{2}\|\diff v^\pa(0)\|_\cH \gg \ig^{-10}e^{C\ig\ta}\|\diff v^\pa(0)\|_\cH,}
for $e^{C\ig\ta}\ge\ig^{-1}$, choosing $\ig_I\ll k$. Hence for some $\ta_I\in(N\y_l,(N+1)\y_l)$ we have
\EQ{
 \pt \ig^3 =\|\diff v^\pa(\ta_I)\|_\cH=\|\diff v^\pa\|_{L^\I(0,\ta_I;\cH)}\sim|\diff\la^\pa_+(\ta_I)|\sim|\diff\la^\pa_+(\ta_I/2)|,
 \pr \ig^4 \gec \|\diff\la^\pa_-\|_{L^\I(0,\ta_I)}+\|\nu/\ig\|_{L^\I(0,\ta_I)} 
 \prq\gec \|\diff\la^\pa_-\|_{L^\I(0,\ta_I)} + \|\diff\ga^\pa\|_{L^\I(0,\ta_I;\cH)} + \|(\al^1,\mu^1)\|_{L^\I(0,\ta_I)}.}
$\ta_I<\I$ means that $t_I:=\ta^{-1}(\ta_I)\in(0,T)$ and $\vec u^1\in\Sol([0,t_I])$. 
For the last statement of the lemma, suppose $d_W(\vec u^0(t_I))\ll\ig^3$. Then $\|\ga^1(t_I)\|_\cH\lec\ig^4+d_W(\vec u^0(t_I))\ll\ig^3$ and via \eqref{est tisc} we have $|\diff{\ti\s(\vec u^\pa(t_I))}|+e^{\s(t_I)}|\diff{\ti c(\vec u^\pa(t_I))}|\lec \|\ga^1(t_I)\|_\cH\ll\ig^3$, and so 
\EQ{
 d_W(\vec u^1(t_I))\gec \|\vec v^1(t_I)\|_\cH-C\|\sT_{\vec u^0(t_I)}\vec W-\sT_{\vec u^1(t_I)}\vec W\|_\cH \gec \ig^3.}
Therefore $d_W(\vec u^0(t_I))+d_W(\vec u^1(t_I))\sim\ig^3$ in any case (the upper bound is obvious). 
\end{proof}

\subsection{Construction of the manifold}
Now we are ready to prove Theorem \ref{thm:cmd0}. Let $0<\de'\ll\de_+\ll\de_\Phi$, $\la_+\in\B^+_{\de_+}$, $\fy\in\B'_{\de'}$ and $\vec u\in\Sol([0,T))$ with $\vec u(0)=\la_+g_++\fy$. Choosing $\de_+$ small enough ensures that 
\EQ{
 E(u)<E(W)+\e_*^2, \pq \de_+ \ll \de_*,}
where $\de_* \gg \e_*>0$ are the small constants in the one-pass theorem \cite[Theorem 5.1]{CNW-nonrad}. For each fixed $\fy$, we divide the set $\B^+_{\de_+}$ for $\la_+$ according to the behavior of $\vec u$. Let $A_\pm$ be the totality of $\la_+\in\B^+_{\de_+}$ for which there exists $t_0\in[0,T)$ such that 
\EQ{ \label{eject cond}
 \pt \de_*^2>d_W(\vec u(t_0))^2>2(E(u)-E(W)), 
 \pr \p_td_W(\vec u(t_0))> 0,\pq \pm \ti\la_+(\vec u(t_0))>0.}
Then the ejection lemma \cite[Lemma 3.2]{CNW-nonrad} followed by the one-pass theorem \cite[Theorem 5.1]{CNW-nonrad} implies the following. If $\la_+\in A_\pm$ then the solution $\vec u$ is exponentially ejected out of the small neighborhood $d_W<d_W(\vec u(t_0))$ and never comes back again. Moreover, if $\la_+\in A_+$ then $u$ blows up in $t>t_0$, and if $\la_+\in A_-$ then $u$ scatters to $0$. 
By the local wellposedness of (CW) in $\cH$, both $A_\pm$ are open. To see that both are non-empty, consider the case $\de_+\sim|\la_+|\gg\|\fy\|_\cH$. Let 
\EQ{
 \vec u(t)=\sT_{\vec u(t)}(\vec W+\ti\la(t)\ro+\ga(t)), \pq \ga(t)\perp\ro.}
Then \eqref{est tisc} implies that 
\EQ{
 |\ti\la_+(0)-\la_+|+|\ti\la_-(0)| \lec \|\fy\|_\cH \ll |\la_+|,}
and, by definition of $d_W$, we have at $t=0$ 
\EQ{
 \pt d_W(\vec u)^2=E(u)-E(W)+k^2|\ti\la_1|^2, 
 \pr e^{-\ti\s}\p_t d_W(\vec u)^2=\frac{k^2}{2}(|\ti\la_+|^2-|\ti\la_-|^2)+O(\|\ga\|_\cH|\ti\la_1|^2),
 \pr E(u)-E(W)=-k\ti\la_+\ti\la_-+\frac{1}{2}\LR{\cL\ga|\ga}-o(|\ti\la|^2+\|\ga\|_\cH^2).}
Hence we deduce 
\EQ{
 |E(u)-E(W)|\ll |\la_+|^2 \sim d_W(\vec u(0))^2 \sim e^{-\ti\s}\p_td_W(\vec u(0))^2>0,}
and thus $\la_+\in A_{\sign\la_+}$ for $|\la_+|\sim\de_+$. In particular, both $A_\pm$ are non-empty, which implies that the remainder 
\EQ{
 A_0:=\B^+_{\de_+}\setminus(A_+\cup A_-)} 
is also non-empty. Every solution $u$ for $\la_+\in A_0$ must violate \eqref{eject cond} for each $t\in[0,T)$, to avoid the ejection. Hence at each $t\in(0,T)$, one of the following holds 
\begin{enumerate}
\item $\p_t d_W(\vec u(t))\le 0$ \label{approaching}
\item $d_W(\vec u(t))^2\le 2(E(u)-E(W)) \le 2d_W(\vec u(0))^2$ \label{trapped}
\item $d_W(\vec u(t))\ge \de_*$, \label{ejected}
\end{enumerate}
where the last condition in \eqref{eject cond} is not considered, since it is implied by the others, due to the ejection lemma. 
Since $d_W(\vec u(0))\lec \de_+\ll\de_*$, either \eqref{approaching} or \eqref{trapped} holds for small $t>0$. Since the ejection lemma can be applied with $\p_t d_W(\vec u)=0$ as well, we deduce that $d_W(\vec u(t))$ is strictly decreasing in $t>0$ until \eqref{trapped} is satisfied, where $\vec u$ spends its remaining life (hence never reaching \eqref{ejected}). 

Choosing $\de_+\ll\ig_I^6$ small enough, we can ensure that 
\EQ{
  d_W(\vec u(0))^2 + |\la_+| \ll \ig_I^6}
for all $(\la_+,\fy)\in\B^+_{\de_+}\times\B'_{\de'}$. If there are more than one $\la_+\in A_0$ for the same $\fy$, say $\la_+^0\not=\la_+^1$, then we can apply Lemma \ref{lem:inst} to the corresponding solutions with the initial data $\vec u^j(0)=\vec W+\la_+^jg_++\fy$ and with $\ig\in(0,\ig_I]$ satisfying 
\EQ{
 d_W(\vec u^j(0))^2+|\diff\la^\pa_+|\ll\ig^6.} 
Then its conclusion together with \eqref{trapped} leads to a contradiction 
\EQ{
 \ig^3 \sim d_W(\vec u^0(t_I))+ d_W(\vec u^1(t_I)) 
 \lec d_W(\vec u^0(0))+d_W(\vec u^1(0)) \ll \ig^3.}
Thus we can define the functional $m_+$ by putting $A_0=\{m_+(\fy)\}$, and then $A_+=(m_+(\fy),\de_+)$, $A_-=(-\de_+,m_+(\fy))$. The same reasoning as above implies that we can never apply Lemma \ref{lem:inst} for different $\fy^0,\fy^1\in\B'_{\de'}$ with $\vec u^j(0)=\vec W+m_+(\fy^j)g_++\fy^j$ and $d_W^2(\vec u^j(0))+\|\diff\fy^\pa\|_\cH\ll\ig^6$. Therefore 
\EQ{
 \max_{j=0,1}\left[ |m_+(\fy^j)|+\|\fy^j\|_\cH \right] \ll \ig^6 <\ig_I^6 \implies 
 |\diff m_+(\fy^\pa)| \lec \ig\|\diff\fy^\pa\|_\cH,}
which implies the Lipschitz continuity \eqref{tan of La}. Since $m_+(0)=0$ is obvious, it also implies that $|m_+(\fy)|=o(\|\fy\|_\cH)$. In particular, we may restrict both the domain and the range of $m_+$ to have the same radius $\de_m$ as in the statement of the theorem, though there is no merit for that besides reducing the number of parameters. 

The trichotomic dynamics readily follows from the ejection lemma and the one-pass theorem in \cite{CNW-nonrad}. The estimate on the $L^{2^*}$ distance in \eqref{bup} is derived from the bound $K(u(t))\le-\ka(\de_*)$ in the variational region \cite[Lemma 4.1]{CNW-nonrad} as follows. Choose $\de_m\ll\ka(\de_*)$. Let $u$ be a solution in the case \eqref{bup} and let $t$ be after the ejection, namely $d_W(\vec u(t))>\de_*$. Let $u(t)=\psi+\fy$ with $\psi=\T^c S_{-1}^\s W$ for some $(\s,c)\in\R^{1+d}$. Then 
\EQ{
 \ka(\de_*)\le -K(u(t))\pt=-K(\psi)-\|\na\fy\|_{L^2}^2+2\LR{\De\psi|\fy}+\|\psi+\fy\|_{L^{2^*}}-\|\psi\|_{L^{2^*}}
 \pr=-\|\na\fy\|_{L^2}^2-2\LR{\psi^{2^*-1}|\fy}+\|\psi+\fy\|_{L^{2^*}}-\|\psi\|_{L^{2^*}},}
which implies $\ka(\de_*)\lec\|\fy\|_{L^{2^*}}$ and so $\dist_{L^{2^*}}(u(t),\Static(W)_1)\gec\ka(\de_*)\gg\de_m$. 
Thus it only remains to prove that $m_+$ is $C^1$. 

\subsection{Smoothness of the manifold}
Next we prove that the center-stable manifold obtained above is at least $C^1$ in $\cH$. Let $\vec u^0\in\Sol([0,T))$ be a solution on the manifold with the modulation and the rescaled variables
\EQ{
 (\s,c)=(\ti\s,\ti c)(\vec u^0(t)),\pq \frac{d\ta}{dt}=e^{\s(t)}, \pq \ta(0)=0.} 
We consider solutions $\vec u^1$ close to $\vec u^0$, in the form  
\EQ{
 \pt \vec u^1(t) = \vec u^0(t)+\sT_{\vec u^0(t)} h \ck v(t,h), \pq \ck v(t,h)=\ck\la(t,h)\ro+\ck\ga(t,h),\pq \ck\ga\perp\ro,
 \pr \la_-^1(0)g_-+\ga^1(0):=\fy^0+h\fy',\pq (h\to 0),}
for each $\fy^0\in \B'_{\de_m}$ and $\fy'\in\B'_\I$. Let 
$\vec u^j = \Phi_{\s,c}(\la^j,\ga^j)$, then $(\ck\la,\ck\ga)=\diff(\la^\pa,\ga^\pa)/h$. 
To put both $\vec u^j$ on the manifold, we need 
\EQ{
 \la^0_+(0)=m_+(\fy^0), \pq \la^1_+(0)=m_+(\fy^0+h\fy').}
Let $(\ck\al,\ck\mu):=\LR{\ck\ga_1|(S_0',\T')\ro}$. 
From the equation \eqref{eqdiff} of $\diff{\vec u}^\pa$, we obtain 
\EQ{
 \pt \p_\ta\ck\la_1=\ck\la_2 + \s_\ta(\ck\al+\ck\la_1) + e^\s c_\ta \ck\mu,
 \pr \p_\ta\ck\la_2=k^2\ck\la_1 + \LR{\diff N(v^\pa_1)/h-Z_2\ck\ga_2|\ro},
 \pr \p_\ta(\ck\al,\ck\mu)=\LR{\ck\ga_2-Z_1\ck v_1|(S_0',\T')\ro},
 \pr \p_\ta\ck\ga=J\cL\ck\ga + P_\perp[\diff N(v^\pa_1)/h- Z\ck v].}
By Lemma \ref{lem:LUS}, $\ck v$ is bounded in $L^\I\cH\cap\St$ as $h\to 0$ locally uniformly on $0<\ta<\I$. Moreover, the small Lipschitz property of $m_+$ implies that 
\EQ{
 |\ck\la_+(0,h)|\ll \|\fy'\|_\cH.} 
Hence there is a sequence $h\to 0$ along which $\ck\la_+(0,h)$ converges to some $\la_+^\I\in\R$, and the limit of $\ck v=\ck\la\ro+\ck\ga\to v'=\la'\ro+\ga'$ satisfies the linearized equation 
\EQ{ \label{Leq}
 \pt \p_\ta\la'_1=\la'_2 + \s_\ta\la'_1 + \LR{\ga'_1|Z_1\ro},
 \pr \p_\ta\la'_2=k^2\la'_1 + \LR{N'(v_1^0)v'_1-Z_2\ga'_2|\ro},
 \pr \p_\ta\ga'=J\cL\ga'+P_\perp[N'(v_1^0)v'_1- Z  v'],}
where $N'(v_1^0):=f''(W+v_1^0)-f''(W)$, with the initial data 
$\la'(0)\ro+\ga'(0)=\la_+^\I g_++\fy'$. 
Regarding $(\la',\ga')$ as the unknown variables, this system is almost the same as \eqref{eqdiff} for $\diff{ v^\pa}$, except for $N'(v_1^0)v'_1$, which is the leading term of the nonlinearity in the latter system. 
In particular, we can show, by the same argument\footnote{Note that the proof of Lemma \ref{lem:inst} did not use any particular structure of $\diff N(v^\pa)$.} as for Lemma \ref{lem:inst}, that unless $|\la'_+(0)|$ is much smaller than $|\la'_-(0)|+\|\ga'(0)\|_\cH$, we have that $\la_+'(\ta)$ grows exponentially and so becomes dominant over the other components. More precisely, 
\begin{lem} \label{lem:instL}
There exists a constant $1<C_D<\I$ with the following property. 
Let $\vec u^0\in\Sol([0,T))$ satisfy\footnote{The constant $\ig_I$ is chosen here to be the same as in Lemma \ref{lem:inst}, just for convenience. It does not mean that the admissible range of $\ig_I$ is exactly the same for these two lemmas.} $\|d_W(\vec u^0)\|_{L^\I(0,T)}<\ig^3 \le \ig_I^3$. Let 
\EQ{
 (\s,c)=(\ti \s,\ti c)(\vec u),\pq v^0=\Psi_{\s,c}^{-1}(\vec u^0),\pq \frac{d\ta}{dt}=e^{\s(t)},\pq  \ta(0)=0.} 
Then equation \eqref{Leq} has a unique global solution $(\la'(\ta),\ga'(\ta)):[0,\I)\to\R^2\times\cH_\perp$ for any initial data $(\la'(0),\ga'(0))\in\R^{2}\times\cH_\perp$.  Moreover, if 
\EQ{ \label{cone cond L}
 C_D \ig \|(\la'_-(\ta_0),\ga'(\ta_0))\|_{\R\times\cH} \le |\la'_+(\ta_0)| }
at some $\ta_0\ge 0$, then there exists $\ta_I\in(\ta_0,\I)$, such that for all $\ta>\ta_I$ we have 
\EQ{ \label{grow la'}
 C_D|\la'_+(\ta)| > |\la'_+(\ta_I)|e^{k(\ta-\ta_I)/2} + \|(\la'_-(\ta),\ga'(\ta))\|_{\R\times\cH}/\ig.}
On the other hand, if \eqref{cone cond L} fails for all $\ta_0\ge 0$, then for all $\ta>0$
\EQ{ \label{slow grow bd}
 |\la'_+(\ta)| \lec \ig\|(\la_-'(\ta),\ga'(\ta))\|_{\R\times\cH} \lec e^{C\ig\ta}\|(\la_-'(0),\ga'(0))\|_{\R\times\cH}.}
\end{lem}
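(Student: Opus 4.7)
The plan is to mirror the proof of the ignition Lemma \ref{lem:inst} almost verbatim, observing that the linearized system \eqref{Leq} differs from the difference equation \eqref{eqdiff} only in that the quadratic remainder $\diff N(v^\pa_1)$ is replaced by its linearization $N'(v^0_1)v'_1$. Global existence and uniqueness for $(\la',\ga')$ is essentially free: \eqref{Leq} is linear in $(\la',\ga')$ with coefficients depending on $v^0,\s_\ta,c_\ta$, and the hypothesis $\|d_W(\vec u^0)\|_{L^\I(0,T)}<\ig^3$ together with $\ta(T)=\I$ (established inside the proof of Lemma \ref{lem:inst}) and Lemma \ref{lem:LUS} furnishes the Strichartz control on unit $\ta$-intervals needed for a standard Picard iteration, which iterates indefinitely since there is no nonlinearity to break.

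Next I would run the same energy computation as in Lemma \ref{lem:inst} with the surrogate
\begin{equation*}
\nu'(\ta)^2 := \ig^2|(\al',\mu')|^2 + \LR{\cL\ga'|\ga'}, \pq (\al',\mu'):=\LR{\ga'_1|(S_0',\T')\ro}.
\end{equation*}
The only estimate that needs re-derivation is the linear analogue of \eqref{diff N est}:
\begin{equation*}
\|N'(v^0_1)v'_1\|_{L^1L^2(I)} \lec \|v^0_1\|_{\St_p(I)}\|v'_1\|_{\St_p(I)} \lec \ig^3\|v'(\ta_0)\|_\cH
\end{equation*}
on subintervals $I\subset[\ta_0,\ta_0+\y_l]$, which follows from the H\"older continuity of $f''$ and Lemma \ref{lem:LUS}. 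This produces, exactly as in \eqref{diff est1}--\eqref{diff est2},
\begin{equation*}
\nu'(\ta)\le(1+C\ig)\nu'(\ta_0)+C\ig^{3/2}\|\la'\|_{L^\I}, \pq |(\p_\ta\mp k)\la'_\pm|\lec \ig^3|\la'|+\ig^2\nu'.
\end{equation*}

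Under the cone condition \eqref{cone cond L} at $\ta_0$, choosing $C_D$ sufficiently large makes the same continuity argument propagate \eqref{cone cond L} to $\ta_0+\y_l$ with $|\la'_+|$ amplified by at least $e^{(k-C\ig)\y_l}>1$. In contrast to Lemma \ref{lem:inst} there is no stopping time in the linear regime, so the induction runs for all $\ta\ge\ta_0$ and yields $|\la'_+(\ta)|\sim e^{k(\ta-\ta_0)}|\la'_+(\ta_0)|$ (with implicit constants close to $1$), $|\la'_-(\ta)|\lec e^{-(k-C\ig)(\ta-\ta_0)}|\la'_-(\ta_0)|+\ig|\la'_+(\ta)|$, and $\nu'(\ta)\lec e^{C\ig(\ta-\ta_0)}\nu'(\ta_0)+\ig^{3/2}[|\la'_-(\ta_0)|+|\la'_+(\ta)|]$. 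Picking $\ta_I$ with $\ta_I-\ta_0\gg\ig^{-1}\log\ig^{-1}$ makes the decaying initial contributions negligible compared to $\ig|\la'_+(\ta)|$, producing \eqref{grow la'}. In the opposite case — \eqref{cone cond L} fails at every $\ta\ge 0$ — the first bound in \eqref{slow grow bd} is just the definition (for $C_D$ large), and substitution into the differential inequalities above gives a telescoped Gronwall estimate $\p_\ta[(\la'_-)^2+\nu'^2]\lec \ig[(\la'_-)^2+\nu'^2]$, hence the claimed $e^{C\ig\ta}$ growth.

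The hard part will be rigorously justifying the cone-preservation step in the linear regime, i.e.\ verifying that the exponential dichotomy of $J\cL$ survives the time-dependent perturbation generated by $v^0$ and $(\s_\ta,c_\ta)$ uniformly over arbitrarily long $\ta$-windows. Since the proof of Lemma \ref{lem:inst} used no structural feature of the quadratic remainder beyond its bound \eqref{diff N est}, this adaptation is essentially mechanical and the admissible range of $\ig$ remains the same up to absolute constants; the only genuinely new point is that we must extract the asymptotic dominance \eqref{grow la'} from the exponential growth without a nonlinear exit time, which is why $\ta_I$ is selected by the logarithmic threshold above rather than by hitting a fixed neighborhood boundary.
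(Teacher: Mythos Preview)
Your proposal is correct and takes essentially the same approach as the paper, which explicitly defers to the proof of Lemma~\ref{lem:inst} for the growth regime and only sketches the slow-growth bound~\eqref{slow grow bd}. The one minor presentational difference is that the paper iterates the weighted quantity $\nu'+\ig^{1/2}|\la'_-|$ on $\y_l$-intervals rather than writing a Gronwall-type differential inequality for $(\la'_-)^2+\nu'^2$; the two are equivalent, though strictly speaking $\nu'$ satisfies an integral (not pointwise) bound on each $\y_l$-window, so your ``$\p_\ta[\cdots]\lec\ig[\cdots]$'' should be read as a telescoped difference estimate.
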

\begin{proof}
We only sketch the proof for \eqref{slow grow bd}, since the rest is essentially the same as Lemma \ref{lem:inst}. In the same way as for \eqref{diff est1},  we obtain, for any $0<\ta_0<\ta<\ta_0+\y_l$, 
\EQ{
 \pt \nu(\ta) \le (1+C\ig)\nu(\ta_0)+C\ig^{3/2}|\la'(\ta_0)|, 
 \pr |(\p_\ta+k)\la'_-(\ta)| \lec \ig^3|\la'(\ta_0)|+\ig^2\nu(\ta_0),}
where $\nu(\ta):=\sqrt{\ig^2|(\al',\mu')|^2+\LR{\cL\ga'|\ga'}}$ and $(\al',\mu'):=\LR{\ga'_1|(\cS',\T')\ro}$. Using that $|\la_+'|\lec \ig|\la_-'|+\nu$, we deduce from the above estimate 
\EQ{
 \pt \nu(\ta)+\ig^{1/2}|\la'_-(\ta)| \lec e^{C\ig\ta}[\nu(0) + \ig^{1/2}|\la'_-(0)|],}
and so, using \eqref{def E}, we obtain \eqref{slow grow bd}.
\end{proof}

The above lemma implies that for each $(\la'_-(0),\ga'(0))\in\R\times\cH_\perp$, there is a unique $\ck m_+(\la'_-(0),\ga'(0))\in\R$ such that if $\la_+'(0)=\ck m_+$ then \eqref{cone cond L} is not satisfied at any $\ta_0\ge 0$, and if $\pm(\la_+'(0)-\ck m_+)>0$ then $\pm\la_+'(\ta)$ grows exponentially to $\I$. 
To see that $\ck\la_+(0,h)\to\ck m_+$, we apply Lemma \ref{lem:inst} to $\vec u^0$ and $\vec u^1$. Since $\vec u^1(0)\to\vec u^0(0)$ as $h\to 0$, the local wellposedness implies that $\vec u^1\to\vec u^0$ locally uniformly on $[0,T)$, and so does $(\ti\s(\vec u^1),\ti c(\vec u^1))$. Hence for any $S<\I$, for $|h|$ small enough we could apply Lemma \ref{lem:inst} to $\vec u^0$ and $\vec u^1$ starting at any $\ta\in[0,S]$, if we had 
\EQ{
 2\ig_I\|\diff{\vec v}^\pa(\ta)\|_\cH<|\diff\la^\pa_+(\ta)|,}
but its conclusion would contradict that both $\vec u^j$ are on $\M_0$ (specifically between \eqref{sink} and \eqref{ignited}). Hence for all $\ta>0$, as $h\to 0$ along the sequence, 
\EQ{
 \ig_I\gec \frac{|\ck\la_+(\ta,h)|}{|\ck\la_-(\ta,h)|+\|\ck\ga(\ta,h)\|_\cH} \to \frac{|\la'_+(\ta)|}{|\la'_-(\ta)|+\|\ga'(\ta)\|_\cH},}
which implies $\la'_+(0)=\ck m_+$, since otherwise the right hand side will grow in $\ta$ at least to $O(1/\ig_I)$. 

Therefore $\ck\la_+(0,h)\to \ck m_+(\la_-'(0),\ga'(0))$ as $h\to 0$, without restricting $h$ to a sequence. In other words, $m_+$ is G\^{a}teaux differentiable at $\fy^0$
\EQ{
 \ck m_+(\la_-'(0),\ga'(0))\pt=\lim_{h\to 0}\frac{m_+(\fy^0+h\fy')-m_+(\fy^0)}{h}
 \pn=m_+'(\fy^0)(\fy').}
The linearity of $m_+'$ on $\fy'$ is clear from the definition of $\ck m_+$, while its boundedness follows from the Lipschitz property of $m_+$. 

To show the continuity of $m_+'$ for $\fy^0$, take any sequence $\fy^0_n\to\fy^0$ in $\B'_{\de_m}$, let $\vec u^0_n$ be the solution starting from 
$\vec u^0_n(0)=\Psi_{\s(0),c(0)}(m_+(\fy^0_n)g_++\fy^0_n)$ 
and let $\vec v^0_n:=\Psi_{\s,c}^{-1}(\vec u^0_n)$. The local wellposedness implies that $\vec v^0_n\to\vec v^0$ in $L^\I_\ta\cH\cap \St_\ta(0,S)$ for any $S\in(0,\I)$. 
Let $\eqref{Leq}_n$ be the equation obtained by replacing $v^0$ with $v^0_n$ in \eqref{Leq}. 

For any small $\z>0$, Lemma \ref{lem:instL} allows one to choose $S\gg 1$ such that the solution of \eqref{Leq} with $\la_+'(0)=1$ and $(\la_-'(0),\ga'(0))=0$ satisfies 
\EQ{
 \pt C_D \ig_I \la_+'(\ta)>\|(\la_-'(\ta),\ga'(\ta))\|_{\R\times\cH} +e^{k\ta/2}/\z}
for all $\ta>S/2$. On the other hand, for any $\fy'\in\B'_1$, the solution of \eqref{Leq} with $\la_+'(0)=m_+'(\fy^0)\fy'$ and $\la_-'(0)g_-+\ga'(0)=\fy'$ satisfies 
\EQ{
 \|(\la'(\ta),\ga'(\ta))\|_{\R^2\times\cH} \lec \ig_I^{-1}e^{C\ig_I\ta}}
for all $\ta\ge 0$. Since $S\gg 1$ and $k\gg\ig_I$, combining these two estimates yields that the solution of \eqref{Leq} with $|\la_+'(0)-m_+'(\fy^0)\fy'|>\z$ and $\la_-'(0)g_-+\ga'(0)=\fy'$ satisfies 
\EQ{ \label{la+' dom}
  \ig_I |\la_+'(\ta)| \gec \|(\la_-'(\ta),\ga'(\ta))\|_{\R\times\cH} }
for all $\ta>S/2$. 

The local uniform convergence of $v^0_n$ implies that the solution of $\eqref{Leq}_n$ with the same initial data also satisfies \eqref{la+' dom} around $\ta=S$ for large $n$. 
Moreover, since $(\ti\s(\vec u^0_n),\ti c(\vec u^0_n))\to(\s,c)$ uniformly on $[0,2S]$ as $n\to\I$, we have the same estimate \eqref{la+' dom} around $\ta=S$ also in the coordinate associated with the solution $\vec u^0_n$, for large $n$. Then it implies that 
\EQ{
 |m_+'(\fy^0_n)\fy'-m_+'(\fy^0)\fy'|\le\z,}
for all $\fy'\in B'_1$. Hence $m_+'$ is continuous $\B'_{\de_m}\to(\R\times\cH_\perp)^*$. 
This concludes the proof of Theorem \ref{thm:cmd0}.

\section{Extension of the manifold and the 9-set dynamics}
\subsection{Extension by the backward flow}
By the maximal evolution, we can extend $\M_0$ to an invariant manifold: 
\EQ{ \label{def M1}
 \M_1:=\Cu\{\vec u(I) \mid \vec u(0)\in\M_0,\ \vec u \in\Sol(I)\} \supset \M_0.}$\M_1$ also inherits from $\M_0$ the invariance for $\T$ and $\cS$. By the property of $\M_0$, those solutions are eventually trapped by the ground state, namely 
\EQ{
 \limsup_{t\nearrow T_+}d_W(\vec u(t))^2 \le 2(E(u)-E(W)) \lec\de_m^2,}
where $T_+$ is the maximal existence time of $u$. 

Conversely, if a solution $u$ satisfies the above condition and $E(\vec u)-E(W)\ll\de_m^2$, then its orbit is included in $\M_1$. 
This is because every solution getting close enough to the ground states is classified by the trichotomy of Theorem \ref{thm:cmd0}, whereas those solutions which never approach the ground states have been classified into the 4 sets of scattering to $0$ and blowup away from the ground states in \cite{CNW-nonrad}. One may wonder what happens if a solution stays around $d_W(\vec u)\sim\de_m$, but such behavior is precluded by the ejection lemma \cite[Lemma 3.2]{CNW-nonrad} (applied in both time directions) under the energy constraint $E(u)-E(W)\ll\de_m^2$. 

For each point $\fy\in\M_1$, there is a small neighborhood $O\ni\fy$ and $T>0$ such that the nonlinear flow $U_N(T)$ maps $O$ into a small neighborhood of a point on $\M_0$, where the trichotomy holds. Then $O\cap \M_1$ is mapped onto $U_N(T)(O)\cap\M_0$. Since $U_N(T)$ is smooth on $O$, it implies that $\M_1$ is also a $C^1$ manifold of codimension $1$. 

The one-pass theorem in \cite{CNW-nonrad} implies that every solution on $\M_1\cap\M_1^\da$ satisfies $d_W(\vec u(t))\lec\de_m$ all over its life, and so it is essentially the same as the center manifold $\M_0\cap\M_0^\da$, and in particular with codimension $2$. The rest of $\M_1$ is split into two parts, scattering to $0$ or blowup away from the ground states, in the negative time direction. Each set is non-empty and relatively open in $\M_0$. 

Therefore, we have all $3\times 3$ combinations of dynamics in $t>0$ and in $t<0$: (1) blowup away from the ground states, (2) trapping by the ground states (or by $\M_0$ for $t>0$ and by $\M_0^\da$ for $t<0$), and (3) scattering to $0$. It was already shown in \cite{CNW-nonrad} that the combinations of (1) and (3) have non-empty interior. Moreover, those 9 sets exhaust all possible dynamics in the region $E(u)-E(W)\ll\min(\de_m,\e_*)^2$. Thus we obtain Theorem \ref{main thm1}. 

\medskip

Before going to the next step using the Lorentz transform, it is convenient to consider the space-time maximal extension of each solution of (CW). 

\subsection{Space-time extension and restriction}
To solve the equation locally in and out of light cones, and in more general space-time sets, we introduce restricted energy semi-norms. Let $\sB_d$ be the totality of Borel sets in $\R^d$. For any $B\in\sB_d$ and any $a\ge 0$, we define two sets $B_{\pm a}\in\sB_d$ by 
\EQ{ \label{def B+-}
 \pt B_{+a}:=\{x\in\R^d \mid \exists y\in B,\ |x-y|\le a\},
 \pr B_{-a}:=\{x\in\R^d \mid |x-y|\le a \implies y\in B\}.}
It is clear that for any $a,b,t\ge 0$, we have 
\EQ{
 \pt (B_{+a})_{+b}=B_{+(a+b)}, \pq (B_{-a})_{-b}=B_{-(a+b)}, 
 \pr (B_{-a})_{+a} \subset B \subset (B_{+a})_{-a},
 \pq (B_{+a})^\cmpl=(B^\cmpl)_{-a}.}
For any $I\subset\R$ and any $F:\R\to\R$, we define $B_{\pm F}(I)\in\sB_{1+d}$ by
\EQ{ \label{def B+-I}
 B_{\pm F}(I) := \{(t,x)\in I\times \R^d \mid x\in B_{\pm F(t)}\}.}
For any $B\in\sB_d$, let $V(B)\subset\cH$ be the closed subspace defined by 
\EQ{
 V(B):=\{\fy\in\cH\mid \fy(x)=0\ a.e.x\in B\},}
and then define the restriction of $\cH$ onto $B$ by
\EQ{ \label{def cHR}
 \tHR{B}:=\cH/{V(B)} \simeq V(B)^\perp,}
where $\simeq$ means the isometry, with the quotient norm
\EQ{ 
 \|\fy\|_{\cHR{B}}:=\inf\{\|\psi\|_\cH \mid \psi=\fy \ton{B}\}\pq(\fy\in\cH).} 
Henceforth, we denote for brevity 
\EQ{
 \fy=\psi\ton{B} \overset{\text{def}}{\iff} \fy-\psi\in V(B).}
We also use the more explicit semi-norms for $\fy\in\cH$: 
\EQ{ \label{def ticH}
 \pt \|\fy\|_{\cH(B)}^2:=\int_{B}[|\na u_1|^2+|u_2|^2]dx, 
 \pq \|\fy\|_{\ti\cH(B)}:=\|\fy\|_{\cH(B)}+\|\fy_1\|_{L^{2^*}(B)}.}
All of these three semi-norms are increasing for $B$ and invariant for $\T,\cS$, namely
\EQ{
 \|\fy\|_{X(B_1)} \le \|\fy\|_{X(B_1\cup B_2)}, 
 \pq \|\T^c\cS^s\fy\|_{X(B)}=\|\fy\|_{X(e^{-\s}B+c)},}
for $X=\cH,\ti\cH$, and $\tHR{\ }$. We have, uniformly for $B$, 
\EQ{
 \|\fy\|_{\cH(B)} \le \|\fy\|_{\cHR{B}}, \pq \|\fy\|_{\ti\cH(B)}\lec\|\fy\|_{\cHR{B}}.}
We may have the reverse inequalities when $B$ is smooth. In particular, we have 
\EQ{
 \pt \|\fy\|_{\cHR{\{|x-c|<R\}}}\sim\|\fy\|_{\ti\cH(|x-c|<R)},
 \pr \|\fy\|_{\cHR{\{|x-c|>R\}}}\sim\|\fy\|_{\ti\cH(|x-c|>R)}\sim\|\fy\|_{\cH(|x-c|>R)},}
uniformly for $c$ and $R$, where the open region can be replaced with the closure. The extension operator $X_B:\cH\to V(B)^\perp\subset\cH$ is nothing but the orthogonal projection to $V(B)^\perp$, such that we have 
\EQ{ \label{def XB}
 \|X_B\fy\|_\cH = \|\fy\|_{\cHR{B}}, \pq X_B\fy=\fy \ton{B}.}
Restriction of energy-type functionals is denoted as follows 
\EQ{ \label{def EB}
 E_B(\fy):=\frac{\|\fy\|_{\cH(B)}^2}{2}-\frac{\|\fy_1\|_{L^{2^*}(B)}^{2^*}}{2^*},
 \pq K_B(\fy):=\|\na\fy\|_{L^2(B)}^2-\|\fy\|_{L^{2^*}(B)}^{2^*}.}

The finite propagation speed implies that if a solution $u$ of (CW) satisfies 
\EQ{
 \vec u(0)=\psi\in\tHR{B}\ \ton{B},}
then $u$ is uniquely determined on $B_{-|t|}$ by $\psi$. More precisely, if $\vec u^0,\vec u^1\in C([0,T];\cH)$ satisfy $\vec u^0(0)=\vec u^1(0)$ on $B$, then 
\EQ{
 0\le \forall t\le T,\ a.e.x\in B_{-|t|},\ \vec u_0(t,x)=\vec u_1(t,x).}
By the Strichartz estimate, there is $C>0$ such that if $C\|\psi\|_{\cHR{B}}\le\e_S$ then there is a free solution $v$ satisfying $\vec v(0)=\psi$ on $B$ and $\|v\|_{\St(\R)}\le \e_S$, and so is $u\in\Sol(\R)$ satisfying $\vec u(0)=\psi$ on $B$ and $\|u\|_{\St(\R)}\lec \e_S$, which is unique on $B_{-|t|}(\R)$. 

Now we introduce the space-time maximal extension of a solution of (CW). 
For any $\fy\in\cH$, $c\in\R^d$ and $R>0$, consider the local solution in the light cone $\K_{c,R}:=\{|x-c|+t<R,t\ge 0\}$ with the initial data $\vec u(0)=\fy$ on $|x-c|<R$. Let $t_+(\fy,c)$ be the supremum of such $R$ that there is a unique solution $u$ in $\K_{c,R}$ satisfying $\|u\|_{L^{q_m}(\K_{c,R})}<\I$. 
The uniqueness in cones implies that we have a unique solution $u$ in the space-time region 
\EQ{
 \{(t,x)\in\R^{1+d} \mid 0\le t<t_+(\fy,x)\},}
as well as the Lipschitz continuity 
\EQ{
 |t_+(\fy,x)-t_+(\fy,y)| \le |x-y|.} 
We also write for any strong solution $u$ (either before or after the above space-time maximal extension), 
\EQ{
 t_+(u,x):=t_+(\vec u(0),x).} 
The maximal existence time is then given by 
\EQ{ 
 T_+(u):=\inf_{x\in\R^d}t_+(u,x).} 
The small data theory in interior and exterior cones implies that for any $\fy\in\cH$, there are $a(\fy),b(\fy)>0$ such that 
\EQ{
 t_+(\fy,x) \ge \max(a(\fy),|x|-b(\fy)).}
The definition of $t_+$ implies that 
\EQ{ \label{def t*}
 \|u\|_{L^{q_m}(\K_{c,t_+(u,c)})}=\lim_{R\nearrow t_+(u,c)} \|u\|_{L^{q_m}(\K_{c,R})}=\I,}
and so the small data theory implies 
\EQ{
 \liminf_{t\nearrow t_+(u,c)}\|\vec u(t)\|_{\ti\cH(|x-c|<t_+(u,c)-t)}\gec\e_S.}
In particular, the number of first blow-up points is bounded in the case of type-II
\EQ{ \label{bup points}
 \#\{c\in\R^d \mid t_+(u,c)=T_+(u)\} \lec \liminf_{t\nearrow T_+(u)}\sqrt{\|\vec u(t)\|_\cH/\e_S}.}

Similarly we can define $t_-(\fy,c)<0$ to be the maximal extension in the negative direction, and thus a unique solution $u$ in the maximal space-time domain 
\EQ{ \label{def D}
 \dom(u):=\dom(\fy):=\{(t,x)\in\R^{1+d}\mid t_-(\fy,x)<t<t_+(\fy,x)\},}
satisfying for some $a(\fy),b(\fy)>0$ and for all $x,y\in\R^d$, 
\EQ{ \label{prop tpm}
 \pm t_\pm(\fy,x) \ge \max(a(\fy),|x|-b(\fy)),
 \pq |t_\pm(\fy,x)-t_\pm(\fy,y)| \le |x-y|.}
Since the Lorentz transforms preserve light cones as well as the measure and the topology of $\R^{1+d}$, the property \eqref{def t*} of $t_+$ is also preserved. Hence, each strong solution $u$ defined on its maximal space-time domain $\dom(u)$ is transformed by any Lorentz transform into another solution defined on the maximal domain. This process can produce a solution with no Cauchy time slice, namely $\inf t^+<\sup t^-$, but we ignore such solutions, in order to keep the dynamical viewpoint in terms of the Cauchy problem or the flow in $\cH$. In other words, the Lorentz transforms should be restricted to the range where $\inf t^+<\sup t^-$ is kept. 

For the blow-up solutions on the center-stable manifold, we have 
\begin{lem}
Let $0<T<\I$ and $\vec u\in\Sol([0,T))$ satisfy
\EQ{
 \vec u(0)\in\M_0, \pq \|d_W(\vec u)\|_{L^\I_t(0,T)}\le\de\lec\de_m.} 
Then there exists $c_*\in\R^d$ and $\e>0$ such that 
\EQ{ \label{bcurv on M}
 \pt e^{-\ti\s(\vec u(t))}+|\ti c(\vec u(t))-c_*| \lec \de|t-T|, 
 \pr t_+(\vec u(0),x)=T+|x-c_*|, \pq t_-(\vec u(0),x)<T-|x-c_*|-\e.}
\end{lem}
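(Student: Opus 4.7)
My plan is a three-step proof: Step~1 establishes the modulation convergence, from which Steps~2 and~3 deduce the two light-cone statements using the characterization \eqref{def t*} of $t_\pm$, the Lipschitz property \eqref{prop tpm}, and finite propagation speed.

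\emph{Step 1 (modulation).} Set $(\s(t),c(t)):=(\ti\s,\ti c)(\vec u(t))$ and $v(t):=\Psi_{\s(t),c(t)}^{-1}(\vec u(t))$, both well defined by Lemma~\ref{lem:orth} since $d_W(\vec u(t))\le\de\ll\de_\Phi$; then $\|v(t)\|_\cH\sim d_W(\vec u(t))\le\de$. The modulation equation \eqref{eq sct} with $\|v_1\|_{\dot H^1}\ll 1$ gives the Lipschitz bounds $|\p_t e^{-\s(t)}|+|\p_t c(t)|\lec\|\ga_2(t)\|_{L^2}\lec\de$. Since $\vec u(0)\in\M_0$ with $T<\I$ corresponds to case~(2) of Theorem~\ref{thm:cmd0}, the type-II characterization of \cite{DKM2} quoted after Theorem~\ref{main thm1} forces $\la(t):=e^{\s(t)}\to\I$ as $t\nearrow T$. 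Integrating the two Lipschitz bounds backward from $T$ then produces $e^{-\s(t)}\lec\de(T-t)$, a Cauchy limit $c_*:=\lim_{t\nearrow T}c(t)$, and $|c(t)-c_*|\lec\de(T-t)$, proving the first line of \eqref{bcurv on M}.

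\emph{Step 2 (forward cone).} The bound $\la(t)\gec(\de(T-t))^{-1}$ forces $L^{q_m}$-divergence of the concentrating profile on $\K_{c_*,T}$. After the change of variables $s=\la(t)(x-c(t))$ and using $|c(t)-c_*|\le\de(T-t)$,
\EQN{
 \|W_{\la(\cdot),c(\cdot)}\|_{L^{q_m}(\K_{c_*,T})}^{q_m}\gec\int_0^T\la(t)\!\int_{|s|\le\de^{-1}}|W(s)|^{q_m}\,ds\,dt\sim\int_0^T\la(t)\,dt=\I,
}
the last equality from $\la(t)\gec(T-t)^{-1}$. Combined with the Strichartz control $\|r\|_{\St_m([0,T))}<\I$ of the residual $r:=u-W_{\la,c}$ extracted from the type-II analysis of \cite{DKM2}, this gives $\|u\|_{L^{q_m}(\K_{c_*,T})}=\I$, i.e.\ $t_+(\vec u(0),c_*)\le T$. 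The reverse inequality is global existence on $[0,T)$, and \eqref{prop tpm} upgrades this to $t_+(\vec u(0),x)\le T+|x-c_*|$. For the matching lower bound, fix $x\ne c_*$, $R<T+|x-c_*|$, and set $\te:=T+|x-c_*|-R>0$. On $\K_{x,R}\cap\{t\le T-\te/2\}$ one has $\|u\|_{L^{q_m}}\le\|u\|_{\St_m([0,T-\te/2])}<\I$ directly from $u\in\Sol([0,T))$. On $\K_{x,R}\cap\{t>T-\te/2\}$ the cone stays at distance $\gec\te$ from $c(t)$ (using $|c(t)-c_*|\le\de\te/2$ from Step~1), so $|W_{\la(t),c(t)}(y)|\lec\la(t)^{1-d/2}\te^{2-d}$ integrates to $\lec\te^{-2(d+1)}\int_{T-\te/2}^T\la^{-(d+1)}dt<\I$; the residual is again controlled in $\St_m$. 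Hence $t_+(\vec u(0),x)\ge R\nearrow T+|x-c_*|$.

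\emph{Step 3 (backward cone).} The maximal backward extension of the Cauchy problem produces $T_-(u)<0$ and the uniform lower bound $-t_-(\vec u(0),x)\ge\e_0:=-T_-(u)>0$. This suffices for $|x-c_*|\le T+\e_0-\e$. For larger $|x-c_*|$, Step~1 applied at $t=0$ gives $|c(0)-c_*|+e^{-\s(0)}\lec\de T$, so $\vec u(0)$ restricted to the exterior of $B(c_*,C\de T)$ has small $\cH$-norm (the soliton part is concentrated in $B(c(0),Ce^{-\s(0)})$ and the remainder is small in $\cH$); the small-data theory on exterior cones then yields $-t_-(\vec u(0),x)\ge|x-c_*|-C\de T$. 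Any $\e\in(0,\min(\e_0,(1-C\de)T))$ satisfies the required bound for all $x$, completing \eqref{bcurv on M}.

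The main obstacle I anticipate is the $L^{q_m}$ bookkeeping on the shrinking slab near $T$ in Step~2: the type-II decomposition only provides $\cH$ convergence of $r(t)\to\vec\fy$, whereas I need $L^{q_m}_{t,x}$ control of $r$ on the slab. I expect to handle this via the standard perturbation analysis of type-II blow-up as in \cite{DKM2}, upgraded using a local perturbation argument in the rescaled coordinate of the kind embodied in Lemma~\ref{lem:LUS}, which uses the uniform smallness of $v$ from Step~1 to bootstrap $\cH$-closeness to Strichartz-closeness on unit $\ta$-intervals.
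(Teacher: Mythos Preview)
Your overall strategy is sound, but Step~2 has a genuine gap that you yourself flag, and the fix you propose does not close it. You need $\|r\|_{\St_m}$ finite on the slab $\{T-\te/2<t<T\}$ (restricted to the exterior cone), where $r=u-W_{\la,c}$. The type-II decomposition of \cite{DKM2} gives only $\cH$-convergence of $r(t)$, not Strichartz integrability. Lemma~\ref{lem:LUS} does give $\|v\|_{\St_m}\lec\de$ on each unit $\ta$-interval, but as $t\nearrow T$ there are infinitely many such intervals (since $\ta\to\I$), and summing $\de^{q_m}$ over them diverges; restricting to the exterior region does not improve the Lemma~\ref{lem:LUS} bound. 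So this route does not yield the finite $L^{q_m}$ needed to push $t_+$ past $T$ in the cone $\K_{x,R}$.

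The paper bypasses all of this with a much lighter argument. First, it proves $\s(t)\to\I$ directly, without \cite{DKM2}: if $\s(t_n)$ stayed bounded along a sequence $t_n\nearrow T$, then $\vec u(t_n)$ would be uniformly small in $\ti\cH$ on balls of a fixed radius, and the small-data theory would extend $u$ past $T$, a contradiction. Second, once $\s\to\I$ and $c\to c_*$, the decomposition $\vec u=\T^c\cS^\s\vec W+O(\de;\cH)$ immediately gives $\limsup_{t\nearrow T}\|\vec u(t)\|_{\cH(|x-c_*|>R)}\lec\de\ll\e_S$ for every $R>0$; the small-data theory in the exterior cone $\{|x-c_*|>R-|t-T'|\}$ (starting from $T'$ close to $T$) then extends $u$ and yields $t_+\ge T+|x-c_*|$ and $t_-\le T-|x-c_*|$ in one stroke, with the strict inequality for $t_-$ obtained by solving in a slightly larger exterior cone. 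No Strichartz bookkeeping on the residual is ever needed. Your Step~2 upper bound is also unnecessary: once the lower bound $t_+(\vec u(0),x)\ge T+|x-c_*|$ is known for $x\ne c_*$, the identity follows from $T_+(u)=T=\inf_x t_+$ and the Lipschitz property \eqref{prop tpm}.
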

\begin{proof}
Let $(\s(t),c(t)):=(\ti\s(\vec u(t)),\ti c(\vec u(t))$. 
First we show $\s(t)\to\I$ as $t\to T-0$. If not, there exist a sequence $t_n\nearrow T$ with $\sup_n\s(t_n)<\I$ and $R>0$ such that 
\EQ{
 \sup_n \sup_{c\in\R^d}\|\vec u(t_n)\|_{\ti\cH(|x-c|<R)} \lec \de \ll \e_S,}
which ensures solvability in the cone $|x-c|+|t-t_n|<R$ for all $c\in\R^d$ and all $t_n$, thereby extending the solution $u$ beyond $T$, a contradiction. Hence $\s(t)\to\I$. Then the modulation equation \eqref{eq sct} implies convergence $c(t)\to\exists c_*\in\R^d$ as well as the first estimate of \eqref{bcurv on M}. Since $\|\vec u(t)-\T^c\cS^\s\vec W\|_\cH\sim d_W(\vec u(t))<\de$, that behavior of $(\s,c)$ implies that for any $R>0$, 
\EQ{
 \limsup_{t\nearrow T}\|\vec u(t)\|_{\cH(|x-c_*|>R)}\lec\de \ll \e_S,}
which ensures solvability in the exterior cone $|x-c_*|+|t-T'|>R$ for all $T'<T$ close to $T$, and so $t_+(\vec u(0))\ge T'+|x-c_*|-R$ and $t_-(\vec u(0))\le T'-|x-c_*|+R$. Letting $R\searrow 0$ and $T'\nearrow T$, we obtain 
\EQ{
 t_+\ge T+|x-c_*|, \pq t_- \le T-|x-c_*|.}
It can not be better for $t_+$ as $u$ blows up at $(T,c_*)$, so we obtain the identity in \eqref{bcurv on M}. The finite propagation implies $\|\vec u(t)\|_{\cH(|x-c_*|>R+|T-t|)}\lec\de$, so that we can solve in a slightly larger exterior cone starting from $t=T'\in(0,T)$. Thus we obtain the last estimate in \eqref{bcurv on M}. 
\end{proof}

\subsection{Extension by the Lorentz transform}
Next we use the Lorentz transforms to extend the manifold, so that it can include all the ground state solitons $\Soliton(W)$. 

Let $\vec u\in\Sol([0,T))$, $\vec u(0)\in\M_0$ and $T<\I$. 
The estimate \eqref{bcurv on M} on the blowup surface implies existence of $T'\in(0,T)$ such that $\{|x-c_*|\ge|t-T'|\}\subset\dom(u)$. Then for any Lorentz transform centered at $(T',c_*)$, the solution $u$ is transformed into another solution on the maximal domain containing the time slice $\{t=T'\}$. In the case where $\vec u(0)\in\M_0$ yields $\vec u\in\Sol([0,\I))$, Lorentz transformed solutions are also defined for all large $t$, because of \eqref{prop tpm}.

Let $u$ be any solution with $\vec u(0)\in\M_0$ defined on $\dom(u)$, and let $w$ be any Lorentz transform of $u$. The above argument implies that $w$ is defined on some time slice. 
Let $\M_2$ be the totality of the maximal orbit of all such solutions $w$. Then it is invariant by the flow, $\T$ and $\cS$, satisfying
\EQ{ \label{def M2}
 \M_2 \supset \M_1 \cup \Soliton(W).}
Every solution on $\M_2$ is Lorentz transformed to another solution on $\M_1$. $\M_2$ is Lorentz invariant in the following sense: Let $u$ be a solution on $\M_2$, extended to $\dom(u)$. Then any Lorentz transform of $u$ has non-trivial maximal existence interval, on which the transformed solution belongs to $\M_2$. 

To see that $\M_2$ is locally $C^1$ diffeo to $\M_1$, it suffices to see that the Lorentz transform gives a local $C^1$ mapping around any solution. 
Let $\vec u\in\Sol([-T,T])$ for some $T>0$, then the local wellposedness yields a neighborhood $O\ni\vec u(0)$ and $R>0$ such that for any $\fy\in O$ we have 
\EQ{
 \dom(\fy)\supset X:=\{(t,x)\in\R^{1+d}\mid |t|<\max(T/2,|x|-R)\}.}
Then there is a neighborhood $U\ni 1$ in the Lorentz group such that every transform in $U$ maps the region $X$ to a set containing $[-T/4,T/4]\times\R^d$. 
Since the space rotation plays no role, we may restrict to those transforms defined on $(t,x_1)\in\R^2$, which can be parametrized as 
\EQ{
 u^\te(t,y,z)=u(ct+sy,cy+st,z), \pq c:=\cosh\te,\ s:=\sinh\te, \pq \te\in\R,}
where $x=(y,z)\in\R^1\times\R^{d-1}$. Then there is $\Te>0$ such that for any $\vec u(0)\in O$ and for any $\te\in(-\Te,\Te)$, we have $\dom(u^\te)\supset[-T/4,T/4]\times\R^d$. This defines a mapping 
\EQ{
 \sL_\te:O\ni\vec u(0)\mapsto \vec u^\te(0)\in\cH} 
for each $\te\in(-\Te,\Te)$. 
The continuity of $\sL_\te$ can be seen by the linear energy identity. For any smooth function $u$ defined on $X$, we have by the divergence theorem
\EQ{
 \pt E^F(\vec u^\te(0))= cE^F(\vec u(0))+sP_1(\vec u(0))+\int_{\frac{sy}{ct}>1} (\ddot u-\De u)\frac{t}{|t|}(cu_t+su_y)dxdt,
 \pr P_1(\vec u^\te(0))= sE^F(\vec u(0))+cP_1(\vec u(0))+\int_{\frac{sy}{ct}>1} (\ddot u-\De u)\frac{t}{|t|}(su_t+cu_y)dxdt,}
where $E^F(\fy):=\|\fy\|_\cH^2/2$ denotes the free energy. 
Applying this to the difference of two solutions $u,w$ starting from $O$ yields 
\EQ{
 \|\vec u^\te(0)-\vec w^\te(0)\|_\cH^2
 \pt\lec \|\vec u(0)-\vec w(0)\|_\cH^2 
 \prq+ \|f'(u)-f'(w)\|_{L^1_tL^2_x(sy/(ct)<1)}\|\vec u-\vec w\|_{L^\I_t\cH_x(sy/(ct)<1)},}
where the implicit constants depend on $\Te$. The standard perturbation argument implies that the last term is also bounded by $\|\vec u(0)-\vec w(0)\|_\cH^2\ll 1$. The existence and continuity of the derivative of $\sL_\te$ is shown similarly by applying the energy estimate to the linearized equation 
\EQ{
 (\p_t^2-\De)u'=f''(u)u', \pq \vec u'(0)\in\cH.} 
Since $(\sL_\te)^{-1}=\sL_{-\te}$ is obvious, we thus conclude that $\sL_\te$ is a local $C^1$ diffeo on $O$. 
Therefore $\M_2$ is also a $C^1$ manifold with codimension $1$ in $\cH$. It is clear from the construction that all these manifolds $\M_0\subset\M_1\subset\M_2$ are connected. 

For the trichotomy around $\M_2$, it is obvious from the energy estimate that every scattering (to $0$ as $t\to\I$) solution is transformed into another such solution by any Lorentz transform. The solutions in the other part of the neighborhood blow up away from a (much) bigger neighborhood, which is generated from the $\de_X$ neighborhood of $\Static(W)$ by the above extensions. Thus we obtain the 9-set dynamics classification in the region 
\EQ{
 E(u) < \sqrt{(E(W)+\e^2)^2+|P(u)|^2}.}
Reduction of this region to $E(u)<E(W)+\e^2$ is done as in \cite{CNW-nonrad} by the Lorentz transform and the identities for (CW)
\EQ{
 \pt E(\vec u^\te)= cE(\vec u)+sP_1(\vec u),
 \pq P_1(\vec u^\te)= sE(\vec u)+cP_1(\vec u).}
Indeed, if $E(u)<|P(u)|$ then we can transform it (in some space-time region) to another solution with negative energy, which has to blow up in both time directions by the classical argument of Levine \cite{Lev}, or more precisely by \cite{KM}. Hence the solution before the transform should also blow up in both directions. 

If $E(u)=|P(u)|$ and it is global for $t>0$, then there is a sequence of solutions $\vec u^n\in\Sol([0,\I))$ given by Lorentz transforms such that $E(u_n)\to 0$. 
Then the classical argument of Payne-Sattinger \cite{PS} implies that $K(u_n)\ge 0$ as soon as $E(u_n)<E(W)$, and so $E(u_n)\sim\|\vec u_n\|_{L^\I_t\cH}^2\to 0$. 
The small data scattering implies that $\|u_n\|_{L^{q_m}(\R^{1+d})}\lec E(u_n)^{1/2}\to 0$, but since the Lorentz transform is measure preserving on $\R^{1+d}$, it implies that $\|u\|_{L^{q_m}(\R^{1+d})}=0$. 
In short, all the solutions with $E(u)\le |P(u)|$ blow up in both time directions except for the trivial solution $0$. 

If $E(u)>|P(u)|$, then we can transform it to another solution $\ti u$ with $P(\ti u)=0$ and $E(\ti u)<E(W)+\e^2$, and so $\ti u$ should either scatter to $0$ as $t\to\I$, blow up away from the ground state in the positive time direction, or live on $\M_1$. Each of those properties is transferred back to the original solution $u$. Note that if $\dom(\ti u)$ contains no time slice then the original solution $u$ must blow up in both time directions. 
Thus we complete the 9-set dynamics classification slightly above the ground states, and the proof of Theorem \ref{main thm2}. 

\vspace{36pt}

{\large \bf Part II: Large radiation}\\

The goal in the rest of paper is to extend the center-stable manifold to the entire energy space $\cH$, together with the dynamics around it, by a simple argument which allows one to reduce the problem to $\M_0$ in the region $E(u)<E(W)+\e^2$, using the asymptotic Huygens principle together with the finite speed of propagation. 

\section{Detaching the radiation}
For any $B\in\sB_d$ and any $T\in(0,\I]$, we define a semi-norm $\Ra_B^T$ in $\cH$ by
\EQ{ \label{def Ra}
 \|\fy\|_{\Ra_B^T}:=\inf\{\|U(t)\psi \|_{L^\I_t(0,T;\cHR{B_{+t}})\cap \St(0,T)} 
 \mid \psi=\fy \ton{B^\cmpl} \}.}
Smallness in $\Ra_B^T$ will imply that we can detach the exterior component using the wave starting from $\psi$ which is out-going dispersive in the sense of the energy on the interior cone $B_{+t}$, and also the Strichartz norm on $\R^d$, both for $0\le t\le T$. 

The lower semi-continuity of the norms implies that the infimum in defining $\Ra_B^T$ is achieved by some $\psi\in\cH$ such that $\psi=\fy$ on $B^\cmpl$ and 
\EQ{ \label{Ra bd Hex}
 \|\fy\|_{\Ra_B^T}=\|U(t)\psi \|_{L^\I_t(0,T;\cHR{B_{+t}})\cap \St(0,T)} \lec \|\psi\|_\cH \sim \|\fy\|_{\cHR{B^\cmpl}}.}
For the last equivalence, $\ge$ is by definition of $\tHR{B^\cmpl}$, while $\lec$ follows from 
\EQ{
 \|\psi\|_\cH \le \|\psi\|_{\cHR{B}}+\|\psi\|_{\cHR{B^\cmpl}}
 \le \|\fy\|_{\Ra_B^T}+\|\fy\|_{\cHR{B^\cmpl}} \lec \|\fy\|_{\cHR{B^\cmpl}}.}

The following laws of order are trivial by definition 
\EQ{
 \pt \ta\ge 0 \implies \|\fy\|_{\Ra_B^{T}} \le \|\fy\|_{\Ra_B^{T+\ta}},
  \pq \|U(\ta)\fy\|_{\Ra_{B_{+\ta}}^{T-\ta}} \le \|\fy\|_{\Ra_B^T},}
as is the invariance 
$\|\T^c\cS^\s\fy\|_{\Ra_B^T} = \|\fy\|_{\Ra_{e^\s B+c}^{e^\s T}}$, but it is not invariant under the time inversion $\fy\mapsto\fy^\da$.  
The space-time continuity of the norms implies 
\EQ{ \label{loc Ra}
 \lim_{R,T\to+0}\|\fy\|_{\Ra_{|x-c|<R}^T} =0.}
Also note the trivial identity $\|\fy\|_{\Ra_{\R^d}^T}=0$.

The following ``asymptotic Huygens principle" plays a crucial role in using the dispersive property of (CW) in the above function space.
\begin{lem} \label{lem:Huy}
For any free solution $\vec v\in C(\R;\cH)$ and bounded $B\in\sB_d$ we have 
\EQ{
 \lim_{T\to\I}\sup_{t>0}\|\vec v(T+t)\|_{\cHR{B_{+t}}}=0.}
\end{lem}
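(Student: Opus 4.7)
The approach combines a density reduction with the finite speed of propagation and Huygens-type energy concentration near the light cone. Writing $\fy := \vec v(0) \in \cH$, the starting observation is that for any $T, t \ge 0$,
\EQ{
 \|\vec v(T+t)\|_{\cHR{B_{+t}}} \le \|\vec v(T+t) - U(T+t)\psi\|_\cH + \|U(T+t)\psi\|_{\cHR{B_{+t}}},
}
where $\psi\in\cH$ is an arbitrary approximation of $\fy$. Since $U(T+t)$ is an isometry on $\cH$ and $\|\cdot\|_{\cHR{A}}\le\|\cdot\|_\cH$, the first term equals $\|\fy-\psi\|_\cH$ uniformly in $T,t$. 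Thus, given $\e>0$, I pick $\psi\in C_c^\I(\R^d)\times C_c^\I(\R^d)$ with $\supp\psi\subset\{|x|<R_0\}$ and $\|\fy-\psi\|_\cH<\e$; the problem reduces to controlling $\|U(T+t)\psi\|_{\cHR{B_{+t}}}$ as $T\to\I$ uniformly in $t>0$.

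Next I use that $B$ is bounded, so $B\subset\{|x|<R_B\}$ for some $R_B<\I$, hence $B_{+t}\subset\{|x|\le R_B+t\}$ for every $t\ge 0$. For the dimensions $d=3$ and $d=5$ treated in the paper, the sharp Huygens principle applies and
\EQ{
 \supp U(T+t)\psi \subset \{(T+t)-R_0 \le |x| \le (T+t)+R_0\}.
}
When $T>R_0+R_B$ we have $(T+t)-R_0>R_B+t$, so $\supp U(T+t)\psi$ and $B_{+t}$ are disjoint. Taking the trivial extension $0\in\cH$, which coincides with $U(T+t)\psi$ on $B_{+t}$, yields $\|U(T+t)\psi\|_{\cHR{B_{+t}}}=0$ for all $t>0$ and all such $T$. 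Combining with the density step gives $\sup_{t>0}\|\vec v(T+t)\|_{\cHR{B_{+t}}}<\e$ for $T$ large, and letting $\e\to 0$ completes the proof in the odd-dimensional case.

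For a general $d$ the sharp Huygens step fails and must be replaced by an asymptotic version. The relevant quantitative statement is that for smooth compactly supported $\psi$, the energy of $U(\tau)\psi$ concentrates asymptotically on the light cone $|x|=\tau$: via Friedlander's radiation field one has
\EQ{
 \lim_{s\to\I}\sup_{\tau\ge s+R_0}\|U(\tau)\psi\|_{\cH(|x|\le\tau-s)} = 0.
}
Applied with $\tau=T+t$ and $s=T-R_B$ this yields the energy decay in $B_{+t}\subset\{|x|\le\tau-(T-R_B)\}$, uniformly in $t>0$. To pass from $\cH(B_{+t})$ to $\cHR{B_{+t}}$ I multiply $U(T+t)\psi$ by a smooth radial cutoff $\chi(|x|/(R_B+t+1))$ which equals $1$ on $B_{+t}$, obtaining an admissible global extension whose $\cH$ norm is controlled by the energy of $U(T+t)\psi$ in a slightly enlarged region, where the same asymptotic estimate applies.

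The main delicate point is the second approach: once one reduces to compactly supported smooth data the argument is almost trivial for odd $d\ge 3$ via strong Huygens, but for general $d$ (in particular even $d$) one needs the quantitative radiation-field statement above, together with a cutoff to move from the semi-norm $\|\cdot\|_{\cH(B_{+t})}$ to the quotient norm $\|\cdot\|_{\cHR{B_{+t}}}$; verifying that the cutoff cost is controlled uniformly in $t$ is the technical core. In the paper's setting $d\in\{3,5\}$ this obstacle does not arise and the proof is essentially one line once density is set up.
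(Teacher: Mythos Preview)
Your proof is correct for the odd dimensions $d=3,5$ that the paper treats, but the route differs from the paper's. After the same density reduction to compactly supported smooth data, the paper invokes conservation of the conformal energy
\[
\int_{\R^d}|tv_t+rv_r+(d-1)v|^2+|rv_t+tv_r|^2+(d-1)|v|^2+(t^2+r^2)|\na^\perp v|^2\,dx,
\]
and reads off from the matrix identity
\[
\mat{v_t\\v_r}=\frac{1}{t^2-r^2}\mat{t & -r\\ -r & t}\mat{tv_t+rv_r\\ rv_t+tv_r}
\]
that $\|\vec v(t)\|_{\cH(|x|<t-T)}\lec T^{-1}$ uniformly in $t>T$, together with free dispersive decay in $L^{2^*}$. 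This argument is dimension-blind (it works equally for even $d$), which matches the paper's remark that its arguments are ``not sensitive to the dimension.'' Your approach trades that generality for simplicity: once $d$ is odd, strong Huygens makes the statement trivial after density, with no quantitative estimate needed at all. Your general-$d$ sketch via the radiation field and a cutoff is plausible but not fully carried out; the conformal-energy argument is a clean self-contained substitute there, avoiding any appeal to Friedlander's theory and giving an explicit rate.
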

\begin{proof}
Since $B$ is included in some ball, it suffices to prove 
\EQ{
 \lim_{T\to\I}\sup_{t>T}\|\vec v(t)\|_{\ti\cH(|x|<t-T)}=0.}
Since the statement is obviously stable in the energy norm, we may restrict the initial data to a dense set, say $C_0^\I(\R^d)$. Multiplying the equation with $(t^2+r^2)\dot v + 2trv_r+(d-1)tv$, we obtain conservation of the conformal energy 
\EQ{
 \int_{\R^d}|tv_t+rv_r+(d-1)v|^2+|rv_t+tv_r|^2+(d-1)|v|^2+(t^2+r^2)|\na^\perp v|^2dx,}where $\na^\perp v:=\na v-xv_r$ denotes the derivative in the angular directions. Since in the region $|x|<t-T$ 
\EQ{
 \pt\left|\mat{v_t \\ v_r}\right|
 \pn=\left|\frac{1}{t^2-r^2}\mat{t & -r \\ -r & t}\mat{tv_t+rv_r \\ rv_t+tv_r}\right|
 \pn\le\frac{1}{T}\left|\mat{tv_t+rv_r \\ rv_t+tv_r}\right|,}
the $L^2$ norm of the left tends to $0$ uniformly as $T\to\I$, as well as those of $v/(t-T)$ and $\na^\perp v$, while $\|v\|_{L^{2^*}(\R^d)}\to 0$ by the free dispersive decay.
\end{proof}

The asymptotic Huygens principle implies the following decay of $\Ra_B^\I$: For any $\fy\in\cH$ and any bounded $B\in\sB_d$,   
\EQ{ \label{disp Ra}
 \lim_{\ta\to\I}\|U(\ta)\fy\|_{\Ra_B^\I}=0.}
In other words, every free solution in $\cH$ will eventually gets into any small ball of $\Ra_B^\I$ around $0$, as well as every scattering solution of (CW) in $\cH$. Also, when the solution is around the ground states with large dispersive remainder, we can take the semi-norm $\Ra_B^T$ small by the following.
\begin{lem} \label{lem:est Ra}
There is $\e_D>0$ with the following property. Let $\vec u\in\Sol(I)$, $(\s,c):I\to\R^{1+d}$, $\vec F(t)=U(t)\vec F(0)\in\cH$ and $R\in C(I;\cH)$ satisfy
\EQ{
 \pt I\ni\forall t,\ \vec u(t)=\T^{c(t)}\cS^{\s(t)}\vec W+\vec F(t)+R(t), 
 \pr \|F\|_{\St(I)}+\|R\|_{L^\I(I;\cH)}=:\ra\le\e_D,}
for some interval $I$. Then for any $t_0,t_1\in I$ we have 
\EQ{ \label{est sc}
 |e^{-\s(t_0)}-e^{-\s(t_1)}|+|c(t_0)-c(t_1)|\lec \ra(|t_0-t_1|+e^{-\s(t_0)}).}
If $I=[0,T)$, then there are $t_0 \in(0,T)$ and $B:[t_0,T)\to\sB_d$ such that 
\EQ{ \label{est Ra}
 \sup_{t_0 \le t<T} \|\T^{c(t)}\cS^{\s(t)}\vec W\|_{\cHR{B(t)^\cmpl}} + \|\vec F(t)\|_{\Ra_{B(t)}^{T-t}} \lec \ra^{d/2-1}.}
Furthermore, if $T<\I$, then $\s(t)\to\I$ and $c(t)\to\exists c_*\in\R^d$ as $t\to T-0$. 
\end{lem}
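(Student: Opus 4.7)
\textbf{Step 1 (modulation estimate \eqref{est sc}).}  Fix any $t_0 \in I$ and pass to rescaled variables $\ta := e^{\s(t_0)}(t - t_0)$, $y := e^{\s(t_0)}(x - c(t_0))$, so that the soliton at $t_0$ becomes the standard $\vec W$; by scale invariance $\|\ti F\|_{\St(0, \y_l)} \le \|F\|_{\St(I)} \le \ra$ and $\|\ti R(0)\|_\cH \le \|R\|_{L^\I\cH} \le \ra$.  Using the ansatz $\vec v := \vec W + \vec{\ti F}$, which solves (CW) up to the source $f'(W+\ti F) - f'(W)$ of $\St^*$-size $O(\ra)$, standard Strichartz perturbation theory on the unit rescaled interval $[0, \y_l]$ (Lemma \ref{lem:LUS}) yields $\|\vec{\ti u} - \vec v\|_{L^\I\cH \cap \St(0, \y_l)} \lec \ra$.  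Substituting the given decomposition $\vec{\ti u} = \T^{\ti c}\cS^{\ti \s}\vec W + \vec{\ti F} + \ti R$ then gives $\|\T^{\ti c(\ta)}\cS^{\ti \s(\ta)}\vec W - \vec W\|_\cH \lec \ra$, and the non-degeneracy in Lemma \ref{lem:orth} forces $|\ti\s(\ta)| + |\ti c(\ta)| \lec \ra$ on $[0, \y_l]$.  Unfolding gives \eqref{est sc} when $|t - t_0| \le \y_l e^{-\s(t_0)}$, and iterating over consecutive rescaled-unit intervals (the drift per step in the dimensionless quantities is uniformly $O(\ra)$) extends the estimate to arbitrary $t_0, t_1 \in I$.

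\textbf{Step 2 (blow-up behavior).}  Suppose $T < \I$ and, arguing by contradiction, that $\s(t_n) \le M$ along some $t_n \nearrow T$.  Then $\T^{c(t_n)}\cS^{\s(t_n)} W$ has bounded Strichartz norm on physical intervals of length $\sim e^{-M}$ around $t_n$, and adding $\vec F + R$ is an $O(\ra)$ Strichartz-compatible perturbation; local wellposedness then extends $\vec u$ past $T$, contradicting $T = T_+(u)$.  Hence $\s(t) \to \I$.  Applying \eqref{est sc} gives $|c(t) - c(t')| \lec \ra|t - t'|$ on $[0, T)$, so $c(\cdot)$ is Cauchy as $t \to T-0$ and converges to some $c_* \in \R^d$.

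\textbf{Step 3 (detaching region \eqref{est Ra}).}  Set $B(t) := \{|x - c(t)| < R(t)\}$ with $R(t) := C\ra^{-1} e^{-\s(t)}$ for a sufficiently large absolute constant $C$.  The polynomial decay $\|\vec W\|_{\cH(|y| > \rho)} \sim \rho^{-(d-2)/2}$ combined with scale and translation invariance gives $\|\T^{c(t)}\cS^{\s(t)}\vec W\|_{\cHR{B(t)^\cmpl}} \lec (e^{\s(t)} R(t))^{-(d-2)/2} = C^{-(d-2)/2} \ra^{d/2 - 1}$.  For the radiation piece, take the trivial extension $\psi := \vec F(t)$, so that
\[
\|\vec F(t)\|_{\Ra_{B(t)}^{T - t}} \le \sup_{s \in [0, T-t)} \|\vec F(t + s)\|_{\cHR{B(t)_{+s}}} + \|F\|_{\St(t, T)}.
\]
The Strichartz tail tends to zero as $t$ approaches $\sup I$ by $\|F\|_{\St(I)} \le \ra$.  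For the $L^\I\cH$ piece I would first fix a reference ball $B_*$ containing every $B(t)_{+s}$ modulo a common expanding cone, using Steps 1 and 2: $B(t)$ collapses to $\{c_*\}$ when $T < \I$, while $|c(t)| \lec |c(0)| + \ra t$ and $R(t) \lec C\ra^{-1}$ stay safely inside the Huygens-``dispersed'' region $\{|x| \ll t\}$ when $T = \I$.  Then the asymptotic Huygens principle (Lemma \ref{lem:Huy}) applied to $\vec F$ on $B_*$ makes the supremum $\lec \ra^{d/2-1}$ once $t_0$ is chosen large enough.

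\textbf{Main obstacle.}  The hard step is \eqref{est Ra}, because three different decay mechanisms --- the polynomial spatial decay of $W$, the Strichartz tail of $F$, and the Huygens tail of $F$ on the moving family of expanding cones $B(t)_{+s}$ --- must be tied together uniformly in $t \in [t_0, T)$, while the geometry of the cones is only quantitatively controlled by Step 1.  This is what dictates the slightly circular order Step 1 $\to$ Step 2 $\to$ Step 3 of the plan, and is where the bulk of the technical work lies.
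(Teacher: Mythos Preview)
Your Steps~1 and~2 are essentially the paper's argument and are fine. Step~3 contains a genuine gap.

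The problem is your choice $B(t)=\{|x-c(t)|<C\ra^{-1}e^{-\s(t)}\}$ together with the claim $R(t)\lec C\ra^{-1}$. Nothing in the hypotheses, nor in Step~1, forces $e^{-\s(t)}$ to stay bounded when $T=\infty$: the estimate \eqref{est sc} only gives $e^{-\s(t)}\le (1+C\ra)e^{-\s(0)}+C\ra t$, so $R(t)$ can grow like $C' t$ with $C'$ large. Then $B(t)_{+s}$ has radius $\sim C' t+s$, which eventually escapes every cone $\{|x|<R_*+t+s\}$ based at a fixed bounded $B_*$, and Lemma~\ref{lem:Huy} no longer applies. In short, your ball can expand faster than light, killing the Huygens step.

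The paper avoids this by taking, for $T=\infty$, the ball $B(s)=\{|x-c(0)|<e^{-\s(0)}+s/2\}$: fixed center, growth rate $\tfrac12<1$. Then $B(s)_{+\tau}\subset B(0)_{+(s/2+\tau)}$ with $B(0)$ bounded, so Lemma~\ref{lem:Huy} applies directly; and \eqref{est sc} ensures that the soliton center $c(s)$ (drifting at speed $O(\ra)$) and scale $e^{-\s(s)}$ (bounded by $O(\ra s)$) stay deep inside $B(s)$, giving the $\ra^{d/2-1}$ bound on the $W$ tail for large $s$. For $T<\infty$ the paper uses a fixed small ball $B=\{|x-c_*|<R\}$, which is equivalent to your collapsing ball. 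So the fix is simply to decouple the radius of $B(t)$ from the instantaneous soliton scale and let it grow at a sub-luminal rate instead.
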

\begin{proof}
Let $\psi(t):=\T^{c(t)}\cS^{\s(t)}\vec W$, $\vec v(t):=U(t)(\vec F(0)+R(0))$, and $\vec w(t):=\vec u(t)-\psi(0)$. Then we have 
\EQ{
 \square w=f'(\psi_1(0)+w)-f'(\psi_1(0)), \pq \vec w(0)=\vec v(0),}
hence by Strichartz 
\EQ{
  \|\vec w-\vec v\|_{L^\I\cH\cap\St(0,S)}
 \pt\lec \|w\|_{\St_s(0,S)}(\|\psi(0)\|_{\St_m(0,S)}+\|w\|_{\St_m(0,S)})^{2^*-2}
 \pr\lec \|w\|_{\St_s(0,S)}(|e^{\s(0)}S|^{1/q_m}+\|w\|_{\St_m(0,S)})^{2^*-2},}
and $\|\vec v\|_{\St(0,S)} \lec \|\vec F\|_{\St(0,S)}+\|R(0)\|_\cH\le\ra$, 
for any $S\in(0,T)$. Choosing $\e_D\ll 1$, we deduce that as long as $0<e^{\s(0)}S<\e_D$ 
\EQ{
 \|\vec w-\vec v\|_{L^\I\cH\cap\St(0,S)} \ll \|\vec v\|_{\St(0,S)} \lec \ra.}Since $\vec w(t)-\vec v(t)=\psi(t)-\psi(0)+R(t)-U(t)R(0)$, it implies that for $0<t<e^{-\s(0)}\e_D$, 
\EQ{
 \ra\pt\gec\|\vec w(t)-\vec v(t)\|_\cH+\|R(t)\|_\cH+\|R(0)\|_\cH
 \pr\gec\|\psi(t)-\psi(0)\|_\cH \sim |\s(t)-\s(0)|+e^{\s(0)}|c(t)-c(0)|.}

Now define a time sequence inductively by $t_0=0$ and $t_{j+1}=t_j+e^{-\s(t_j)}\e_D$. Then applying the above argument from $t_j$ yields 
\EQ{
 |\s(t)-\s(t_j)|+e^{\s(t_j)}|c(t)-c(t_j)| \lec \ra}
for $t_j\le t\le t_j+e^{-\s(t_j)}\e_D$, and induction on $j$ yields the desired estimate \eqref{est sc}. 

If $I=[0,\I)$, let $B(t):=\{|x-c(0)|<e^{-\s(0)}+t/2\}$. Lemma \ref{lem:Huy} implies 
\EQ{
 \|\vec F(s)\|_{\Ra_{B(s)}^\I} \lec \|\vec F\|_{L^\I_t(s,\I;\cHR{B(s)_{+t}})} + \|F\|_{\St(s,\I)} \to 0}
as $s\to\I$, while $\|\psi(s)\|_{\cHR{B(s)^\cmpl}}\lec\ra^{d/2-1}$ for large $s$, by \eqref{est sc} together with the explicit form of $W$. Thus we obtain \eqref{est Ra}.

If $I=[0,T)$ and $T<\I$, then $\s(t)\to\I$ as $t\nearrow T$, since otherwise there is a sequence $t_n\nearrow T$ with $\sup_n\s(t_n)<\I$ and $\de>0$ such that 
\EQ{
 \sup_n \sup_{c\in\R^d}\|\vec u(t_n)\|_{\ti\cH(|x-c|<\de)} \lec \ra.}
Choosing $\e_D\ll\e_S$, this ensures solvability in the cone $|x-c|+|t-t_n|<\de$ for all $c\in\R^d$ and all $t_n$, thereby extending the solution beyond $T$, a contradiction. Hence $\s(t)\to\I$. Then \eqref{est sc} implies the convergence $c(t)\to\exists c_*\in\R^d$. Let $B:=\{|x-c_*|<R\}$. Then as $s\nearrow T$ we have 
\EQ{
 \pt \|\vec F(s)\|_{\Ra_B^{T-s}} \le \|\vec F\|_{L^\I(s,T;\cHR{B_{T-s})}}+\|\vec F\|_{\St(s,T)} \to \|\vec F(T)\|_{\cHR{B}},}
and $\|\psi(s)\|_{\cHR{B^\cmpl}} \to 0$. Choosing $R$ small enough yields \eqref{est Ra}.
\end{proof}

The following Sobolev-type inequality implies that $\Ra_B^\I$ controls $L^{2^*}$, which is a notable difference from $\Ra_B^T$ with $T<\I$. 
\begin{lem}[Time-Sobolev for the Strichartz norms]
Let $\N\ni d\ge 2$, $\Z\ni k\ge 0$ and $2\le q\le r\le\I$. 
Then for any free solution $v$ we have 
\EQ{ 
 \|\p_t^k v\|_{L^\I_t(0,\I;\dot B^{\s-1/q-k}_{r,r})} \lec \|v\|_{L^q_t(0,\I;\dot B^\s_{r,r})}.}
In particular, if $d\ge 3$, $1/q+d/r-\s = d/2-1$ and $\s>1/q$, then  
\EQ{ \label{to L6}
 \|\p_t^k v\|_{L^\I_t(0,\I;|\na|^k L^{2^*})} \lec \|v\|_{L^q_t(0,\I;\dot B^\s_{r,2})}.}
\end{lem}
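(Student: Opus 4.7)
The plan is to exploit that, for a free wave $v$ and each dyadic $N$, the $L^r_x$-valued function $t\mapsto P_N v(t)$ has time-Fourier support in $\{|\tau|\sim N\}$, making $P_N v$ band-limited in $t$ at scale $N$. A vector-valued Bernstein inequality in $t$ then trades each time derivative for a factor of $N$, with a Sobolev loss $N^{1/q}$, and a Minkowski integral inequality (valid because $q\le r$) recovers the $\dot B^\s_{r,r}$ norm on the right-hand side.

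Explicitly, I would first prove the Bernstein estimate
\EQN{
 \|\p_t^j g\|_{L^\I_t(\R;L^r_x)} \lec N^{j+1/q}\|g\|_{L^q_t(\R;L^r_x)}
}
for any $L^r_x$-valued $g$ whose time-Fourier transform is supported in $\{|\tau|\le N\}$, by convolving with a rescaled smooth kernel $\psi_N$ whose Fourier transform equals $1$ there and using $\|\psi_N\|_{L^{q'}}\sim N^{1/q}$, $\|\p_t^j\psi_N\|_{L^{q'}}\sim N^{j+1/q}$. Since $P_N v(t)=e^{+it|\na|}P_N\fy_++e^{-it|\na|}P_N\fy_-$ has time-Fourier support in $\{|\tau|\sim N\}$, applying this to $g=P_N v$ with $j=k$ yields, uniformly in $N$,
\EQN{
 \|\p_t^k P_N v\|_{L^\I_t(0,\I;L^r_x)} \lec N^{k+1/q}\|P_N v\|_{L^q_t(0,\I;L^r_x)}.
}
The half-line version causes no trouble: either extend $v$ to all of $\R$ as a free solution, or localise Bernstein to a window of length $\sim 1/N$ around each $t_0>0$ on which $P_N v$ varies by only a small factor (using $\|e^{\pm i(t-t_0)|\na|}-1\|_{P_N L^r_x\to L^r_x}\lec |t-t_0|N$).

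Multiplying by $N^{\s-1/q-k}$, taking the $\ell^r$-sum in $N$, and the supremum in $t$ on the left, I arrive at
\EQN{
 \|\p_t^k v\|_{L^\I_t(0,\I;\dot B^{\s-1/q-k}_{r,r})} \lec \Bigl(\sum_N N^{\s r}\|P_N v\|_{L^q_t L^r_x}^r\Bigr)^{1/r} = \bigl\|N^\s\|P_N v\|_{L^r_x}\bigr\|_{\ell^r_N L^q_t}.
}
Since $q\le r$, the Minkowski integral inequality gives $\|\cdot\|_{\ell^r_N L^q_t}\le\|\cdot\|_{L^q_t \ell^r_N}=\|v\|_{L^q_t(0,\I;\dot B^\s_{r,r})}$, which finishes the first estimate.

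For \eqref{to L6}, I would combine the first estimate with the standard Besov embedding $\dot B^{\s-1/q}_{r,r}\hookrightarrow L^{2^*}$ (valid under the scaling $\s-1/q=d/r-d/2^*$ and the positivity $\s>1/q$, which forces $r<2^*$); the factor $|\na|^k$ outside $L^{2^*}$ on the left is absorbed using $\p_t^k v_\pm=(\pm i|\na|)^k v_\pm$ for the free-wave components $v_\pm=\tfrac12(v\mp i|\na|^{-1}\p_t v)$, and the inclusion $\dot B^\s_{r,2}\hookrightarrow\dot B^\s_{r,r}$ (valid since $r\ge 2$) replaces the third Besov index $r$ by $2$ on the right. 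The only nontrivial ingredient is the vector-valued Bernstein step; everything else is routine Littlewood--Paley book-keeping.
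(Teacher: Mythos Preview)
Your Littlewood--Paley reduction and the Minkowski step are the same as the paper's; the divergence is in how you prove the frequency-localized inequality
\[
 \|\p_t^k P_N v\|_{L^\I_t(0,\I;L^r_x)} \lec N^{k+1/q}\|P_N v\|_{L^q_t(0,\I;L^r_x)}.
\]
You use a time-Bernstein inequality for band-limited functions; the paper instead applies the half-line Gagliardo--Nirenberg inequalities
\[
 \|\dot u\|_{L^q(0,\I)} \lec \|u\|_{L^q(0,\I)}^{1/2}\|\ddot u\|_{L^q(0,\I)}^{1/2},
 \qquad
 \|u\|_{L^\I(0,\I)} \lec \|u\|_{L^q(0,\I)}^{1-1/q}\|\dot u\|_{L^q(0,\I)}^{1/q}
\]
pointwise in $x$, combined with $\ddot v_j=\De v_j$ and $\|\De v_j\|_{L^r_xL^q_t}\sim 2^{2j}\|v_j\|_{L^r_xL^q_t}$.

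The gap is your treatment of the half-line, which you dismiss too quickly. Bernstein for band-limited functions is a whole-line statement; extending $v$ to $\R$ yields $\|P_N v\|_{L^q_t(\R;L^r)}$ on the right, and for free waves this is \emph{not} controlled by the $(0,\I)$ norm --- in the paper's application only $\|v\|_{\St(0,\ti T)}\lec\ra$ is known while $\|\vec v(0)\|_\cH$ may be large, so the distinction matters. Your local alternative also fails as written: the bound $\|e^{\pm i(t-t_0)|\na|}-1\|_{P_NL^r\to L^r}\lec|t-t_0|N$ only gives
\[
 \|P_Nv(t)-P_Nv(t_0)\|_{L^r}\lec|t-t_0|N\bigl(\|P_Nv_+(t_0)\|_{L^r}+\|P_Nv_-(t_0)\|_{L^r}\bigr),
\]
and the summands $P_Nv_\pm$ can be individually much larger than their sum $P_Nv$, so $P_Nv$ need not vary by a small \emph{factor} on windows of length $1/N$. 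The paper's interpolation inequalities are genuine half-line results (and are proved there from scratch), which is precisely what circumvents this difficulty.
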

The last inequality applies to any Strichartz norm of the $\dot H^1$ scaling with the condition $1/\s<q\le r$, in particular to $\St_s=L^{q_s}\dot B^{1/2}_{q_s,2}$. Hence we have  
\EQ{ \label{St L6}
 \|\vec v\|_{L^\I_t(0,\I;(1\oplus |\na|)L^{2^*})} \lec \|\vec v\|_{\St_s(0,\I)}.}
\begin{proof}
Let $v=\sum_{j\in\Z}v_j$ be a standard Littlewood-Paley decomposition in $x\in\R^d$ with $\supp\F v_j\subset\{2^{j-1}<|\x|<2^{j+1}\}$. The wave equation and the property of the L-P decomposition imply
\EQ{
 \pt \|\ddot v_j\|_{L^{r}_xL^{q}_t}=\|\De v_j\|_{L^{r}_xL^{q}_t}
 \sim 2^{2j}\|v_j\|_{L^{r}_xL^{q}_t},}
on $(0,\I)\times\R^d$. Then by the elementary interpolation inequalities
\EQ{ \label{elem intp}
 \pt\|\dot u\|_{L^q(0,\I)} \lec \|u\|_{L^q(0,\I)}^{1/2}\|\ddot u\|_{L^q(0,\I)}^{1/2},
 \pq\|u\|_{L^\I(0,\I)} \lec \|u\|_{L^q(0,\I)}^{1-1/q}\|\dot u\|_{L^q(0,\I)}^{1/q},}
we obtain 
\EQ{ \label{est vj}
 \pt \|v_j\|_{L^{r}_xL^{\I}_t} \lec \|v_j\|_{L^{r}_xL^{q}_t}^{1-1/(2q)}\|\ddot v_j\|_{L^{r}_xL^{q}_t}^{1/(2q)} \sim 2^{j/q}\|v_j\|_{L^{r}_xL^{q}_t},
 \pr \|\dot v_j\|_{L^r_xL^\I_t} \lec \|v_j\|_{L^r_xL^q_t}^{1/2-1/(2q)}\|\ddot v_j\|_{L^r_xL^q_t}^{1/2+1/(2q)} \sim 2^{j(1/q+1)}\|v_j\|_{L^{r}_xL^{q}_t}.  }
Using Minkowski and Littlewood-Paley yields
\EQ{
 \|v\|_{L^\I_t \dot B^{\s-1/q}_{r,r}} \pt\le \|2^{j(\s-1/q)}v_j\|_{\ell^r_j L^r_x L^\I_t} \lec \|2^{j\s} v_j\|_{\ell^r_j L^r_x L^q_t}
 \pn\le \|v\|_{L^q_t \dot B^\s_{r,r}}.}
The estimate on the time derivatives follows in the same way. \eqref{to L6} follows from the standard Sobolev embedding for the Besov space, $\dot B^{\s-1/q}_{r,2}\subset \dot B^{\s-1/q}_{r,r}\subset L^{2^*}$. 
 
Finally, \eqref{elem intp} can be proved as follows. 
By the density argument, we may assume that $v$ is nonzero, real analytic, and exponentially decaying. Then $\dot v$ has at most countable number of zeros $0\le z_1<z_2<\cdots$ with no accumulation. For each $z_k<z_{k+1}$ we have, denoting $[v]^q:=|v|^{q-1}v$,   
\EQ{
 \|\dot v\|_{L^1(z_k,z_{k+1})}^q \pt= |v(z_k)-v(z_{k+1})|^q \le 2^{q-1} |[v(z_k)]^q-[v(z_{k+1})]^q|
 \pr\le 2^{q-1}\int_{z_k}^{z_{k+1}} q|v|^{q-1}|\dot v| dt \le 2^{q-1}q\|v\|_{L^q(z_k,z_{k+1})}^{q-1}\|\dot v\|_{L^q(z_k,z_{k+1})},}
and similarly,
\EQ{
 \|\dot v\|_{L^\I(z_k,z_{k+1})}^q \le \re\int_{z_k}^{z_{k+1}}q[\dot v]^{q-1}\overline{\ddot v}dt \le q\|\dot v\|_{L^q(z_k,z_{k+1})}^{q-1}\|\ddot v\|_{L^q(z_k,z_{k+1})}.}
The same argument yields the second inequality of \eqref{elem intp}. Interpolating the above two estimates by H\"older  
\EQ{
 \|\dot v\|_{L^q(z_k,z_{k+1})} \le 2^{(q-1)/q^2}q^{1/q} \|v\|_{L^q(z_k,z_{k+1})}^{(q-1)/q^2}\|\dot v\|_{L^q(z_k,z_{k+1})}^{1/q^2+(1-1/q)^2}\|\ddot v\|_{L^q(z_k,z_{k+1})}^{(q-1)/q^2},}
and so $\|\dot v\|_{L^q(z_k,z_{k+1})} \le [2q^{1-1/q} \|v\|_{L^q(z_k,z_{k+1})}\|\ddot v\|_{L^q(z_k,z_{k+1})}]^{1/2}$. Taking the $\ell^q$ sum over $k$, we obtain the first inequality of \eqref{elem intp}. 
\end{proof}
\begin{rem}
It is obvious from the homogeneous nature that the above lemma fails on any bounded set in $\R$ for any $q<\I$. If such an inequality would hold, then it must be uniform for the rescaling $u(t,x)\mapsto u(\la t,\la x)$, but the $L^q_t(I)$ norm decays as $I$ shrinks to a point $t_0\in\R$, while the $L^\I_t(I)$ norm converges to the value at $t_0$. 
\end{rem}

The following lemma allows us to detach exterior radiation which is small in $\Ra_R^T$ from any solution of (CW). 

\begin{lem}[Detaching lemma] \label{lem:detach}
Let $B\in\sB_d$, $\ti T \ge T>0$, and $u\in\Sol([0,T])\cup\Sol([0,T))$ satisfy $\|\vec u(0)\|_{\Ra_B^{\ti T}}=\ra<\e_S$. Then 

{\rm (I)} There are a free solution $v$, and two strong solutions $u\xt$ and $u\dt$ of (CW), defined on $[0,\ti T)$ and on $[0,T)$ respectively, satisfying 
\EQ{ \label{v ux ud}
 \pt \vec v(0)=\vec u(0) \ton B^\cmpl,\ \|\vec v\|_{L^\I_t(0,\ti T;\cHR{B_{+t}})\cap\St(0,\ti T)} \lec \ra,
 \pr \vec u\xt(t)=\vec u(t) \ton (B_{+t})^\cmpl, \pq \|\vec u\xt-\vec v\|_{(L^\I\cH\cap\St)(0,\ti T)} \lec \ra^{2^*-1}, 
 \pr \vec u\dt(t)=\vec u(t) \ton B_{+t}, \pq \|\vec u-\vec u\dt-\vec v\|_{(L^\I\cH\cap\St)(0,T)} \lec \ra,}
and $\|\vec v(0)\|_\cH\lec\|\vec u(0)\|_{\cHR{B^\cmpl}}$. More precisely, there is $w\in C([0,\ti T);\cH)$ such that $\vec u=\vec u\dt+w$ for $0\le t<T$ and $\|w-\vec v\|_{(L^\I\cH\cap\St)(0,\ti T)}\lec\ra$. If $\ti T=\I$, then 
\EQ{ \label{ux L6}
 \|w\|_{L^\I(0,\I;(1\oplus |\na|)L^{2^*}(\R^d))}+\|\vec u\xt\|_{L^\I(0,\I;(1\oplus |\na|)L^{2^*}(\R^d))} \lec \ra.} 

{\rm (II)} There exists $\{\vec u^\te\}_{0\le \te\le 1}$ in $\Sol([0,T])$ or $\Sol([0,T))$, which is $C^1$ in $\te$, such that  $\vec u^0=\vec u$, $\vec u^1=\vec u\dt$, $\vec u^\te=\vec u$ on $B_{+t}(0,T)$, $\|\vec u^\te(0)\|_{\Ra_B^{\ti T}}\lec\ra$, and $\vec u^\te$ satisfies {\rm (I)} for the fixed $u\dt$ and some $\te$-dependent $u\xt$ and $v$ for all $\te\in[0,1]$. 

{\rm (III)} Although such $u\dt$ and $u\xt$ in {\rm (I)} are not unique, we can define a $C^1$ map $A:\vec u(0)\mapsto\vec u\dt(0)\in\cH$ locally around each $\vec u(0)$ and fixed $B$. Moreover, it satisfies 
\EQ{ \label{Lip A}
 \|A(\fy^0)-A(\fy^1)-(\fy^0-\fy^1)\|_{\cH} \lec \|\fy^0-\fy^1\|_{\cHR{B^\cmpl}}.}
\end{lem}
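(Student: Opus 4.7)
The plan is to build all three objects from a single near-optimal extension provided by the definition of $\Ra_B^{\ti T}$. By the lower semicontinuity noted just before \eqref{Ra bd Hex}, pick a minimizer $\psi\in\cH$ with $\psi=\vec u(0)$ on $B^\cmpl$ and set $\vec v(t):=U(t)\psi$, so that $\|\vec v\|_{L^\infty_t(0,\ti T;\cHR{B_{+t}})\cap\St(0,\ti T)}\lec\ra$ and $\|\psi\|_\cH\lec\|\vec u(0)\|_{\cHR{B^\cmpl}}$. Take $u\xt$ to be the (CW) solution with $\vec u\xt(0)=\psi$: since $\|\vec v\|_{\St(0,\ti T)}\lec\ra<\e_S$, small-data Strichartz theory yields $u\xt\in\Sol([0,\ti T))$ together with $\|\vec u\xt-\vec v\|_{L^\infty\cH\cap\St(0,\ti T)}\lec\|\vec v\|_{\St_m}^{2^*-2}\|\vec v\|_{\St_s}\lec\ra^{2^*-1}$. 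The identity $\vec u\xt(t)=\vec u(t)$ on $(B_{+t})^\cmpl$ then follows from finite speed of propagation applied to $\psi=\vec u(0)$ on $B^\cmpl$, via $(B^\cmpl)_{-t}=(B_{+t})^\cmpl$.

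For $u\dt$, the required equality $\vec u\dt(t)=\vec u(t)$ on $B_{+t}$ throughout $[0,T)$ is enforced through backward domain of dependence: since the past cone from any $(t,x)$ with $x\in B_{+t}$ intersects $\{s=0\}$ inside $B_{+2t}$, it suffices to ensure $\vec u\dt(0)=\vec u(0)$ on $B_{+2T}$ (with an exhaustion $T_n\nearrow\infty$ combined with the asymptotic Huygens bound on $\vec v$ when $T=\infty$). Concretely I would take $\vec u\dt(0):=\vec u(0)-X_{(B_{+2T})^\cmpl}\psi$, so that the detached piece $\vec u(0)-\vec u\dt(0)$ lives in $(B_{+2T})^\cmpl$ and is close to $\psi$ in $\cH$ (using $\|\psi\|_{\cHR{B_{+2T}}}\lec\ra$ from the $L^\infty_t\cHR{B_{+t}}$ bound on $\vec v$ at $t=2T$ combined with finite propagation). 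Set $w:=\vec u-\vec u\dt$ on $[0,T)$ and extend $w$ past $T$ by its own wave equation, which on $(B_{+t})^\cmpl$ reduces essentially to $\vec u\xt$ by Step~1. The bound $\|w-\vec v\|_{L^\infty\cH\cap\St(0,\ti T)}\lec\ra$ then comes from a Strichartz perturbation estimate for $\square(w-\vec v)=f'(u\dt+w)-f'(u\dt)$, with source controlled via $|f''(u\dt+\theta w)||w|\lec(|u\dt|^{2^*-2}+|w|^{2^*-2})|w|$, exploiting that $w\approx\vec v$ is small in $\cHR{B_{+t}}$ precisely where $u\dt$ is concentrated; the bound \eqref{ux L6} when $\ti T=\infty$ then follows from the time-Sobolev inequality \eqref{St L6}.

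For (II) I would interpolate linearly, $\vec u^\theta(0):=\vec u(0)-\theta(\vec u(0)-\vec u\dt(0))$. Since $\vec u(0)-\vec u\dt(0)$ is supported in $(B_{+2T})^\cmpl$, the relation $\vec u^\theta(0)=\vec u(0)$ on $B_{+2T}$ persists for all $\theta\in[0,1]$, finite propagation gives $\vec u^\theta=\vec u$ on the spacetime cone $B_{+t}(0,T)$, and Part~(I) applied with the scaled radiation $(1-\theta)\psi$ produces the $\theta$-dependent $\vec u\xt^\theta$ and $\vec v^\theta$ with the same $u\dt$. For (III), the selection $\fy\mapsto\psi(\fy)$ is a convex minimization over an affine subspace of $\cH$, which admits a $C^1$ selection by projecting onto the closed convex set of minimizers; the map $A(\fy)=\fy-X_{(B_{+2T})^\cmpl}\psi(\fy)$ inherits this smoothness, and \eqref{Lip A} reduces to the linear bound $\|\psi(\fy^0-\fy^1)\|_\cH\lec\|\fy^0-\fy^1\|_{\cHR{B^\cmpl}}$ already established in Step~1.

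The chief technical obstacle will be the nonlinear Strichartz control of $w-\vec v$ on the full interval $[0,\ti T)$ without a global bound on $\|u\|_{\St}$: because $u\dt$ can carry full soliton-scale energy (so $\|u\dt\|_{\St_m}$ is finite only on compact subintervals), the cross term $f''(u\dt)w$ is only tame thanks to the spatial localization of $w$ in $\cHR{B_{+t}}$ relative to the support of $u\dt$, and this localization must be passed through the mixed Strichartz norms via appropriate cutoffs and an iterative estimate along a partition of $[0,\ti T)$ rather than a single global application. A secondary subtlety is the clean continuation of $w$ past $T$, since $\vec u=\vec u\dt+w$ holds only on $[0,T)$ while the Strichartz bound on $w-\vec v$ must extend to $[0,\ti T)$, requiring one to separate the nonlinear evolution of $u,u\dt$ from the essentially-linear continuation of $w$ after $T$.
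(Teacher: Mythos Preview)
Your construction of $u\dt$ from the initial time has a genuine gap. You want $\vec u\dt(t)=\vec u(t)$ on the \emph{expanding} cone $B_{+t}$, and you try to enforce this by making $\vec u\dt(0)=\vec u(0)$ on $B_{+2T}$, removing a piece of $\psi$ supported in $(B_{+2T})^\cmpl$. But for this piece to be close to $\psi$ you need $\|\psi\|_{\cHR{B_{+2T}}}\lec\ra$, and that bound is \emph{not} available: the hypothesis gives $\|\vec v(t)\|_{\cHR{B_{+t}}}\lec\ra$ at each time $t$, which at $t=0$ only controls $\|\psi\|_{\cHR{B}}$, and finite propagation from $\|\vec v(2T)\|_{\cHR{B_{+2T}}}$ backward recovers control only on $(B_{+2T})_{-2T}\supset B$, not on $B_{+2T}$. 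On $B_{+2T}\setminus B$ you have $\psi=\vec u(0)$, which carries the full interior solution and is in no way small. (There is also the side issue that $X_{(B_{+2T})^\cmpl}\psi$ does not vanish on $B_{+2T}$; you mean $(1-X_{B_{+2T}})\psi$, but the real obstruction is the missing bound.)

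The paper avoids this by going backward in time: it solves
\[
(\p_t^2-\De)w=f'(u\xt)-f'(u\xt-w),\qquad \vec w(\ti T)=(1-X_{B_{+\ti T}})\vec u\xt(\ti T),
\]
so that the required smallness $\|\vec u\xt(\ti T)\|_{\cHR{B_{+\ti T}}}\lec\ra$ \emph{is} available at the terminal time, and backward finite propagation gives $w=0$ on $B_{+t}$ automatically. This also dissolves what you call the ``chief technical obstacle'': since the equation for $w$ is written against the globally small background $u\xt$ rather than $u\dt$, the Strichartz estimate is a single small-data application, with no need for cutoffs, partitions, or the localization gymnastics you anticipate. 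Finally, for (III) your proposed $C^1$ selection of minimizers is unreliable because the functional $\|U(t)\psi\|_{L^\infty_t(\cHR{B_{+t}})\cap\St}$ is not strictly convex (it involves an $L^\infty$ and an intersection norm), so minimizers need not be unique; the paper instead fixes a base point $\vec v^0(0)$ and takes $\vec v(0)=\vec v^0(0)+X_{B^\cmpl}(\vec u(0)-\vec u^0(0))$, which is manifestly affine in the data and makes the $C^1$ property and \eqref{Lip A} immediate.
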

Note that no energy bound is required on $u$, while the condition in $\Ra_B^{\ti T}$ can be satisfied either by localization as in \eqref{loc Ra} or by dispersion as in \eqref{disp Ra}, which is useful respectively for concentrating blow-up and for scattering solutions. 
\begin{proof}
The definition of $\Ra_B^{\ti T}$ yields a free solution $\vec v$ such that $\vec v(0)=\vec u(0)$ on $B^\cmpl$ and 
\EQ{
 \|\vec u(0)\|_{\Ra_B^{\ti T}} = \|\vec v\|_{L^\I_t(0,\ti T;\cHR{B_{+t}})\cap \St(0,\ti T)} \lec \|\vec v(0)\|_\cH \sim \|\vec u(0)\|_{\cHR{B^\cmpl}}.} 
Since $\ra<\e_S$, there is a unique $u\xt\in\Sol([0,\ti T])$ such that 
\EQ{
 \vec u\xt(0)=\vec v(0), \pq \|\vec u\xt-\vec v\|_{(L^\I\cH\cap\St)(0,\ti T)} \lec \|v\|_{\St(0,\ti T)}^{2^*-1} =\ra^{2^*-1},}
which, together with the above estimate on $v$, implies that 
\EQ{
 \|\vec u\xt\|_{L^\I_t(0,\ti T;\cHR{B_{+t}})\cap \St(0,\ti T)} \sim \ra.}
In addition, if $\ti T=\I$ then combining the above with \eqref{St L6} yields \eqref{ux L6} for $u\xt$.  
The propagation speed of (CW) implies that $\vec u\xt=\vec u$ on $(B_{+t})^\cmpl=(B^\cmpl)_{-t}$. 
If $\ti T<\I$, then let $w$ be the solution of  
\EQ{ \label{eq w}
 (\p_t^2-\De)w=f'(u\xt)- f'(u\xt-w),}
with $\vec w(\ti T)=(1-X_{B_{+\ti T}})u\xt(\ti T)$, where $X_{B_{+\ti T}}$ is the extension operator for $B_{+\ti T}$. Then 
\EQ{
 \|\vec w(\ti T)-\vec v(\ti T)\|_\cH \le \|\vec u\xt(\ti T)-\vec v(\ti T)\|_\cH+\|X_{B_{+\ti T}}\vec v(\ti T)\|_\cH \lec \ra,}
and so $\|U(t-\ti T)\vec w(\ti T)\|_{\St(0,\ti T)}\lec\ra$ by the Strichartz estimate. Also we have 
\EQ{
 \|\vec w-U(t-\ti T)\vec w(\ti T)\|_{\St} \lec \|\square w\|_{\St_s^*} \pt\lec \|w\|_{\St}(\|u\xt\|_{\St}+\|w\|_{\St})^{2^*-2} 
 \pr\lec \ra\|w\|_{\St}+\|w\|_{\St}^{2^*-1},
 }
thereby we obtain $w\in C([0,\ti T];\cH)$ satisfying 
$\|\vec w-\vec v\|_{L^\I(0,\ti T;\cH)} + \|w\|_{\St(0,\ti T)} \lec \ra$ 
as well as $\vec w=0$ on $B_{+t}([0,\ti T])$ by the finite propagation speed. 
Let $u\dt:=u-w\in C([0,T);\cH)$. Then we have, for $0<t<T$, $\vec u\dt=\vec u$ on $B_{+t}$ and  
\EQ{ \label{eq udt}
 \square u\dt=f'(u)-f'(u\xt)+f'(u\xt-w)=\CAS{f'(u)=f'(u\dt) &\ton{B_{+t}} \\ f'(u-w)=f'(u\dt) &\ton{(B_{+t})^\cmpl}},}
since $\vec w=0$ on $B_{+t}$ and $\vec u=\vec u\xt$ on $(B_{+t})^\cmpl$. 
$\vec u\dt-\vec u+\vec v=\vec v-\vec w$ has been already estimated above. 

To define $u^\te$ in (II), let $w^\te$ be the solution of \eqref{eq w} with $\vec w^\te(\ti T)=\te\vec w(\ti T)$ and let $u^\te:=u-w^\te$. Then obviously $w^0=0$, $w^1=w$, $\vec w=0$ on $B_{+t}(0,\ti T)$, and so $\square u^\te=f'(u^\te)$ in the same way as \eqref{eq udt}. The same estimate as above yields $\|w^\te-\te\vec w\|_{L^\I\cH\cap\St(0,\ti T)}\lec\ra$, and hence $\|\vec u^\te(0)-(1-\te)\vec u(0)-\te \vec u\dt(0)\|_\cH\lec\ra$, and $\|\vec u^\te(0)-\vec u(0)+\te v(0)\|_\cH\lec\ra$, which implies that 
\EQ{
 \|\vec u^\te(0)\|_{\Ra_B^{\ti T}} \le \|\vec u(0)\|_{\Ra_B^{\ti T}}+\te\|\vec v\|_{L^\I_t(0,\ti T;\cHR{B_{+t}})\cap \St(0,\ti T)}+C\ra \lec \ra.}
Hence $\vec u^\te$ satisfies (I) with the above constructed $u\dt$, the free solution $\vec v^\te(t):=\vec v(t)+U(t)[\vec u^\te(0)-\vec u(0)]$ and the associated nonlinear solution $u\xt$ (dependent on $\te$).

In the case $\ti T=\I$, we define a sequence $\vec w_n\in C([0,n];\cH)$ with $\ti T=n$ as above. Then the uniform bound allows us to take a weak limit along a subsequence to $w\in C([0,\I);\cH)\cap\St(0,\I)$ solving \eqref{eq w}, the estimates and $\vec w=0$ on $B_{+t}(0,\I)$. For (II), let $w^\te$ be the solution of the integral equation 
\EQ{ \label{Ieq w}
 \vec w=\int_\I^t U(t-t')(0,f'(u\xt+w)-f'(u\xt))dt'}
obtained by the iteration starting from $\te w$. Then $u^\te:=u-w^\te$ satisfies the desired properties, which is seen by the same argument as above. 

To define the map $A:\vec u(0)\mapsto\vec u\dt(0)$ in (III), we perturb $\vec u(0)$ around some fixed $\vec u^0(0)\in\cH$ satisfying the assumption. Let $\vec v^0$ be the free solution chosen as above for $\vec u^0$. For $\vec u(0)\in\cH$ close to $\vec u^0(0)$, we have 
\EQ{
 \|\vec u(0)\|_{\Ra_B^{\ti T}}\le\|\vec u^0(0)\|_{\Ra_B^{\ti T}}+C\|\vec u(0)-\vec u^0(0)\|_\cH<\e_S.}
We choose the free solution $\vec v$ for $\vec u$ as a perturbation from $\vec v^0$, putting 
\EQ{
 \vec v(0):=\vec v^0(0)+X_{B^\cmpl}(\vec u(0)-\vec u^0(0)).} 
Then in the same way as above, if $\ti T<\I$, let $\vec u\xt\in\Sol([0,\ti T])$ with $\vec u\xt(0)=\vec v(0)$, let $w$ be the solution of \eqref{eq w}, and let $\vec u\dt=\vec u-\vec w$. By the Strichartz estimate, we see that the maps $\vec u(0)\mapsto \vec v(0)=\vec u\xt(0)\mapsto \vec u\xt(\ti T)\mapsto \vec w(\ti T)\mapsto \vec w(0)\mapsto\vec u\dt(0)$ are $C^1$, where $\vec u\xt$ and $\vec w$ are Lipschitz with respect to $\vec u(0)\in\tHR{B^\cmpl}$, leading to \eqref{Lip A}. 

In the case $\ti T=\I$, we also fix $\vec w^0\in C([0,\I);\cH)$ for $u^0$, and then let $w$ be the solution of \eqref{Ieq w} obtained by the iteration starting from $w^0$. Then $w\in C([0,\I);\cH)$ solves $\square w=f'(u\xt+w)-f'(u\xt)$ on $t>0$, $\vec w=0$ on $B_{+t}$, and 
\EQ{
 \|\vec w-\vec w^0\|_{L^\I\cH\cap\St(0,\I)}\lec\|\vec v(0)-\vec v^0(0)\|_\cH.} 
Moreover, the maps $\vec u(0)\mapsto\vec u\xt(0)\mapsto \vec w(0)\mapsto \vec u\dt(0)$ are $C^1$, where $\vec u\xt$ and $\vec w$ are Lipschitz with respect to $\vec u(0)\in\tHR{B^\cmpl}$, leading to \eqref{Lip A}. 
\end{proof}

The following lemma is crucial to show that the ground state component is small in the region where the solution is dispersive. 
\begin{lem}[Reverse Sobolev for the ground state] 
For any $d\ge 3$, there exists $C=C(d)>0$ such that for any $B\in\sB_d$, 
\EQ{ \label{rev Sob}
 \|\vec W\|_{\ti\cH(B)}+\|g_\pm\|_{\ti\cH(B)} \le C\|W\|_{L^{2^*}(B)},
 \pq \|g_\pm\|_{\cHR{B}} \le C\|\vec W\|_{\cHR{B}}.}
\end{lem}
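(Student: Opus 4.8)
\emph{Proof plan.} The idea is to reduce both inequalities to two elementary pointwise facts about the explicit profile $W$ and the eigenfunction $\ro$, and then simply unwind the definitions of $\ti\cH(B)$ and $\cHR{B}$. First, the pointwise facts. From the explicit formula for $W$ one computes the exact identity $|\na W(x)|^2 = d^{-2}|x|^2\,W(x)^{2^*}$, in particular $|\na W(x)|^2 \lec \LR{x}^2 W(x)^{2^*}$. On the other hand $\ro$ is the ground state of $L_+=-\De-f''(W)$ with eigenvalue $-k^2<0$ strictly below $\inf\s_{\mathrm{ess}}(L_+)=0$; since $f''(W)\in C_b^\I$, elliptic bootstrap on $(-\De+k^2)\ro=f''(W)\ro$ makes $\ro$ smooth with all derivatives in $L^2$, and the Agmon/Combes--Thomas estimate gives $|\ro(x)|+|\na\ro(x)|\lec e^{-k'|x|}$ for some $k'>0$. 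Comparing exponential with polynomial decay, $|\ro(x)|+|\na\ro(x)|\lec W(x)^{2^*/2}$ and $|\ro(x)|\lec 1$, hence also $|\ro(x)|^{2^*}\lec W(x)^{2^*}$; and $m:=\ro/W$ together with $\na m$ is bounded and exponentially decaying (using $W>0$ smooth, $W(x)\asymp\LR{x}^{-(d-2)}$), so $m\in L^\I(\R^d)$ and $\na m\in L^d(\R^d)$.

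For the first inequality, write $\vec W=(W,0)$, so $\|\vec W\|_{\ti\cH(B)}=\|\na W\|_{L^2(B)}+\|W\|_{L^{2^*}(B)}$ and, up to a $k$-dependent constant, $\|g_\pm\|_{\ti\cH(B)}\sim\|\na\ro\|_{L^2(B)}+\|\ro\|_{L^2(B)}+\|\ro\|_{L^{2^*}(B)}$. Integrating the $\ro$-bounds and using $\int_B W^{2^*}\le\|W\|_{L^{2^*}}^{2^*}=:M<\I$ to trade $(\int_B W^{2^*})^{1/2}$ for $M^{1/2-1/2^*}(\int_B W^{2^*})^{1/2^*}$ gives $\|g_\pm\|_{\ti\cH(B)}\lec\|W\|_{L^{2^*}(B)}$. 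For $\na W$ it remains to prove the ``reverse Sobolev'' bound $\int_B\LR{x}^2W^{2^*}\,dx\lec\big(\int_B W^{2^*}\,dx\big)^{2/2^*}$: writing $t:=\int_B W^{2^*}\,dx$ and splitting $B$ at radius $R\ge 1$, $\int_B\LR{x}^2W^{2^*}\le R^2 t+\int_{|x|>R}\LR{x}^2W^{2^*}\,dx\lec R^2 t+R^{2-d}$, where finiteness of $\int_{\R^d}|x|^2W^{2^*}\,dx$ uses $d\ge 3$; choosing $R\sim t^{-1/d}$ for $t\le 1$ and $R=1$ for $t\in(1,M]$ yields $\lec t^{2/2^*}$ in both cases. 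Combining, $\|\vec W\|_{\ti\cH(B)}\lec\|W\|_{L^{2^*}(B)}$, which with the bound on $\|g_\pm\|_{\ti\cH(B)}$ proves the first inequality.

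For the second inequality, observe that the $L^2$-part of the quotient norm is attained by extension by zero, so $\|\vec W\|_{\cHR{B}}^2=\|W\|_{\dot H^1\!\downharpoonright\! B}^2$ and $\|g_\pm\|_{\cHR{B}}^2=\tfrac1{2k}\|\ro\|_{\dot H^1\!\downharpoonright\! B}^2+\tfrac k2\|\ro\|_{L^2(B)}^2$, where $\|f\|_{\dot H^1\!\downharpoonright\! B}:=\inf\{\|\na\psi\|_{L^2}\mid \psi=f\ton B\}$. The $L^2$ term is handled as before: $\|\ro\|_{L^2(B)}^2\lec\int_B W^{2^*}=\|W\|_{L^{2^*}(B)}^{2^*}\le M^{1-2/2^*}\|W\|_{L^{2^*}(B)}^2\lec\|\vec W\|_{\cHR{B}}^2$, using $\|W\|_{L^{2^*}(B)}\le\|\vec W\|_{\ti\cH(B)}\lec\|\vec W\|_{\cHR{B}}$. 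For the $\dot H^1$ term, pick $\psi\in\dot H^1$ with $\psi=W$ on $B$ and $\|\na\psi\|_{L^2}\le 2\|W\|_{\dot H^1\!\downharpoonright\! B}$; then $m\psi=\ro$ on $B$, and by Leibniz, Sobolev $\dot H^1\hookrightarrow L^{2^*}$, and H\"older $\tfrac12=\tfrac1d+\tfrac1{2^*}$, $\|\na(m\psi)\|_{L^2}\le\|m\|_{L^\I}\|\na\psi\|_{L^2}+\|\na m\|_{L^d}\|\psi\|_{L^{2^*}}\lec\|\na\psi\|_{L^2}\lec\|W\|_{\dot H^1\!\downharpoonright\! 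B}$, hence $\|\ro\|_{\dot H^1\!\downharpoonright\! B}\lec\|\vec W\|_{\cHR{B}}$. Adding the two contributions gives $\|g_\pm\|_{\cHR{B}}\lec\|\vec W\|_{\cHR{B}}$.

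The main obstacle is just making the two pointwise inputs rigorous and then being careful with the exponents. Specifically: (i) the exponential decay of $\ro$ and $\na\ro$, and the resulting multiplier property $m\in L^\I$, $\na m\in L^d$ — standard spectral theory plus elliptic regularity, but not a one-liner; and (ii) the bookkeeping in the reverse-Sobolev step, whose real content is that, although the pointwise ratio $|\na W(x)|^2/W(x)^{2^*}\sim|x|^2$ is unbounded, the split-and-optimize argument controls the integrated quantity \emph{uniformly over all Borel $B$}, precisely because $\int_{\R^d}|x|^2W^{2^*}\,dx<\I$ for $d\ge 3$. I expect step (ii) to be the place where one must take the most care.
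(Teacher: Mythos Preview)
Your argument is correct and follows essentially the same route as the paper. The core of both proofs is the same split-and-optimize estimate for $\|\na W\|_{L^2(B)}$ (your choice $R\sim t^{-1/d}$ coincides with the paper's $R=\|W\|_{L^{2^*}(B)}^{-2/(d-2)}$), and for $g_\pm$ both use the multiplier $m=\ro/W\in L^\I\cap W^{1,d}$ together with Leibniz and the Sobolev embedding $\dot H^1\hookrightarrow L^{2^*}$. The only cosmetic differences are that you work from the exact identity $|\na W|^2=d^{-2}|x|^2W^{2^*}$ where the paper uses the asymptotics $|\na W|\sim|x|\LR{x}^{-d}$, and you spell out the Agmon-type exponential decay of $\ro$ explicitly whereas the paper simply asserts $\ro/W\in L^\I\cap W^{1,d}$; for the $\ti\cH(B)$ bound on $g_\pm$ you use pointwise decay directly while the paper invokes the multiplier argument again, but these lead to the same conclusion.
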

\begin{rem}
This lemma obviously fails for any other stationary solutions, since they have indefinite sign. To see that, concentrate $B$ at any zero point.
\end{rem}
\begin{proof}
For the first estimate, we may assume that $\|W\|_{L^{2^*}(B)}<1$, since the left hand side is uniformly bounded. Let $R:=\|W\|_{L^{2^*}(B)}^{-2/(d-2)}>1$. 
Since $W\sim\LR{x}^{2-d}$ and $|\na W|\sim |x|\LR{x}^{-d}$, we have 
\EQ{
 \|\na W\|_{L^2(B)}^2 \pt\le \int_{B\cap\{|x|<R\}}|\na W|^2dx + \int_{|x|>R}|\na W|^2 dx 
 \pr\lec \int_{B}R^2|W|^{2^*}dx + R^{2-d} \sim R^{2-d} = \|W\|_{L^{2^*}(B)}^{2}.}
The estimate on $g_\pm$ follows from that $\ro/W\in L^\I\cap W^{1,d}(\R^d)$. Indeed, let $\chi:=\ro/W$ and let $\fy=\vec W$ on $B$. Then we have $g_\pm=(2k)^{-1/2}(1,\pm k)\chi\fy_1$ on $B$, and  
\EQ{
 \|(1,\pm k)\chi\fy_1\|_{\cH}
 \le \|\chi\|_{L^\I}\|\na\fy_1\|_2+(\|\na\chi\|_{L^d}+\|\chi_1\|_{L^d})\|\fy_1\|_{L^{2^*}} \lec \|\fy\|_\cH.}
Hence $\|g_\pm\|_{\cHR{B}}\lec\|\vec W\|_{\cHR{B}}$. The estimate in $\ti\cH(B)$ is similar. 
\end{proof}

\section{Center-stable manifold with large radiation}
Now we can extend the center-stable manifold by adding large radiation.
Fix $\ra_m>0$ such that $\ra_m\ll \de_m$. 
 Let $\M_3$ be the totality of $\vec u(0)\in\cH$ such that for the solution $u$ we can apply the detaching Lemma \ref{lem:detach} with 
\EQ{ \label{def M3}
 \ti T=\I, \pq \vec u\dt(0)\in\M_0, \pq \ra+\|\sT_{\vec u\dt(0)}\vec W\|_{\cHR{B^\cmpl}}<\ra_m.} 
Then $\M_0\subset\M_3$ by using the trivial case $B=\R^d$ and $u\dt=u$.  
 The invariance of $\M_3$ for $\T,\cS$ is inherited from $\M_0$, which is also clear from the definition. 

For each point $\fy\in\M_3$, Lemma \ref{lem:detach} gives a neighborhood $O\ni\fy$ and a $C^1$ map $A:O\ni\vec u(0)\mapsto \vec u\dt(0)\in\cH$ such that $A(\fy)\in\M_0$. Reducing $O$ if necessary, we may assume that $A(O)$ is within the domain of $M_+$ defined in \eqref{def M+}, and that the last condition of \eqref{def M3} holds all over $O$.

Then we have $\M_3\cap O=(M_+\circ A)^{-1}(0)$. Indeed $\supset$ is clear from the definition of $\M_3$ and $M_+$. If $M_+(A(\psi))>0$, then the solution $u\dt$ starting from $A(\psi)$ blows up in some finite $T>0$, so does the solution $u$ starting from $\psi$, because of \eqref{v ux ud}, 
\EQ{ \label{far in L6}
 \dist_{L^{2^*}}(u(t),\Static(W)_1) \pt\ge \dist_{L^{2^*}}(u\dt(t),\Static(W)_1)-\|u(t)-u\dt(t)\|_{L^{2^*}} \pr\gg \de_m-O(\ra) \sim \de_m,}
for $t$ close to $T$. On the other hand, if $\vec u(0)\in\M_3$ with $\vec u\in\Sol([0,T))$, then
\EQ{ \label{close in L6}
 \dist_{L^{2^*}}(u(t),\Static(W)_1) \pt\le \dist_{L^{2^*}}(u\dt(t),\Static(W)_1)+\|u(t)-u\dt(t)\|_{L^{2^*}} \pr\lec \de_m+\ra \sim \de_m}
for $t$ close to $T$. Hence $M_+(A(\psi))>0$ implies that $\psi\not\in\M_3$. 
If $M_+(A(\psi))<0$, then \eqref{v ux ud} with Strichartz implies that the solution $u$ starting from $\psi$ also scatters to $0$, contradicting \eqref{close in L6}, and so $\psi\not\in\M_3$. 

In order to conclude that $\M_3$ is a $C^1$ manifold of codimension $1$, it now suffices to show that the $C^1$ functional $M_+\circ A$ does not degenerate on its zero set $\M_3$. Indeed, if $M_+(A(\psi))=0$ then $\p_h M_+(A(\psi+h\sT_{A(\psi)} g_+))\sim 1$, because by the Lipschitz property of $A$ \eqref{Lip A} and $m_+$ \eqref{tan of La}, and the last condition of \eqref{def M3} together with \eqref{rev Sob}, we have 
\EQ{
 \pt A(\psi+h\sT_{A(\psi)} g_+)=A(\psi)+h[\sT_{A(\psi)} g_+ + O(\ra_m;\cH)],
 \pr M_+(A(\psi+h\sT_{A(\psi)} g_+))=h+O(h\ra_m).}

By Lemma \ref{lem:detach}(II), we can connect each $\fy\in\M_3$ with some $\psi\in\M_0$ by a $C^1$ curve, which is included in an enlarged $\M_3$ for which the last bound in \eqref{def M3} is replaced with $C\ra_m$ for some constant $C>1$. Including those curves connecting $\M_3$ to $\M_0$, we obtain a slightly bigger  manifold $\ti\M_3$, which is $C^1$ and connected with codimension $1$.  

Let $\M_4$ be the maximal evolution of $\ti\M_3$ (in the same way as we define $\M_1$ from $\M_0$). Then $\M_4$ is a connected $C^1$ manifold of codimension $1$, which is invariant by the flow, $\T$ and $\cS$, and $\M_4\supset\M_1\cup\ti\M_3$. Every solution $\vec u\in\Sol([0,T))$ on $\M_4$ satisfies 
\EQ{
 \limsup_{t\nearrow T}\dist_{(1\oplus|\na|)L^{2^*}}(\vec u(t),\Static(W))\lec\de_m.}
Around each point on $\M_4$, there is a small open ball which is split into two open sets by $\M_4$, such that all the solutions starting from one of them blow up in finite time, near which time
\EQ{
 \dist_{L^{2^*}}(u(t),\Static(W)_1)\gg\de_m,}
and all those starting from the other scatter to $0$ as $t\to\I$. 

On the other hand, if $\vec u\in\Sol([0,\I))$ scatters to $\Static(W)$, namely
\EQ{ \label{scat2W}
 \exists\fy\in\cH,\pq d_W(\vec u(t)-U(t)\fy)\to 0\pq(t\to\I),}
then $\vec u(0)\in\M_4$. This is because Lemma \ref{lem:est Ra} implies that we can detach the free radiation $U(t)\fy$ at some large $t=T$, so that $\vec u\dt(t)=\sT_{\vec u(t)}\vec W+O(\ra)$ in $\cH$ for all $t\ge T$ with $\ra\ll\ra_m\ll\de_m$. The last condition of \eqref{def M3} is ensured by \eqref{est Ra}.

Finally, let $\M_5$ be the Lorentz extension of $\M_4$, defined in the same way as for $\M_2$ from $\M_1$. Note that all the solutions on $\M_4$ have the space-time maximal regions as in \eqref{bcurv on M} in the case of blow-up, since the remainder $u\xt$ is globally small in the Strichartz norms. Thus we obtain a connected $C^1$ manifold $\M_5\supset \Soliton(W)\cup\M_4\cup\M_2$ with codimension $1$ in $\cH$, which is invariant by the flow, $\T$, $\cS$ and the Lorentz transform. If $\vec u\in\Sol([0,\I))$ satisfies \eqref{scat2soliton}, then the scaling invariance and dispersion of the free wave $v$ implies that 
\EQ{
 P(\vec u)=\lim_{t\to\I}P(\vec W_{0,0,p(t)})+P(\vec v)
 = \lim_{t\to\I}p(t)E(W)+P(\vec v).}
Hence $p(t)$ converges to some $p_*\in\R^d$, and then $\vec W_{\la(t),c(t),p(t)}-\vec W_{\la(t),c(t),p_*}\to 0$ in $\cH$. Take a Lorentz transform which maps $\vec W_{0,0,p_*}$ to $\vec W$, and apply it to $\vec u$. Then we obtain another global solution satisfying \eqref{scat2soliton} with $p(t)\equiv 0$, namely scattering to $\Static(W)$, and so belonging to $\M_4$. Hence $u$ is on $\M_5$. 

Since each Lorentz transform defines a local $C^1$ diffeo around each solution, there is a neighborhood of $\M_5$ transformed from a neighborhood of $\M_4$, such that all solutions starting off the manifold within the neighborhood either scatter to $0$ as $t\to\I$ or blow up in $t>0$ away from a bigger neighborhood. 
Thus we obtain Theorem \ref{main thm3}. 

\section{One-pass theorem with large radiation}
In this section, we derive one-pass theorems which allow arbitrarily large radiation. For $\fy\in\cH$, we define the ``radiative distance" $\dR$ to the ground states for any $\fy\in\cH$ by
\EQ{ \label{def dRa}
 \dR(\fy):=\inf_{B\in\sB_d,\ \psi\in\pm\Static(W)}\|\fy-\psi\|_{\cHR{B}}+\|\fy\|_{\Ra_B^\I}+\|\psi\|_{\cHR{B^\cmpl}}.  }
Obviously $\dR:\cH\to[0,\I)$ is Lipschitz continuous, and $\dR(\T^c\cS^\s\fy)=\dR(\fy)$. It is not invariant for the time inversion $\fy\mapsto\fy^\da$. 
Taking $B=\R^d$ yields 
\EQ{
 \dR(\fy) \le \dist_W(\fy)\sim d_W(\fy).}
The embeddings \eqref{St L6} and $\cH\subset(1\oplus |\na|)L^{2^*}$ imply
\EQ{
 \dist_{(1\oplus|\na|)L^{2^*}}(\fy,\pm\Static(W)) \lec \dR(\fy).}
By Lemma \ref{lem:est Ra}, we immediately obtain
\begin{lem} \label{lem:est dR}
Under the same assumption as in Lemma \ref{lem:est Ra} with $I=[0,\I)$, 
\EQ{
 \lim_{t\to\I}\dR(\vec u(t)) \lec \ra^{d/2-1}+\ra.}
In particular, if $\vec u\in\Sol([0,\I))$ scatters to the ground states \eqref{scat2W}, then 
\EQ{
 \dR(\vec u(t))  + t^{-1}(e^{-\ti\s(\vec u(t))}+|\ti c(\vec u(t))|) \to 0 \pq(t\to\I).}
\end{lem}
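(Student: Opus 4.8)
The plan is to deduce the first estimate directly from Lemma \ref{lem:est Ra}. Applying that lemma with $I=[0,\I)$ produces $t_0$ and the slowly expanding balls $B(t)=\{|x-c(0)|<e^{-\s(0)}+t/2\}$ with $\sup_{t\ge t_0}\big(\|\psi(t)\|_{\cHR{B(t)^\cmpl}}+\|\vec F(t)\|_{\Ra_{B(t)}^\I}\big)\lec\ra^{d/2-1}$, where $\psi(t):=\T^{c(t)}\cS^{\s(t)}\vec W$. I would then feed the admissible pair $(B(t),\psi(t))$ into the infimum defining $\dR$, getting $\dR(\vec u(t))\le\|\vec u(t)-\psi(t)\|_{\cHR{B(t)}}+\|\vec u(t)\|_{\Ra_{B(t)}^\I}+\|\psi(t)\|_{\cHR{B(t)^\cmpl}}$, and bound each term. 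The third term is $\lec\ra^{d/2-1}$ by Lemma \ref{lem:est Ra}. For the second, I split the exterior data $\vec u(t)=\psi(t)+\vec F(t)+R(t)$ on $B(t)^\cmpl$ and use that $\|\cdot\|_{\Ra_{B}^\I}$ is subadditive under such a splitting and is $\lec\|\cdot\|_{\cHR{B^\cmpl}}$ on any summand carried by a fixed $\cH$-vector (competitor $s\mapsto U(s)X_{B^\cmpl}(\cdot)$, using the energy bound and Strichartz); this gives $\|\vec u(t)\|_{\Ra_{B(t)}^\I}\lec\|\psi(t)\|_{\cHR{B(t)^\cmpl}}+\|\vec F(t)\|_{\Ra_{B(t)}^\I}+\|R(t)\|_\cH\lec\ra^{d/2-1}+\ra$.

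For the first term the plan is $\|\vec u(t)-\psi(t)\|_{\cHR{B(t)}}=\|\vec F(t)+R(t)\|_{\cHR{B(t)}}\le\|\vec F(t)\|_{\cHR{B(t)}}+\ra$, and to kill $\|\vec F(t)\|_{\cHR{B(t)}}$ I would invoke the asymptotic Huygens principle, Lemma \ref{lem:Huy}, with the bounded set $B_0:=\{|x-c(0)|<e^{-\s(0)}\}$: since $B(t)=(B_0)_{+t/2}$ and $\vec F(t)=\vec F(t/2+t/2)$, taking base time and local time both equal to $t/2$ yields $\|\vec F(t)\|_{\cHR{B(t)}}\le\sup_{s>0}\|\vec F(t/2+s)\|_{\cHR{(B_0)_{+s}}}\to0$ as $t\to\I$. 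Summing the three bounds and taking $\Limsup_{t\to\I}$ then gives $\Limsup_{t\to\I}\dR(\vec u(t))\lec\ra^{d/2-1}+\ra$, which is the first assertion.

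For the ``in particular'' statement, suppose $\vec u\in\Sol([0,\I))$ satisfies \eqref{scat2W}. Fixing a small $\e>0$, I would choose $T_0$ large so that $d_W(\vec u(t)-U(t)\fy)<\e$ for $t\ge T_0$ and $\|U(\cdot)\fy\|_{\St(T_0,\I)}<\e$ (the latter by the Strichartz tail, since $\|U(\cdot)\fy\|_{\St(\R)}\lec\|\fy\|_\cH$). For $t\ge T_0$ one may then write $\vec u(t)-U(t)\fy=\T^{c(t)}\cS^{\s(t)}(\vec W+\ti\la(t)\ro+\ga(t))$ with $(\s(t),c(t))$ the modulation of $\vec u(t)-U(t)\fy$ (up to the harmless sign, replacing $\vec u,\fy$ by $-\vec u,-\fy$ if needed), so $\vec u(t)=\T^{c(t)}\cS^{\s(t)}\vec W+U(t)\fy+R(t)$ with $\|R\|_{L^\I(T_0,\I;\cH)}\lec\e$; thus the hypotheses of Lemma \ref{lem:est Ra} hold on $I=[T_0,\I)$ with $\ra\lec\e$. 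The first part (applied with this time origin, which does not affect $\Limsup_{t\to\I}$) gives $\Limsup_{t\to\I}\dR(\vec u(t))\lec\ra^{d/2-1}+\ra\lec\e^{d/2-1}+\e$ for every $\e$, hence $\dR(\vec u(t))\to0$. For the modulation term I would use \eqref{est sc} with $t_0=T_0$, $t_1=t$, namely $e^{-\s(t)}+|c(t)-c(T_0)|\lec e^{-\s(T_0)}+\ra\,t$; dividing by $t$ and letting $t\to\I$ with $T_0$ fixed bounds $\Limsup_{t\to\I}t^{-1}(e^{-\s(t)}+|c(t)|)$ by $C\ra\lec C\e$, and then $\e\to0$ concludes. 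Here $\ti\s(\vec u(t)),\ti c(\vec u(t))$ in the statement are to be read as these modulation parameters $(\s(t),c(t))$ of the ground-state profile in the decomposition, since $\vec u(t)$ itself need not lie in $\cN_{\de_\Phi}$ when $\|\fy\|_\cH$ is large.

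The main obstacle I anticipate is confining the free radiation $\vec F(t)$: its $\Ra_{B(t)}^\I$-part and the ground-state tail $\|\psi(t)\|_{\cHR{B(t)^\cmpl}}$ are handed to us by Lemma \ref{lem:est Ra}, but its interior energy $\|\vec F(t)\|_{\cHR{B(t)}}$ over the complement of the light cone has to be extracted from the asymptotic Huygens principle at the matched scale (base time $\approx$ radius $\approx t/2$); getting this geometry right is the crux. A secondary, milder point is that in the scattering-to-soliton case the center $c(t)$ may a priori drift linearly in $t$ with slope $O(\ra)$, so $t^{-1}|c(t)|\to0$ is obtained only after dividing by $t$ and exhausting $T_0\to\I$ — a standard $\e$-argument, but one that must be anchored at a base time rather than at $t=0$.
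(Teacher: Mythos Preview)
Your proof is correct and follows the same route as the paper's one-line argument, which simply records the bound $\dR(\vec u(t)) \lec \|\vec F(t)\|_{\Ra_{B(t)}^\I}+\|\psi(t)\|_{\cHR{B(t)^\cmpl}}+\|R(t)\|_{\cH}$ and invokes \eqref{est Ra}. You have unpacked this carefully; in particular, your separate treatment of $\|\vec F(t)\|_{\cHR{B(t)}}$ via Lemma~\ref{lem:Huy} (at matched scale $t/2$) makes explicit a step the paper leaves implicit in the proof of Lemma~\ref{lem:est Ra}, and your reading of $(\ti\s,\ti c)$ in the ``in particular'' clause as the modulation of $\vec u(t)-U(t)\fy$ is the intended one.
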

\begin{proof}
Combine \eqref{est Ra} with
$\dR(\vec u(t)) \lec \|\vec F(t)\|_{\Ra_{B(t)}^\I}+\|\psi(t)\|_{\cHR{B(t)^\cmpl}}+\|R(t)\|_{\cH}$. 
\end{proof}

Smallness of the radiative distance enables the detaching.
\begin{lem} \label{lem:ud2dR}
Let $\vec u\in\Sol([0,T))$ satisfy $\dR(\vec u(0))=\ra<\e_S$. Then there exist $B\in\sB_d$, a free solution $v$, $\vec u\xt\in\Sol([0,\I))$ and $\vec u\dt\in\Sol([0,T))$, satisfying $\vec v(0)=\vec u(0)$ on $B^\cmpl$, $\vec u\xt=\vec u$ on $(B_{+t})^\cmpl$, $\vec u\dt=\vec u$ on $B_{+t}$, and 
\EQ{ \label{v ux ud dR}
 \pt \|\vec v\|_{L^\I_t(0,\I;\cHR{B_{+t}})\cap\St(0,\I)}+\|\vec u-\vec u\dt-\vec v\|_{L^\I\cH\cap\St(0,T)}\lec \ra,
 \pr \|\vec u\xt-\vec v\|_{L^\I\cH\cap\St(0,\I)}\lec\ra^{2^*-1}, 
 \pq \|\vec u\dt(t)\|_{L^\I_t(0,T;\cHR{B_{+t}^\cmpl})} \lec \ra.}
Moreover, for $0\le t<T$ we have 
\EQ{ \label{dR ud}
  \dR(\vec u(t))\lec \ra+d_W(\vec u\dt(t)).} 
\end{lem}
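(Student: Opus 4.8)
The plan is to read off the whole statement from the Detaching Lemma~\ref{lem:detach}(I) together with the definition~\eqref{def dRa} of $\dR$ and routine bookkeeping for the seminorms $\cHR{\cdot}$ and $\Ra^\I_\cdot$. First I would fix $B\in\sB_d$ and $\psi\in\pm\Static(W)$ nearly realizing the infimum in~\eqref{def dRa}, so that $\|\vec u(0)-\psi\|_{\cHR{B}}+\|\vec u(0)\|_{\Ra_B^\I}+\|\psi\|_{\cHR{B^\cmpl}}\lec\ra$; in particular $\|\vec u(0)\|_{\Ra_B^\I}<\e_S$, so Lemma~\ref{lem:detach}(I) applies with $\ti T=\I$ and this $B$. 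It produces a free solution $v$, solutions $\vec u\xt\in\Sol([0,\I))$ and $\vec u\dt\in\Sol([0,T))$, and $w\in C([0,\I);\cH)$ with $\vec u=\vec u\dt+w$ on $[0,T)$, $\vec v(0)=\vec u(0)$ on $B^\cmpl$, $\vec u\xt=\vec u$ on $(B_{+t})^\cmpl$, $\vec u\dt=\vec u$ on $B_{+t}$, and all the bounds in~\eqref{v ux ud} with the constant $\ra$ there replaced by $\|\vec u(0)\|_{\Ra_B^\I}\lec\ra$. Since $2^*-1>1$ and $\ra<1$, this already gives the first two lines of~\eqref{v ux ud dR} (using $\vec u-\vec u\dt-\vec v=w-\vec v$ and $\St(0,\ti T)=\St(0,\I)$), and it shows $\vec u\dt$ cannot be extended beyond $T$ because $\vec u=\vec u\dt+w$ with $w$ globally defined and small in $\St$.

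For the third bound in~\eqref{v ux ud dR} I would use finite propagation speed: on $(B_{+t})^\cmpl$ one has $\vec u(t)=\vec u\xt(t)$ and $\vec w(t)=\vec u(t)-\vec u\dt(t)$, so $\vec u\dt(t)=\vec u\xt(t)-\vec w(t)$ there, whence
\EQN{
 \|\vec u\dt(t)\|_{\cHR{B_{+t}^\cmpl}}\le\|\vec u\xt(t)-\vec w(t)\|_\cH\le\|\vec u\xt-\vec v\|_{L^\I(0,\I;\cH)}+\|w-\vec v\|_{L^\I(0,T;\cH)}\lec\ra^{2^*-1}+\ra\lec\ra,
}
uniformly for $t\in[0,T)$.

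For~\eqref{dR ud} I would fix $t\in[0,T)$ and use in~\eqref{def dRa} the competitor $B':=B_{+t}$ together with a $\psi'\in\pm\Static(W)$ nearly realizing $\dist_W(\vec u\dt(t))$, so that $\|\vec u\dt(t)-\psi'\|_\cH\lec\dist_W(\vec u\dt(t))\sim d_W(\vec u\dt(t))$. On $B_{+t}$ we have $\vec u(t)=\vec u\dt(t)$ (as $w=0$ there), hence $\|\vec u(t)-\psi'\|_{\cHR{B_{+t}}}\le\|\vec u\dt(t)-\psi'\|_\cH\lec d_W(\vec u\dt(t))$; combining with the third bound of~\eqref{v ux ud dR} controls the exterior piece of the ground state, $\|\psi'\|_{\cHR{B_{+t}^\cmpl}}\le\|\psi'-\vec u\dt(t)\|_\cH+\|\vec u\dt(t)\|_{\cHR{B_{+t}^\cmpl}}\lec d_W(\vec u\dt(t))+\ra$. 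For the radiative term, the law of order $\|U(t)\fy\|_{\Ra_{B_{+t}}^{\I}}\le\|\fy\|_{\Ra_B^\I}$ applied to $\fy=\vec v(0)$, together with $\|\vec v(0)\|_{\Ra_B^\I}=\|\vec u(0)\|_{\Ra_B^\I}\lec\ra$, gives $\|\vec v(t)\|_{\Ra_{B_{+t}}^\I}\lec\ra$; and since $\vec u(t)=\vec u\xt(t)=\vec v(t)+\chi$ on $(B_{+t})^\cmpl$ with $\chi:=X_{B_{+t}^\cmpl}(\vec u\xt(t)-\vec v(t))$, $\|\chi\|_\cH\lec\ra^{2^*-1}$, testing the seminorm $\|\vec u(t)\|_{\Ra_{B_{+t}}^\I}$ on the competitor extension $\vec v(t)+\chi$ and using the Strichartz estimate together with energy conservation for $U(s)\chi$ yields $\|\vec u(t)\|_{\Ra_{B_{+t}}^\I}\lec\ra$. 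Adding the three contributions proves~\eqref{dR ud}.

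All steps are elementary; the two places that need care are the very first step — choosing $(B,\psi)$ close enough to the infimum that $\|\vec u(0)\|_{\Ra_B^\I}<\e_S$ and all the detaching estimates come out in terms of $\ra=\dR(\vec u(0))$ — and, in the proof of~\eqref{dR ud}, the passage from the outgoing free solution $v$ to the nonlinear $\vec u$ inside the seminorm $\Ra_{B_{+t}}^\I$, which relies on $\vec u$ and $\vec u\xt$ coinciding outside the light cone $B_{+t}$.
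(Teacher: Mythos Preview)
Your proof is correct and follows essentially the same route as the paper: choose $(B,\psi)$ near the infimum in \eqref{def dRa}, invoke Lemma~\ref{lem:detach}(I) with $\ti T=\I$, derive the exterior bound on $\vec u\dt$ from $\vec u\dt=\vec u\xt-w$ on $(B_{+t})^\cmpl$, and estimate the three pieces of $\dR(\vec u(t))$ at the competitor $B_{+t}$. The only cosmetic difference is in the radiative term of \eqref{dR ud}: the paper uses $\vec u\xt(t)$ itself as the competitor in $\Ra_{B_{+t}}^\I$ (since $\vec u\xt(t)=\vec u(t)$ on $(B_{+t})^\cmpl$) and bounds $\|U(\cdot-t)\vec u\xt(t)\|$ directly by $\|\vec v\|+\|\vec u\xt(t)-\vec v(t)\|_\cH$, which is a shade more direct than your detour through the law of order and the extension $\chi$, but the content is the same.
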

\begin{proof}
By definition of $\dR$, there exist $B\in\sB_d$ and $\psi\in\pm\Static(W)$ such that 
\EQ{ \label{def B psi}
 \ra\gec \|\vec u(0)-\psi\|_{\cHR{B}}+\|\vec u(0)\|_{\Ra_B^\I}+\|\psi\|_{\cHR{B^\cmpl}}<\e_S.}
Applying Lemma \ref{lem:detach} with $\ti T=\I$ yields $v$, $u\xt$, $u\dt$ and \eqref{v ux ud dR}, where the last inequality follows from the others 
\EQ{
 \|\vec u\dt(t)\|_{\cHR{B_{+t}^\cmpl}}
  \le \|\vec u\xt(t)-\vec v(t)\|_{\cHR{B_{+t}^\cmpl}}+\|\vec u\dt(t)-\vec u(t)+\vec v(t)\|_{\cH} \lec \ra.}
Let $\psi(t):[0,T)\to\pm\Static(W)$ such that $d_W(\vec u\dt(t))\sim\|\vec u\dt(t)-\psi(t)\|_\cH$. Then 
\EQ{ 
 \pt\|\vec u(s)-\psi\|_{\cHR{B_{+s}}}
 \le \|\vec u\dt(s)-\psi\|_{\cH} \sim d_W(\vec u\dt(s)),
 \pr \|\psi\|_{\cHR{(B_{+s})^\cmpl}}
 \le \|\vec u\dt(s)-\psi\|_{\cH}+\|u\dt(s)\|_{\cHR{(B_{+s})^\cmpl}}\lec d_W(\vec u\dt(s))+\ra,
 \pr\|\vec u(s)\|_{\Ra_{B_{+s}}^\I} 
   \le \|U(t-s)\vec u\xt(s)\|_{L^\I_t(s,\I;\cHR{B_{+t}})\cap\St(s,\I)}
 \prq\lec \|\vec v\|_{L^\I_t(s,\I;\cHR{B_{+t}})\cap\St(s,\I)}+\|\vec v(s)-\vec u\xt(s)\|_\cH \lec \ra.}
Gathering these three estimates, we obtain \eqref{dR ud}. 
\end{proof}

We are now ready to prove the first one-pass theorem in the radiative distance. 
\begin{thm} \label{thm:OPR1}
There exist constants $C_*>1>\ra_*>0$ such that if $\vec u\in\Sol([0,T])$ satisfies 
\EQ{
 \max(\dR(\vec u(0)),\dR(\vec u(T)))=:\ra \le \ra_* \ll \de_*,}  
then there is $B\in\sB_d$ such that the conclusion of Lemma \ref{lem:detach}(I) holds and 
\EQ{
 \|\dR(\vec u)\|_{L^\I_t(0,T)}+\|d_W(\vec u\dt)\|_{L^\I_t(0,T)}\le C_*\ra.} 

On the other hand, if $\vec u\in\Sol([0,T))$ satisfies 
\EQ{
 \dR(\vec u(0))=:\ra\le\ra_*, \pq \dR(\vec u(t_0))>C_*\ra}
at some $t_0\in(0,T)$, then there is $t_1\in[t_0,T)$ such that 
\EQ{
 \dR(\vec u(t_1))=\inf_{t_1\le t<T}\dR(\vec u(t))\ge \max(\ra,\ra_*/C_*).} 
Moreover, if $K_{B_{+t_1}}(u(t_1))<0$ then $T<\I$ and 
\EQ{
 \liminf_{t\nearrow T}\dist_{L^{2^*}}(u(t),\Static(W)_1)\gec \de_*,} 
otherwise $T=\I$ and $u$ scatters to $0$ as $t\to\I$. 
\end{thm}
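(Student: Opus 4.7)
My strategy is to reduce both assertions to the original one-pass theorem of \cite{CNW-nonrad} applied to the detached interior solution $u\dt$ produced by Lemma \ref{lem:ud2dR}, using the comparison \eqref{dR ud} to translate between $\dR(\vec u)$ and $d_W(\vec u\dt)$.

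For the first assertion, I would first apply Lemma \ref{lem:ud2dR} at $t=0$ to obtain $B$, a dispersive radiation $v$ and an interior solution $u\dt\in\Sol([0,T])$ with $d_W(\vec u\dt(0))\lec\ra$; the identity \eqref{def dW} together with energy conservation then yields $|E(\vec u\dt)-E(W)|\lec\ra^2$. The decisive step is to propagate the endpoint hypothesis $\dR(\vec u(T))\le\ra$ into a matching bound $d_W(\vec u\dt(T))\lec\ra$. By \eqref{v ux ud dR}, the remainder $\vec u(T)-\vec u\dt(T)$ coincides with the free wave $\vec v(T)$ up to $O(\ra)$, while $\vec v(T)$ is small on $B_{+T}$ by Lemma \ref{lem:Huy} and $\vec u\dt(T)$ is concentrated in $B_{+T}$; any triple $(B'',\psi'',v'')$ realizing $\dR(\vec u(T))\le\ra$ is therefore forced to approximate $\vec u\dt(T)$ on $B_{+T}$ by a ground state profile essentially supported in $B_{+T}$, and the reverse-Sobolev estimate \eqref{rev Sob} together with \eqref{def dW} then closes the bound on $d_W(\vec u\dt(T))$. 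Once both endpoints are controlled and $E(\vec u\dt)$ sits in the slightly super-threshold regime, the one-pass theorem of \cite{CNW-nonrad} applied to $u\dt$ gives $\|d_W(\vec u\dt)\|_{L^\I(0,T)}\lec\ra$, and \eqref{dR ud} closes the $\dR$ estimate after adjusting $C_*$.

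For the second assertion, set $m:=\inf_{t_0\le t<T}\dR(\vec u(t))$. If $m<\ra$, then some $t'\in(t_0,T)$ would satisfy $\dR(\vec u(t'))<\ra$, and part (I) applied on $[0,t']$ would give $\|\dR(\vec u)\|_{L^\I(0,t')}\le C_*\ra$, contradicting $\dR(\vec u(t_0))>C_*\ra$; hence $m\ge\ra$. The improved bound $m\ge\ra_*/C_*$ is obtained by a similar contradiction coupled with the ejection lemma of \cite{CNW-nonrad} applied to the detached solution: once $d_W(\vec u\dt)$ has been driven to the $\de_*$-scale near $t_0$, it cannot return below $\ra_*/C_*$ without being re-ejected, which would violate part (I) on a subinterval. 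A standard compactness or minimizing-sequence argument then produces $t_1\in[t_0,T)$ with $\dR(\vec u(t_1))=\inf_{t_1\le t<T}\dR(\vec u(t))=m\ge\max(\ra,\ra_*/C_*)$.

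Having located $t_1$, I would invoke Lemma \ref{lem:ud2dR} at $t_1$ to detach with a fresh set $\ti B$ (which up to $O(\ra)$ may be identified with $B_{+t_1}$), obtaining an interior solution $\ti u\dt$ on $[t_1,T)$ with $d_W(\ti u\dt(t_1))\sim\dR(\vec u(t_1))\ge\ra_*/C_*$. Since $\dR$ stays above $\ra_*/C_*$ throughout $[t_1,T)$, the variational lemma of \cite{CNW-nonrad} forces $K(\ti u\dt(t))$ to retain a definite sign there, determined at $t_1$ by the comparison $K_{B_{+t_1}}(u(t_1))=K(\ti u\dt(t_1))+O(\ra)$. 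The one-pass dichotomy of \cite{CNW-nonrad} for $\ti u\dt$ then yields: in the case $K<0$, finite-time blow-up of $\ti u\dt$ away from $\Static(W)_1$ in $L^{2^*}$ with $\dist_{L^{2^*}}(\ti u\dt(t),\Static(W)_1)\gec\de_*$, which transfers to $u$ by the smallness \eqref{ux L6} of the dispersive correction in $L^{2^*}$; in the case $K>0$, scattering of $\ti u\dt$ to $0$, which combined with the scattering radiation $\vec v$ gives scattering of $u$. The main technical obstacle is the \emph{reverse} direction of \eqref{dR ud} used in part (I), namely passing from smallness of $\dR(\vec u(T))$ to smallness of $d_W(\vec u\dt(T))$ without any a priori match between the two detaching sets chosen at $t=0$ and $t=T$; the comparison rests on finite propagation, the asymptotic Huygens principle of Lemma \ref{lem:Huy}, and the ground-state reverse-Sobolev estimate \eqref{rev Sob}.
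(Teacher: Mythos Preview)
Your plan for the first assertion is essentially the paper's argument: detach at $t=0$ via Lemma~\ref{lem:ud2dR}, check $E(\vec u\dt)=E(W)+O(\ra^2)$, upgrade $\dR(\vec u(T))\le\ra$ to $d_W(\vec u\dt(T))\lec\ra$ using a second ground-state profile $\ti\psi$ on a second set $\ti B$ together with the reverse Sobolev \eqref{rev Sob}, and then invoke the one-pass theorem of \cite{CNW-nonrad} for $u\dt$. Two small corrections: the bound $d_W(\vec u\dt(0))\lec\ra$ is not stated in Lemma~\ref{lem:ud2dR} and must be derived from \eqref{v ux ud dR} together with the defining triple \eqref{def B psi}; and the smallness of $\vec v(T)$ on $B_{+T}$ comes directly from the $\Ra_B^\I$ bound in \eqref{v ux ud dR}, not from Lemma~\ref{lem:Huy}, which concerns only the limit $T\to\I$.

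For the second assertion your approach diverges from the paper's and contains a genuine gap. You propose to re-detach at $t_1$ with a fresh set $\ti B$ and a fresh interior solution $\ti u\dt$, and you assert $d_W(\ti u\dt(t_1))\sim\dR(\vec u(t_1))$. But detaching via Lemma~\ref{lem:ud2dR} with a near-optimal triple for $\dR(\vec u(t_1))$ only yields the \emph{upper} bound $d_W(\ti u\dt(t_1))\lec\dR(\vec u(t_1))$; the lower bound you need is precisely the ``reverse direction of \eqref{dR ud}'' you flag as the main obstacle, and it is not available in general. Consequently $\ti u\dt$ starts \emph{close} to $\Static(W)$ (at scale $\lec\ra_*/C_*\ll\de_*$), so the variational sign lemma of \cite{CNW-nonrad} does not apply at $t_1$, and you cannot read off the blow-up/scattering dichotomy from $\ti u\dt(t_1)$. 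Your identification of $\ti B$ with $B_{+t_1}$ ``up to $O(\ra)$'' is likewise unjustified.

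The paper avoids this entirely by keeping the \emph{same} detached solution $u\dt$ from $t=0$. The forward inequality \eqref{dR ud} gives $d_W(\vec u\dt(t_0))\gec\dR(\vec u(t_0))-C\ra>\de$ once $C_*$ is large, so $u\dt$ (which has $E(u\dt)=E(W)+O(\ra^2)$) is already in the post-ejection regime of \cite{CNW-nonrad}; its fate (blow-up with $K<0$ or scattering with $K>0$) is then determined there, and transferred to $u$ via \eqref{v ux ud dR} and \eqref{ux L6}. The comparison $K(u\dt(t_1))=K_{B_{+t_1}}(u(t_1))+O(\ra)$ is immediate because $u\dt=u$ on $B_{+t_1}$ and $u\dt$ is $O(\ra)$ outside. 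Your contradiction argument for $\inf_{t\ge t_0}\dR\ge\ra$ is correct and useful, but the rest of the second part should be rerouted through the original $u\dt$ rather than a fresh detachment.
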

\begin{proof}
We will reduce it to \cite{CNW-nonrad} by the detaching Lemma \ref{lem:detach}. 
Choose $\ra_*<\e_S$ and let $B$, $v$, $u\xt$ and $u\dt$ be as in the above lemma, with $\psi\in\pm\Static(W)$ satisfying \eqref{def B psi}. Combining it with \eqref{v ux ud dR} yields
\EQ{
 \|\vec u\dt(0)-\psi\|_{\cH}
 \pt\le \|\vec u(0)-\psi\|_{\cH(B)}+\|\psi\|_{\cH(B^\cmpl)}+\|\vec u\dt(0)\|_{\cH(B^\cmpl)} \lec \ra,}
which implies $E(\vec u\dt)=E(W)+O(\ra^2)$ since $E'=0$ on any static solution. 

The same argument at $t=T$ yields $\ti B\in\sB_d$ and $\ti\psi\in\pm\Static(W)$ such that 
\EQ{
 \pt \|\vec u(T)-\ti\psi\|_{\cHR{\ti B}}+\|\vec u(T)\|_{\Ra_{\ti B}^\I}+\|\ti \psi\|_{\cHR{{\ti B}^\cmpl}}+\|u(T)\|_{L^{2^*}({\ti B}^\cmpl)} \lec \ra.}
Let $\hat B:=B_{+T}$. Using the reverse Sobolev \eqref{rev Sob}, we obtain 
\EQ{
 \|\na\ti\psi_1\|_{L^2(\ti B\setminus \hat B)}
 \lec \|\ti\psi_1\|_{L^{2^*}(\ti B\setminus \hat B)}
 \lec \|\ti\psi-\vec u(T)\|_{\cHR{\ti B}}+\|u(T)\|_{L^{2^*}({\hat B}^\cmpl)}
 \lec \ra,}
and, combining it with \eqref{v ux ud dR} and \eqref{def B psi}, 
\EQ{ \label{ud in}
 \pn\|\vec u\dt(T)-\ti\psi\|_{\ti\cH(\ti B)} 
 \pt\le \|\vec u(T)-\ti\psi\|_{\ti\cH(\ti B\cap\hat B)} + \|\vec u\dt(T)\|_{\ti\cH(\ti B\setminus\hat B)}+\|\ti\psi\|_{\ti\cH(\ti B\setminus \hat B)} 
 \pr\lec \|\vec u(T)-\ti\psi\|_{\cHR{\ti B}}+\|\vec u\dt(T)\|_{\cHR{{\hat B}^\cmpl}}+ \ra \lec \ra.}
Expanding the energy for $\fy:=\vec u\dt(T)-\ti\psi$ yields
\EQ{
 E(\vec u\dt(T))=E(W) \pt+ \frac{1}{2}[\LR{L_{\ti\psi} \fy_1|\fy_1}+\|\fy_2\|_2^2] + o(\|\fy_1\|_{2^*}^2),}
where $L_{\ti\psi}:=-\De-f''(\ti \psi_1)$. Since $\|u\dt(T)\|_{L^{2^*}({\ti B}^\cmpl\cup{\hat B}^\cmpl)}
 + \|\ti\psi_1\|_{L^{2^*}({\ti B}^\cmpl\cup{\hat B}^\cmpl)} \lec \ra$ and \eqref{ud in}, we have $\|\fy_1\|_{L^{2^*}(\R^d)}\lec\ra$ and so 
\EQ{
 E(W)+O(\ra^2) = E(\vec u\dt(T))=E(W) \pt+ \frac{1}{2}\|\fy\|_\cH^2 + o(\ra^2).}
Thus we obtain 
\EQ{
 d_W(\vec u\dt(T))\lec \|\vec u\dt(T)-\ti\psi\|_\cH  \lec \ra.}
Choose $\ra_*\ll\e_*$ of \cite[Theorem 5.1]{CNW-nonrad}, so that we can apply that one-pass theorem to $u\dt$, both from $t=0$ forward in time, and from $t=T$ backward in time. Specifically, there are $\e,\de>0$ such that 
\EQ{
 \pt \ra\sim\e\sim\de,\pq \e\le\e_*,\pq \sqrt{2}\e<\de\le\de_*,
 \pr E(\vec u\dt)\le E(W)+\e^2, \pq \max(d_W(\vec u\dt(0)),d_W(\vec u\dt(T)))<\de}
so that we can deduce from the theorem that 
$\max_{0\le t\le T}d_W(\vec u\dt(t)) < \de$, and then, by the above lemma, 
$\dR(\vec u)\lec\de\sim\ra$ for $0<t<T$. 

Next, if $\dR(\vec u(t_0))\ge C_*\ra$ at some $t_0$, with $C_*>1$ large enough, then the above lemma implies that $d_W(\vec u\dt(t_0))>\de$ and so, by the classification of the dynamics after ejection in \cite{CNW-nonrad}, we conclude that $d_W(\vec u\dt(t))\gec\de_*$ after some $t_1\in(t_0,T)$, and then $u\dt$ either blows up in finite time, or scatters to $0$, so does $u$ by \eqref{v ux ud dR}. The blow up occurs away from the ground states in the sense of \eqref{far in L6}. Moreover, this dichotomy is determined by $\sign(K(u\dt(t_1)))$. Choosing $\ra_*$ smaller if necessary, and using \eqref{v ux ud dR}, we have
\EQ{
 \pm\de_* \sim K(u\dt(t_1))=K_{B_{+t_1}}(u(t_1))+O(\ra),}
which implies $\sign K_{B_{+t_1}}(u(t_1))=\sign K(u\dt(t_1))$. 
\end{proof}

In particular, we can characterize the manifold with large radiation, constructed in the previous section, by using the radiative distance. 
\begin{cor}
Let $\vec u\in\Sol([0,T))$. If $\vec u(0)\in\M_3$ then $\sup_{0<t<T}\dR(\vec u(t))\lec\de_m$. Conversely, if $\sup_{0<t<T}\dR(\vec u(t))<\min(\ra_m,\ra_*/C_*)$ then $\vec u(0)\in\M_3$. 
\end{cor}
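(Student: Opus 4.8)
The plan is to read off both implications from the detaching construction (Lemmas~\ref{lem:detach} and \ref{lem:ud2dR}) combined with the trichotomy of Theorem~\ref{thm:cmd0} applied to the detached profile $u\dt$. The ingredients are the comparison bounds \eqref{v ux ud}, \eqref{v ux ud dR}, \eqref{dR ud}; the embedding \eqref{St L6}, which upgrades them to $\|u-u\dt\|_{L^\I_t((1\oplus|\na|)L^{2^*})}\lec\ra$; and the reverse Sobolev inequality \eqref{rev Sob}, which yields $\dist_{(1\oplus|\na|)L^{2^*}}(\fy,\pm\Static(W))\lec\dR(\fy)$ and hence $\dR(\vec u(t))\gec\dist_{L^{2^*}}(u(t),\pm\Static(W)_1)$.

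Suppose first $\vec u(0)\in\M_3$. By \eqref{def M3} there are $B$, $v$, $u\xt$, $u\dt$ with $\vec u\dt(0)\in\M_0$, and with $\ra:=\|\vec u(0)\|_{\Ra_B^\I}$ satisfying $\ra+\|\sT_{\vec u\dt(0)}\vec W\|_{\cHR{B^\cmpl}}<\ra_m$. Since $\vec u\dt(0)\in\M_0$ lies in the domain of $m_+$, case~\eqref{trap} of Theorem~\ref{thm:cmd0} gives $\sup_{0\le t<T}d_W(\vec u\dt(t))\lec\de_m$. For $t\in(0,T)$, choose $\psi_t\in\pm\Static(W)$ with $\|\vec u\dt(t)-\psi_t\|_\cH\sim d_W(\vec u\dt(t))$ and bound the infimum defining $\dR(\vec u(t))$ in \eqref{def dRa} at the admissible pair $(B_{+t},\psi_t)$: because $\vec u=\vec u\dt$ on $B_{+t}$, $\vec u=\vec u\xt$ on $(B_{+t})^\cmpl$, and $u\xt$ is globally close to the free wave $v$ in $L^\I\cH\cap\St$, the three terms are controlled respectively by $d_W(\vec u\dt(t))$, by $\ra$ (exactly as in the proof of Lemma~\ref{lem:ud2dR}), and by $d_W(\vec u\dt(t))+\|\vec u\dt(t)\|_{\cHR{(B_{+t})^\cmpl}}\lec d_W(\vec u\dt(t))+\ra$, using \eqref{v ux ud}. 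Summing, $\dR(\vec u(t))\lec\de_m$ uniformly in $t$.

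Conversely, assume $\sup_{0<t<T}\dR(\vec u(t))<\min(\ra_m,\ra_*/C_*)$, so in particular $\ra:=\dR(\vec u(0))<\ra_*<\e_S$; apply Lemma~\ref{lem:ud2dR} at $t=0$ to get $B$, $\psi\in\pm\Static(W)$ as in \eqref{def B psi}, $v$, $u\xt\in\Sol([0,\I))$ and $u\dt\in\Sol([0,T))$ obeying \eqref{v ux ud dR}, \eqref{dR ud}. Then $d_W(\vec u\dt(0))\lec\|\vec u\dt(0)-\psi\|_\cH\lec\ra$ and $E(\vec u\dt)=E(W)+O(\ra^2)$, so $\vec u\dt(0)$ lies in the domain of $m_+$ and Theorem~\ref{thm:cmd0} applies to $u\dt$. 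If $u\dt$ is in case~\eqref{scat}, then $u=u\dt+w$ with $w$ Strichartz-small, so $u$ is also global and scatters to $0$, whence $\|u(t)\|_{L^{2^*}}\to0$ and $\dist_{L^{2^*}}(u(t),\pm\Static(W)_1)\to\|W\|_{L^{2^*}}$, forcing $\dR(\vec u(t))\gec\|W\|_{L^{2^*}}\gg\ra_m$ at large $t$, a contradiction. If $u\dt$ is in case~\eqref{bup}, then since $u\xt$ is globally Strichartz-bounded and $u=u\xt$ off the forward light cone $B_{+t}$, a first blow-up point of $u$ has distance $<T$ from $B$, hence lies in $B_{+t}$ (where $u=u\dt$) for all $t$ near $T$, so $u\dt$ blows up at the same finite time $T$; near $T$ one has $\dist_{L^{2^*}}(u\dt(t),\pm\Static(W)_1)\gg\de_m$ (this is built into case~\eqref{bup}, cf.\ \eqref{far in L6}), hence with $\|u-u\dt\|_{L^\I_tL^{2^*}}\lec\ra$ also $\dist_{L^{2^*}}(u(t),\pm\Static(W)_1)\gg\de_m$ and $\dR(\vec u(t))\gg\de_m\ge\ra_m$, again contradicting the hypothesis. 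Therefore $u\dt$ is in case~\eqref{trap}, i.e.\ $\vec u\dt(0)\in\M_0$; and from \eqref{def B psi} together with $\|\sT_{\vec u\dt(0)}\vec W-\psi\|_\cH\lec d_W(\vec u\dt(0))\lec\ra$ we obtain $\ra+\|\sT_{\vec u\dt(0)}\vec W\|_{\cHR{B^\cmpl}}\lec\ra<\ra_m$ after shrinking $\ra_*$ once more, so \eqref{def M3} holds and $\vec u(0)\in\M_3$. The only delicate point in this argument is the finite-speed bookkeeping in case~\eqref{bup}---making sure the blow-up of $u\dt$ is genuinely felt by $u$ at times $t<T$---while the rest is a routine assembly of the estimates already at hand, up to a harmless final shrinking of $\ra_m,\ra_*$ to pass from $\lec$ to the strict inequalities of \eqref{def M3}.
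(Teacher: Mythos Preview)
Your proof is correct. The forward direction matches the paper's argument essentially verbatim (detach, use $\vec u\dt(0)\in\M_0$ and case~\eqref{trap} to bound $d_W(\vec u\dt)$, then invoke \eqref{dR ud}).

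For the converse you take a genuinely different route. The paper simply invokes Theorem~\ref{thm:OPR1}, proved immediately above the corollary: since $\dR(\vec u(t))$ stays below $\ra_*$ for all $t<T$, the one-pass theorem applied on each $[0,T']\subset[0,T)$ gives $\sup_t d_W(\vec u\dt(t))\le C_*\ra<\ra_m\ll\de_m$, hence $\vec u\dt(0)\in\M_0$. You instead bypass Theorem~\ref{thm:OPR1}, apply the trichotomy of Theorem~\ref{thm:cmd0} directly to $u\dt$, and eliminate cases~\eqref{bup} and~\eqref{scat} by contradiction with the smallness of $\dR(\vec u)$. This is valid and more self-contained, but it re-derives part of what Theorem~\ref{thm:OPR1} already packages; the paper's route is shorter precisely because the corollary is placed after that theorem.

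One remark on your blow-up case: the geometric ``first blow-up point lies in $B_{+t}$'' argument is more delicate than necessary. A cleaner way to see $T'=T$ (the maximal times of $u\dt$ and $u$) is purely via Strichartz: since $u=u\dt+w$ on $[0,\min(T,T'))$ with $\|w\|_{\St(0,\I)}\lec\ra<\I$, the Strichartz norms of $u$ and $u\dt$ blow up at the same time. This sidesteps the spatial bookkeeping entirely and is what justifies your conclusion. Your final adjustment of constants (``harmless final shrinking'') is indeed harmless and matches the paper's implicit treatment.
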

\begin{proof}
Let $\vec u(0)\in\M_3$, then we can apply the detaching Lemma \ref{lem:detach} with \eqref{def M3}, so
\EQ{
 \dR(\vec u(0))\le \|\vec u\dt(0)-\psi\|_{\cHR{B}}+\|\vec u(0)\|_{\Ra_B^\I}+\|\psi\|_{\cHR{B^\cmpl}}
 \lec \de_m+\ra_m \lec \de_m,}
where $\psi:=\sT_{\vec u\dt(0)}\vec W$. Since $\vec u\dt(0)\in\M_0$, using \eqref{dR ud} we obtain $\dR(\vec u(t))\lec d_W(\vec u\dt(t))+\de_m\lec\de_m$. 

If $\sup_{0<t<T}\dR(\vec u(t))<\min(\ra_*,\ra_m/C_*)$, then the above theorem implies that 
\EQ{
 \dR(\vec u(0))+\sup_{0<t<T}d_W(\vec u\dt(t))<\ra_m\ll\de_m.} 
Hence $\vec u\dt(0)\in\M_0$ and the definition of $\dR$ implies that $\vec u(0)\in\M_3$. 
\end{proof}
We can choose those distance parameters such as $\ra_m=\de_m/C$ and $\ra_*=\ra_m/C$ with some large absolute constant $C>1$, provided that $\de_m$ is chosen much smaller than the energy parameter $\e_*>0$ of the one-pass theorem in \cite{CNW-nonrad}. 

The above one-pass theorem does not preclude oscillation between $\ra<\dR<C_*\ra$. In the case of $d_W$ in \cite{CNW-nonrad}, it was possible to exclude such oscillations completely thanks to the convexity in time of $d_W^2$ near $\Static(W)$, which is not inherited by $\dR$. 
However, we can make an exact version of the above one-pass theorem by the flow. 
\begin{thm}[One-pass theorem with large radiation] \label{thm:OPR2}
There exist constants $C_*>1>\ra_*>0$, and an open set $X(\ra)\subset\cH$ for each $\ra\in(0,\ra_*]$ satisfying: 
\begin{enumerate}
\item $X(\ra)$ is increasing, i.e. $\ra_1<\ra_2\implies X(\ra_1)\subset X(\ra_2)$. 
\item Its boundary is in $\ra \le \dR \le C_*\ra$, namely
\EQ{
 \dR(\fy)<\ra \implies \fy\in X(\ra)\implies \dR(\fy)<C_*\ra.} 
\item \label{no-ret} No solution can return to it, namely for any $\vec u\in\Sol([0,T))$ 
\EQ{
 \vec u(0)\in X(\ra),\ \exists t_0\in(0,T),\ \vec u(t_0)\not\in X(\ra)
 \implies \forall t\in[t_0,T),\ \vec u(t)\not\in X(\ra).}
Moreover, such $u$ either blows up in finite time or scatters to $0$ in $t>t_0$.\end{enumerate}
\end{thm}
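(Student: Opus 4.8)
The plan is to build $X(\ra)$ as the forward–flow image of the open sublevel set $\{\dR<\ra\}$, truncated before the solution ejects; passing to the flow is precisely what repairs the failure of convexity of $\dR^{2}$ noted just above, since it discards the spurious re‑entries into $\{\dR<\ra\}$ that a scattering or blow‑up solution produces after its single excursion. Fix $C_{*},\ra_{*}$ as in Theorem~\ref{thm:OPR1}, shrinking $\ra_{*}$ if necessary. For a maximal solution $u\in\Sol(I)$ — every $\fy\in\cH$ lies on a unique one — I would set $X(\ra)$ to be the set of $\vec u(t)$ such that there is $s\in I\cap(-\I,t]$ with $\dR(\vec u(s))<\ra$ and $\dR(\vec u(\cdot))<C_{*}\ra$ on $[s,t]$. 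The $\T,\cS$‑invariance is inherited from $\dR$, monotonicity in $\ra$ is immediate, and openness follows from local well‑posedness together with the fact that the defining conditions are \emph{strict} and involve only the compact interval $[s,t]$: a small perturbation of $\vec u(t)$ keeps $\dR(\vec u(s))<\ra$ and $\dR(\vec u(\cdot))<C_{*}\ra$ on $[s,t]$. The inclusion $\{\dR<\ra\}\subset X(\ra)$ is built in by taking $s=t$.

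The heart of the matter is a ``no intermediate return'' statement: no solution can have $\dR(\vec u(a))<\ra$, $\dR(\vec u(b))\ge C_{*}\ra$ and $\dR(\vec u(c))<\ra$ with $a<b<c$. Granting this, the rest of properties (2)--(3) comes out. For $X(\ra)\subset\{\dR<C_{*}\ra\}$ one needs only that a witnessing pair $(s,t)$ forces, via the detaching Lemmas~\ref{lem:ud2dR}--\ref{lem:detach} and the one–pass theorem of \cite{CNW-nonrad}, the detached solution $u\dt$ to stay trapped, $d_{W}(\vec u\dt(\cdot))\lec\ra$ on $[s,t]$, whence $\dR(\vec u(t))\lec\ra+d_{W}(\vec u\dt(t))<C_{*}\ra$ by \eqref{dR ud}. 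For property (3): if $\vec u(0)\in X(\ra)$ and $\vec u(t_{0})\notin X(\ra)$, combining the two memberships produces $t'\in(0,t_{0}]$ with $\dR(\vec u(t'))\ge C_{*}\ra$ preceded by some $s\le0$ with $\dR(\vec u(s))<\ra$; the second part of Theorem~\ref{thm:OPR1}, applied from the base time $s$, then says $u\dt$ leaves the one–pass neighborhood of $\Static(W)$ and, by \cite{CNW-nonrad}, never returns, so $u\dt$ — hence $u$, via \eqref{v ux ud dR} — either blows up in finite time or scatters to $0$, which is the last clause of (3); and by the no‑return statement $\dR(\vec u(t))\ge C_{*}\ra$ for all $t\ge t_{0}$, so $\vec u(t)\notin X(\ra)$ there.

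To prove the no‑return statement, suppose $\dR(\vec u(a))<\ra$ and $\dR(\vec u(b))\ge C_{*}\ra>C_{*}\dR(\vec u(a))$. Theorem~\ref{thm:OPR1}(second part), read from the base time $a$, gives $t_{1}\ge b$ with $\dR(\vec u(t))\ge\ra_{*}/C_{*}$ for all $t\ge t_{1}$, and (from its proof) the detached $u\dt$ has already exited its one–pass neighborhood by time $b$, so $d_{W}(\vec u\dt)$ is bounded below on $[b,t_{1}]$ too, transferring to $\dR(\vec u(t))\gec\ra$ there. Moreover $u\dt$ either blows up away from the ground states, so $\dist_{L^{2^{*}}}(u(t),\Static(W)_{1})\gg\de_{m}$ near its blow‑up time by \eqref{far in L6} and $\dR\gec\dist_{L^{2^{*}}}(\cdot,\Static(W)_{1})$, or it scatters to $0$, in which case a dispersed configuration cannot be radiatively close to any $\pm\vec W_{\la,c}$: the $\cHR{B}$‑term of $\dR$ would force $\vec u(t)\approx\pm\vec W_{\la,c}$ on a set $B$ containing the bulk of $W_{\la,c}$, incompatible with dispersion — this uses $0\notin\pm\Static(W)$ and the reverse–Sobolev bound \eqref{rev Sob} — so $\dR(\vec u(t))$ stays above a fixed positive constant for large $t$. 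In every case $\dR(\vec u(c))$ is bounded below by a fixed constant $\gg C_{*}\ra_{*}\ge C_{*}\ra$, already for all $c>b$, contradicting $\dR(\vec u(c))<\ra$; this is where $\ra_{*}$ has to be taken small, e.g.\ $\ra_{*}\le\e_{*}/C_{*}^{2}$ with $\e_{*}$ the one–pass threshold of \cite{CNW-nonrad}.

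The main obstacle I expect is exactly the bookkeeping hidden in ``from its proof'': one must know that ejection, as detected through the non‑canonical detached solution $u\dt$ of Lemma~\ref{lem:detach}(III), is a genuine \emph{one–time} event whose occurrence by a given time is independent of the detaching data and of the base time at which the detaching is performed, is stable under perturbations of the initial data, and is monotone along the forward flow, and that the lower bounds on $d_{W}(\vec u\dt)$ in the post‑ejection regime transfer to lower bounds on $\dR(\vec u)$ uniformly in the radiation. Making all of this quantitative forces the hierarchy $\ra\le\ra_{*}\ll\e_{*}\ll\de_{*}\ll1$ and a careful matching of constants so that every error term coming from the detached radiation stays negligible against the one–pass thresholds.
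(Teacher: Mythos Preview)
Your construction works, but you are making it harder than necessary, and the paper's definition sidesteps exactly the obstacle you flag at the end.

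The no-intermediate-return statement you isolate is correct and is indeed the heart of your argument, but your proof of it has a genuine gap. You want a \emph{lower} bound on $\dR(\vec u)$ on $[b,t_1]$ from a lower bound on $d_W(\vec u\dt)$, but \eqref{dR ud} only gives the \emph{upper} bound $\dR(\vec u)\lec \ra+d_W(\vec u\dt)$; there is no transfer in the other direction, and this is precisely the non-canonicity of $u\dt$ you worry about in your last paragraph. The fix is one line: apply the \emph{first} part of Theorem~\ref{thm:OPR1} on the interval $[a,c]$. If $\dR(\vec u(a))<\ra$ and $\dR(\vec u(c))<\ra$ with $\ra\le\ra_*$, that part gives $\sup_{[a,c]}\dR(\vec u)\le C_*\max(\dR(\vec u(a)),\dR(\vec u(c)))<C_*\ra$, contradicting $\dR(\vec u(b))\ge C_*\ra$. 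No detaching, no post-ejection bookkeeping, no case analysis on blow-up versus scattering. (Also, your detour through $u\dt$ to verify $X(\ra)\subset\{\dR<C_*\ra\}$ is superfluous: your own definition already forces $\dR(\vec u(t))<C_*\ra$ at the right endpoint $t$.)

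The paper avoids even this much by a different definition. It declares $\fy\in X(\ra)$ iff the maximal solution through $\fy$ satisfies
\[
 \inf_{T_-<t\le 0}\dR(\vec u(t))<\ra \quad\text{and}\quad \inf_{0\le t<T_+}\dR(\vec u(t))<C_*\ra.
\]
The first infimum is non-increasing and the second non-decreasing along the forward flow, so exit from $X(\ra)$ can only occur when the future infimum reaches $C_*\ra$, after which it stays $\ge C_*\ra$ forever: the no-return property \eqref{no-ret} is then a tautology, with no auxiliary lemma. The inclusion $X(\ra)\subset\{\dR<C_*^2\ra\}$ comes from one application of Theorem~\ref{thm:OPR1}, and a final relabeling $\ra_*\mapsto\ra_*/C_*^2$, $C_*\mapsto C_*^2$ gives the stated form. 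Your $X(\ra)$ and the paper's need not coincide as sets, but both satisfy the theorem; the paper's definition trades a geometric picture (forward flow of a sublevel set) for a two-line proof of property~(3).
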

\begin{proof}
Let $\ra_*>0$ and $C_*>1$ be as in the previous theorem, though we will modify them at the end of proof. For $0<\ra \le \ra_*/C_*^2$, let $X(\ra)$ be the totality of the initial data $\vec u(0)$ of any solution $\vec u\in\Sol((T_-,T_+))$ with $T_-<0<T_+$ satisfying 
\EQ{
 \inf_{T_-<t\le 0}\dR(\vec u(t))<\ra, \pq \inf_{0\le t<T_+}\dR(\vec u(t))<C_*\ra.}
Since $\dR$ is continuous in $\cH$, the local wellposedness implies that $X(\ra)$ is open. Since the left quantity is non-increasing while the right quantity is non-decreasing along the flow, the no-return property \eqref{no-ret} is obvious. At the exit time $t_0$ we have 
\EQ{
 \dR(\vec u(t_0))=\inf_{t_0\le t<T_+}\dR(\vec u(t))=C_*\ra,}
and the previous theorem implies that such a solution $u$ either scatters to $0$ or blows up in $t>t_0$. It also implies that $\dR(\vec u(0))\le C_*^2\ra$. By definition we have $\dR\ge\ra$ on $X(\ra)^\cmpl$. 
Hence replacing $\ra_*$ with $\ra_*/C_*^2$ and then $C_*$ with $C_*^2$, we obtain the desired properties of $X(\ra)$. 
\end{proof}

\appendix
\section{Concentration blowup in the interior of blowup region} \label{sect:bup int}
Here we observe that type-II blow-up is {\it not always} on the dynamical boundary between the scattering to $0$ and blow-up. More precisely, we have
\begin{prop}
Let $0<T<\I$, $\vec u^0\in\Sol([0,T))\cap L^\I([0,T);\cH)$. Then for any $\de>0$, there is $\vec u^1\in\Sol([0,T))$ with the following property: $\vec u^1(t)-\vec u^0(t)$ has a strong limit as $t\nearrow T$, and for any $t\in[0,T)$ and any $\psi\in\cH$ with $\|\psi\|_\cH<\de$, the solution starting from $\vec u^1(t)+\psi$ blows up in positive finite time. 
\end{prop}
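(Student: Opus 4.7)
The plan is to add to $\vec u^0(0)$ a compactly supported ``bubble'' $\vec\Phi$ of very negative energy, placed in a ball that lies far outside the backward light cones of all the blowup points of $u^0$. Finite propagation will shield those singularities from $\Phi$, so that $u^1$ inherits $u^0$'s blowup at time $T$; meanwhile conservation of the (very negative) total energy will survive any $\cH$-perturbation of $\vec u^1(t)$ of size less than $\delta$, so the classical Levine--Kenig--Merle negative-energy blowup then applies to every such perturbation.

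Fix $\delta>0$, pick $\chi\in C_c^\I(B(0,1))$ with $\|\chi\|_{L^{2^*}}>0$, and for parameters $A,\lambda>0$ and $c_1\in\R^d$ to be chosen define
\[
 \Phi(x):=\bigl(A\lambda^{d/2-1}\chi(\lambda(x-c_1)),\,0\bigr),
\]
so that $\supp\Phi\subset B(c_1,1/\lambda)$, $\|\Phi\|_\cH=A\|\na\chi\|_{L^2}$, and $E(\vec\Phi)=\tfrac{A^2}{2}\|\na\chi\|_{L^2}^2-\tfrac{A^{2^*}}{2^*}\|\chi\|_{L^{2^*}}^{2^*}\sim -A^{2^*}$ for $A$ large (with both the $\cH$-norm and the energy independent of $\lambda$, by the explicit change of variables). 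Let $T_A$ be the finite blowup time of the pure solution with data $(A\chi,0)$ (which exists by \cite{KM} since its energy is very negative); by the equation's scaling the corresponding bubble solution $u_\Phi$ blows up at $T_A/\lambda$, so I pick $\lambda$ small enough that $T_A/\lambda>2T$. Since $u^0\in L^\I_t\cH$, its blowup set $S\subset\R^d$ is finite by \eqref{bup points}; I then pick $c_1$ with $\dist(c_1,S)>T+1/\lambda+1$, so that $B(c_1,1/\lambda)\cap\bigcup_{c_*\in S}B(c_*,T)=\emptyset$. Taking $|c_1|$ still larger (using dominated convergence applied to the fixed $\cH$-datum $\vec u^0(0)$) makes $\|\vec u^0(0)\|_{\cH(B(c_1,1/\lambda))}$ arbitrarily small, whence $E(\vec u^1(0))\le E(\vec u^0)+E(\vec\Phi)+o(1)\le -A^{2^*}/(2\cdot 2^*)$. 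Set $\vec u^1(0):=\vec u^0(0)+\vec\Phi$.

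By finite propagation $u^1\equiv u^0$ on $\bigcup_{c_*\in S}\{(t,x):|x-c_*|<T-t\}$, so $u^1$ inherits the singularity at $t=T$ and hence $T_+(u^1)\le T$. Writing $w:=u^1-u^0$, we have $\vec w(0)=\vec\Phi$ and $\square w=f'(u^0+w)-f'(u^0)$, and $\vec w(t)$ stays supported in the forward cone of $B(c_1,1/\lambda)$, a region where $u^0$ is smooth and $\cH$-bounded (being disjoint from the singular cones). A Strichartz perturbation comparison with $u_\Phi$, which lives on $[0,2T]$ by the choice of $\lambda$, then gives $w\in C([0,T];\cH)$ with uniform bound $\|w\|_{L^\I_t([0,T];\cH)}=:K_w$. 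Hence $u^1\in\Sol([0,T))$ with $T_+(u^1)=T$, and $\vec w(t)$ has a strong $\cH$-limit as $t\to T$ by continuity (note $\vec w(t)\equiv 0$ in a fixed neighborhood of each $c_*\in S$ for $t\in[0,T]$, so the limit is taken away from the singular points). This Strichartz perturbation step is the main technical obstacle: one must verify that the bounded but nonzero background $u^0$ cannot accelerate the bubble's intrinsic blowup past time $T$, which is arranged by choosing $\lambda$ small enough.

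For stability of the blowup, set $K:=\|u^0\|_{L^\I_t\cH}+K_w$, which uniformly bounds $\|\vec u^1(t)\|_\cH$ on $[0,T)$. Energy conservation gives $E(\vec u^1(t_0))\le -A^{2^*}/(2\cdot 2^*)$ for every $t_0\in[0,T)$, while for any $\psi\in\cH$
\[
 \bigl|E(\vec u^1(t_0)+\psi)-E(\vec u^1(t_0))\bigr|\lec(K+K^{2^*-1})\|\psi\|_\cH+\|\psi\|_\cH^{2^*}.
\]
The key observation is that $K$ grows only linearly in $A$ (the dominant contribution is $\|\Phi\|_\cH\sim A$, and $K_w\lec A$ by the scaling-uniform Strichartz estimate on $[0,T]$), whereas the negative bound on $E(\vec u^1(t_0))$ is of order $A^{2^*}$ which is superlinear; hence choosing $A$ sufficiently large depending on $\delta,T,u^0$ ensures $E(\vec u^1(t_0)+\psi)<0$ for all $\|\psi\|_\cH<\delta$. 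The Levine--Kenig--Merle negative-energy blowup theorem (\cite{Lev,KM}) then yields that the solution starting from $\vec u^1(t_0)+\psi$ blows up in positive finite time, completing the proof.
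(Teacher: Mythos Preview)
Your overall strategy matches the paper's: graft a far-away piece with very negative energy onto $u^0$, use finite propagation so the original blowup at $T$ is inherited, and invoke the Levine--Kenig--Merle criterion for all nearby data. The difference is in the implementation. The paper first truncates $u^0$ to a compactly supported solution $u^2$ (agreeing with $u^0$ inside a large ball and small outside), and then superposes a separate solution $u^3$ supported in a disjoint exterior shell. Because the supports of $u^2$ and $u^3$ remain disjoint on $[0,T)$, the sum $u^1=u^2+u^3$ is an \emph{exact} solution of (CW): there is no interaction term and hence no perturbation argument is needed at all. The required negativity of $E(\vec u^1(t)+\psi)$ then follows by bounding the interior energy $E_{|x|<5R}(\vec u^2(t)+\psi)$ once and for all and choosing $u^3$ so that its exterior energy beats that bound; the rescaling $\cS^\s$ with $\s\to-\infty$ delays $u^3$'s own blowup past $T$ while keeping its energy fixed.

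Your version adds the bubble $\Phi$ directly to $\vec u^0(0)$, so $u^0$ and the bubble interact, and you must run a genuine perturbation argument to show $w=u^1-u^0$ stays in $C([0,T];\cH)$ with $K_w\lec A$. This is where the write-up is too quick. The reference solution $u_\Phi$ has $\St_s$-norm of order $A$, so the long-time perturbation lemma requires the source error $f'(u^0+u_\Phi)-f'(u^0)-f'(u_\Phi)$ to be small relative to a threshold $\e(A)$ that shrinks as $A$ grows. You can indeed arrange this by pushing $|c_1|$ so far that $\|\vec u^0(0)\|_{\cHR{B(c_1,1/\la+2T)}}$ is tiny (then small-data theory in that interior cone makes $u^0$ tiny in Strichartz on the whole interaction region), but this dependence of $c_1$ on $A$ is not stated, and without it the claim ``$K_w\lec A$ by the scaling-uniform Strichartz estimate'' is unjustified. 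Once this is fixed, the order of choices becomes: fix $A$ large in terms of $\de$ and $\|u^0\|_{L^\I_t\cH}$; then choose $\la$ small enough that $\|v_A\|_{\St_m(0,\la T)}\ll\e_S$ (so in particular $\|\vec u_\Phi\|_{L^\I_t([0,T];\cH)}\sim A$); then choose $c_1$ far enough for the perturbation lemma at level $M\sim A$. The paper's disjoint-support trick bypasses all of this and makes the energy bookkeeping (splitting $E$ into $E_{|x|<5R}+E_{|x|>5R}$) completely transparent.
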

In other words, for any blow-up with bounded energy norm, there is another solution with the same blow-up profile, whose orbit is in the interior of the blow-up set of initial data, with arbitrarily large distance from the exterior. 
\begin{proof}
Fix $R \ge 1+T$ such that $\|\vec u^0(0)\|_{\cH(|x|>R)} \ll 1$. 
Let $u^2$ be the solution for the initial data $\vec u^2(0)=\Ga(x/R)\vec u^0(0)$, where $\Ga$ is a smooth radial function on $\R^d$ satisfying $\Ga(x)=1$ for $|x|\le 3$ and $\Ga(x)=0$ for $|x|\ge 4$. Then the finite speed of propagation implies that for $0<t<T$ and as long as $u^2$ exists, 
\begin{enumerate}
\item $\vec u^2(t)=\vec u^0(t)$ on $|x|<3R-t$, \label{u2 inner}
\item $\|\vec u^2(t)\|_{\cH(|x|>R+t)}\ll 1$, \label{u2 xsmall}
\item $\supp\vec u^2(t)\subset\{|x|<4R+t\}$.
\end{enumerate}
Since the regions for \eqref{u2 inner} and for \eqref{u2 xsmall} cover $[0,T)\times\R^d$, we deduce that $u^2$ extends beyond $t<T$. Moreover, both $u^0$ and $u^2$ extend to $|x|>R+t$ for all $t>0$ by the smallness in the exterior cone. Hence $\vec u^2(t)-\vec u^0(t)$ has a strong limit in $\cH$ as $t\nearrow T$. 

Now fix $\de>0$. Since $u^2$ is bounded in $\cH$ for $0<t<T$, 
\EQ{
 M:=\sup\{E_{|x|<5R}(\vec u^2(t)+\psi)\mid t\in[0,T),\ \|\psi\|_\cH<\de\}}
is finite. Then we can find a strong radial solution $u^3$ such that 
\begin{enumerate}
\item $\supp\vec u^3(t)\subset\{|x|>6R-t\}$. \label{u3 supp}
\item $\sup\{E_{|x|>5R}(\vec u^3(t)+\psi)\mid t\in[0,T),\ \|\psi\|_\cH<\de\}<-M-1$. \label{u3 negE}
\end{enumerate}
Indeed, it is easy to satisfy \eqref{u3 supp} and \eqref{u3 negE} at $t=0$ by using a very flat radial smooth function, since for any $\fy,\psi\in\cH$ and any $0<\e\ll 1$, 
\EQ{
 E_{|x|>5R}(\fy+\psi) \le (1+\e)E_{|x|>5R}(\fy)+C_\e(\|\psi\|_\cH^2+\|\psi_1\|_{2^*}^{2^*}).}
\eqref{u3 supp} is preserved for $t>0$ by the finite speed of propagation. For such initial data, the solution may blow up in finite time, but we can delay the blow-up time as much as we like by the rescaling $\vec S^\s$ with $\s\to-\I$, which makes both \eqref{u3 supp} and \eqref{u3 negE} easier. This yields $u^3\in \Sol([0,2T])$ with the above properties. 

Now let $u^1$ be the strong solution for the initial data 
\EQ{
 \vec u^1(0)=\vec u^2(0)+\vec u^3(0).}
Then the finite propagation property together with the disjoint supports of $u^2$ and $u^3$ implies that $\vec u^1=\vec u^2$ for $|x|<6R-t$, $\vec u^1=\vec u^3$ for $|x|>4R+t$, so $\vec u^1=\vec u^2+\vec u^3$ for $0<t<T$, and $\vec u^1(t)-\vec u^0(t)$ has a strong limit in $\cH$ as $t\nearrow T$. Moreover, for any $t\in[0,T)$ and any $\psi\in\cH$ satisfying $\|\psi\|<\de$ we have 
\EQ{
 E(\vec u^1(t)+\psi)
 =E_{|x|<5R}(\vec u^2(t)+\psi) + E_{|x|>5R}(\vec u^3(t)+\psi)<-1,}
hence the solution starting from $\vec u^1(t)+\psi$ has to blow up in finite time because of the negative energy, see \cite{Lev,KM}. 
\end{proof}

\section{Table of Notation}
{\small
\begin{longtable}{l|l|l}
 \hline 
 $\diff{X^\pa}$ & $=X^1-X^0$ & \eqref{def diff} \\
 $\vec u$ & $=(u,\dot u)$ vector in the phase space & \eqref{def H} \\
 $\fy^\da$ & $=(\fy_1,-\fy_2)$ time inversion & \eqref{def da} \\
 $\LR{\cdot|\cdot}$ & $L^2$ inner product & \eqref{def L2prod} \\ 
 $\sB_d$ & Borel sets in $\R^d$ & \\
\hline
 (CW) & the critical wave equation & \eqref{eqCW} \\ 
 $\cH$, $\cH_\perp$  & energy space, its subspace & \eqref{def H},\eqref{def Horth}\\
 $\Sol(I)$ & solutions of (CW) on $I$ & \eqref{def Sol} \\ 
 $E(\vec u),P(\vec u)$ & total energy and momentum & \eqref{def En},\eqref{def P} \\
 $E_B(\fy),K_B(\fy)$ & restricted energy functionals & \eqref{def EB} \\ 
 $U(t)$ & free propagator & \eqref{def U} \\ 
 $\T^c,\cS^\s,S_a^\s$ & invariant translation and scaling & \eqref{def TS} \\ 
 $\St,\St_*^*,q_s,q_m$ & Strichartz norms and exponents & \eqref{def St} \\
\hline
 $W$, $\Static(W)$ & ground states & \eqref{def W},\eqref{def Static}\\ 
 $\Soliton(W)$ & ground solitons & \eqref{def Soliton} \\
 $\dist_W$, $d_W$ & distances to the ground states & \eqref{def distW},\eqref{def dW} \\ 
 $L_+$, $\cL$ & linearized operators around $W$ & \eqref{def L+},\eqref{def cL} \\ 
 $N(v),\U{N}(v)$ & higher order terms & \eqref{def N},\eqref{eq vta} \\ 
 $\ro$, $k$, $P_\perp$ & ground state of $L_+$ & \eqref{def ro}\\ 
 $g_\pm$, $\La_\pm$ & (un)stable modes of $J\cL$ & \eqref{def la+-},\eqref{def La+-} \\ 
\hline
 $v,\la,\ga,\la_\pm$ & components of $u$ around $W$ & \eqref{decop u},\eqref{def la+-} \\
 $(\al,\mu)$ & parameters to define the orthogonality & \eqref{orth} \\
 $(\ti\s,\ti c)$, $\sT_\fy$, $\ti\la$  & local coordinates by the orthogonality & \eqref{def sT}, Lemma \ref{lem:orth} \\
 $Z=(Z_1,Z_2)$ & modulation operator in the equation & \eqref{eq vta} \\ 
 $\ta$ & rescaled time variable & \eqref{def ta} \\ 
 $\|\fy\|_E$, $\nu(\ta)$ & linearized energy norms & \eqref{def E},\eqref{def nu} \\
 $\B_\de$, $\B_\de^+$, $\B_\de'$ & small balls for different components & \eqref{def BN}, \eqref{def B+'} \\ 
 $\cN_\de$, $\cN_{\de_1,\de_2}$ & neighborhoods of $\Static(W)$ & \eqref{def BN},\eqref{def N2} \\ 
 $\Phi_{\s,c}$, $\Psi_{\s,c}$ & local coordinates around $\Static(W)$ & \eqref{def PhiPsi} \\ 
 $\M_0\sim\M_5$ & local manifold and its extensions & \eqref{def M0},\eqref{def M1},\eqref{def M2},\eqref{def M3} \\ 
 $m_+,M_+$ & functions defining the local manifold & Theorem \ref{thm:cmd0},\eqref{def M+} \\
 \hline 
 $a_W,b_W$ & positive constants & \eqref{der s almu},\eqref{der y almu} \\ 
 $\e_S$ & small Strichartz norm for scattering & \eqref{def eS} \\
 $\de_\Phi$, $\de_m$ & small distances from $\Static(W)$ & Lemma \ref{lem:orth}, Theorem \ref{thm:cmd0} \\
 $\ig_I$ & smallness in the ignition lemma & Lemma \ref{lem:inst} \\

 $\y_l$ & $\ta$-length for uniform Strichartz bound & Lemma \ref{lem:LUS} \\ 
 $\ra_m,\ra_*$ & smallness in radiative distance & \eqref{def M3}, Theorems \ref{thm:OPR1},\ref{thm:OPR2} \\
 $\e_*,\de_*$ & smallness in the one-pass theorem & \cite[Theorem 5.1]{CNW-nonrad} \\
 $\ka(\de)$ & variational bound on $K$ & \cite[Lemma 4.1]{CNW-nonrad} \\ 

 \hline 
 $B_{+a},B_{-a}$ & fattened and thinned sets by radius $a$ & \eqref{def B+-},\eqref{def B+-I} \\
 $\tHR{B},\ti\cH(B)$ & restrictions of $\cH$ to $B$ & \eqref{def cHR},\eqref{def ticH} \\
 $X_B$ & extension operator from $B$ to $\R^d$ & \eqref{def XB} \\
 $\dom(u)$, $t_\pm(\fy,x)$ & maximal space-time domain of solution & \eqref{def D} \\
 \hline 
 $\Ra_B^T$ & seminorm measuring radiation & \eqref{def Ra} \\
 $u\dt$,$u\xt$ & detached interior and exterior solutions & Lemma \ref{lem:detach} \\ 
 $\dR(\fy)$ & radiative distance to the ground states & \eqref{def dRa} \\
 \hline
\end{longtable}
}

\end{document}